\documentclass{article}

\usepackage{times}
\usepackage{graphicx}
\usepackage{subfigure}

\usepackage{natbib,cancel,mdframed}

\usepackage{amsmath,amsthm,amssymb,color,wasysym}
\newcommand{\E}{\mathbb{E}}
\newcommand{\R}{\mathbb{R}}
\newcommand{\GMMD}{\mathrm{gMMD}}
\newcommand{\MMD}{\mathrm{MMD}}
\newcommand{\ED}{\mathrm{ED}}
\newcommand{\EED}{\mathrm{eED}}

\newtheorem{lemma}{Lemma}
\newtheorem{proposition}{Proposition}
\newtheorem{theorem}{Theorem}

\newtheorem{remark}{Remark}

\usepackage{hyperref}
\usepackage[margin=1.25in]{geometry}

\usepackage{phaistos}

\allowdisplaybreaks

\begin{document}
\title{ Adaptivity and Computation-Statistics Tradeoffs for Kernel and Distance based High Dimensional Two Sample Testing }

\author{\\
Aaditya Ramdas$^{12}$\\
\texttt{aramdas@cs.cmu.edu}\\
\hspace{4in} 
\and \\
Sashank J. Reddi$^{2}$\\
\texttt{sjakkamr@cs.cmu.edu}\\
\and \\
Barnab\'{a}s P\'{o}czos$^2$\\
\texttt{bapoczos@cs.cmu.edu} \\
\hspace{2in} 
\and \\
Aarti Singh$^2$ \\
\texttt{aarti@cs.cmu.edu} \\
\and \\
Larry Wasserman$^{12}$ \\
\texttt{larry@stat.cmu.edu} \\
\and \\ 
Department of Statistics$^1$ and Machine Learning Department$^2$\\
Carnegie Mellon University\\ 
}

\maketitle

\begin{abstract}
Nonparametric two sample  testing is a decision theoretic problem that involves identifying differences between two random variables without making parametric assumptions about their underlying distributions. We refer to the most common  settings as  mean difference alternatives (MDA), for testing differences only in first moments, and  general difference alternatives (GDA), which is about testing for any difference in distributions. A large number of test statistics have been proposed for both these settings.
This paper connects three classes of statistics - high dimensional variants of Hotelling's t-test, statistics based on Reproducing Kernel Hilbert Spaces, and energy statistics based on pairwise distances. 
We ask the following question - \textit{how much statistical power do popular kernel and distance based  tests for GDA have when the unknown distributions  differ in their means, compared to specialized tests for MDA?}

To answer this, we formally characterize the power of popular tests for GDA like the Maximum Mean Discrepancy with the Gaussian kernel ($\GMMD$) and bandwidth-dependent variants of the Energy Distance with the Euclidean norm ($\EED$) in the high-dimensional MDA regime. We prove several interesting properties relating these classes of tests under MDA, which include  
\begin{enumerate}
\item[(a)] $\EED$ and gMMD have asymptotically equal power; furthermore they also enjoy a free lunch because, while they are additionally consistent for GDA, they have the same power as specialized high-dimensional t-tests for MDA. All these tests are asymptotically optimal (including matching constants) for MDA under spherical covariances, according to simple lower bounds.
\item[(b) ] The power of $\GMMD$ is independent of the kernel bandwidth, as long as it is larger than the choice made by the median heuristic.
\item[(c)]  There is a clear and smooth computation-statistics tradeoff for linear-time, subquadratic-time and quadratic-time versions of these tests, with more computation resulting in higher power.
\end{enumerate}  All three observations are practically important, since point (a) implies that $\EED$ and $\GMMD$ while being consistent against all alternatives, are also automatically adaptive to simpler alternatives, point (b) suggests that the median ``heuristic'' has some theoretical justification for being a default bandwidth choice, and point (c) implies that expending more computation may yield direct statistical benefit by orders of magnitude. 
\end{abstract}


\section{Introduction}

Nonparametric two sample testing (or homogeneity testing) deals with detecting differences between two distributions, given samples from both, without making any parametric distributional assumptions. More formally, given samples $X_1,...,X_n \sim P$ and $Y_1,...,Y_m \sim Q$, where $P$ and $Q$ are distributions in $\mathbb{R}^d$, the most common types of two sample tests involve testing for the following sets of null and alternate hypotheses
\begin{align*}
\text{General difference alternatives \textbf{(GDA)} : ~}~ &H_0: P = Q &~\mbox{~ vs ~}~& H_1: P\neq Q,\\
\text{Mean difference alternatives \textbf{(MDA)} : ~}~ &H_0: \mu_P = \mu_Q &~\mbox{~ vs ~}~& H_1: \mu_P \neq \mu_Q
\end{align*}
where $\mu_P := \E_P X, \mu_Q := \E_Q Y$. This problem has a sustained interest in both the statistics and machine learning literature, due to applications where the sample size might be limited compared to dimensionality, due to experimental or computational costs. For example, it can be used to answer questions in medicine (\textit{is there a difference between pill and placebo?})  and neuroscience (\textit{does a particular brain region respond differently to two different kinds of stimuli?}). 

 We will assume $m=n$ for simplicity, though our results may be extended to the case when $m/(n+m)$ converges to any constant $k \in (0,1)$. A test  $\eta $ is a function from $X_1,...X_n,Y_1,...,Y_n$ to $\{0,1\}$, where we reject $H_0$ when $\eta=1$. We will only consider tests  that have an asymptotic type-I error of at most $\alpha$. Let us call the set of all such tests as
\begin{equation}\label{eq:tests}
[\eta]_{n,d,\alpha} := \{\eta : \R^{n\times d} \times \R^{n\times d} \to \{0,1\}, \E_{H_0} \eta \leq \alpha + o(1)\}.
\end{equation}
In the Neyman-Pearson paradigm for the fixed $d$ setting, a test is judged by its power  $\phi = \phi(n,P,Q,\alpha) = \E_{H_1} \eta$, and we say that such a test $\eta \in [\eta]_{n,d,\alpha}$ is consistent in the fixed $d$ setting when
$$
\E_{H_1} \eta \rightarrow 1, \E_{H_0} \eta \leq \alpha \mbox{ as } n \rightarrow \infty \text{ for any fixed $\alpha > 0$}.
$$
In contrast, we say that a test $\eta \in [\eta]_{n,d,\alpha}$ is consistent in the high-dimensional setting when its power $\phi = \phi(n,d_n,P_n,Q_n,\alpha) = \E_{H_1} \eta$ satisfies
$$
\E_{H_1}\eta \rightarrow 1, \E_{H_0} \leq \alpha \mbox{ as } (n,d) \rightarrow \infty, \text{ for any fixed $\alpha > 0$}
$$
where one also needs to specify the relative rate at which $n,d$ can increase.
The central question being considered in this paper is \textit{``what is the power of tests designed for GDA, compared to those designed for MDA, when the distributions truly differ in their means?''}. We will explain this and other related questions in more detail in Section \ref{sec:open}.

\begin{remark}
The tests considered in this paper have some common properties. All the test statistics $T$ are centered under the null, i.e. $\E_{H_0} T = 0$, dividing the statistic by $\sqrt{var(T)}$ leads to an asymptotically  standard normal statistic  under the null, i.e. $T/\sqrt{var(T)}   \rightsquigarrow N(0,1)$ under $H_0$, where $\rightsquigarrow $ represents convergence in distribution as $n \to \infty$, and hence all tests are of the form:
$$
\eta(X_1,...,X_n, Y_1,...,Y_n) = \mathbb{I}\left( \frac{T}{\sqrt{var(T)}} > z_\alpha \right)
$$
where $z_\alpha$ is the $1-\alpha$ quantile of the standard normal distribution.
\end{remark}

Two-sample testing is a fundamental decision-theoretic problem, having a long history in statistics - for example, the past century has seen a wide adoption of the t-statistic by \citet{hotelling} to decide if two samples have different population means (MDA). It was introduced in the parametric setting for univariate Gaussians,  but it has been generalized to multivariate non-Gaussian settings as well. If $\bar X,\bar Y$ are the sample means, and $S$ is a joint sample covariance matrix, then a statistician using the multivariate $t$-test calculates
$$
T_H := (\bar X - \bar Y)^T S^{-1} (\bar X - \bar Y)
$$
and the test is $\mathbb{I}(T_H/\sqrt{Var(T_H)} > t_\alpha)$ where $t_\alpha$ is chosen so that $\E_{H_0} \eta \leq \alpha + o(1)$).  $T_H$ is consistent for MDA whenever $P,Q$ have different means, and further, it is known to be the ``uniformly most powerful'' test when $P,Q$ are univariate Gaussians under fairly general assumptions \citep{kariya81,simaika41,anderson58,salaevskii71}.

 In a seminal paper by \cite{bs}, the authors proved that $T_H$ has asymptotic power tending to $\alpha$ in this high-dimensional setting (as discussed in the next section), motivating the study of alternative test statistics.  Despite their increasing popularity and usage,
  many interesting questions remain unanswered, as will be discussed in Section \ref{sec:open} and partially answered in this paper. 
  This paper deals with (moderately) high-dimensional and nonparametric two-sample testing, where $d$ can grow polynomially with $n$, and there are no explicit parametric assumptions on $P,Q$. In Section \ref{sec:expt}, we experimentally validate our claims for a variety of distributions, even at quite small sample sizes and dimensions. This shows that the asymptotics accurately describe even finite sample behavior of these tests.

  \paragraph{Paper Outline.} The rest of this paper is organized as follows. In Section \ref{sec:tests}, we introduce three classes of tests in the literature - Hotelling-based tests for MDA, and kernel-based and distance-based tests for GDA, and we discuss related open questions in Section \ref{sec:open}. In Section \ref{sec:main}, we prove that three of the most popular tests (one from each class) have the same asymptotic power for MDA, showing the free adaptivity of GDA-based tests for the simpler MDA problem. In Section \ref{sec:lower}, we show that all these classes of tests are optimal for MDA under the diagonal covariance setting, by adapting a lower bound from the normal means problem.  Section \ref{sec:compstat} discusses computation-statistics tradeoffs, where we compare the power of linear-time, sub-quadratic time and quadratic-time versions of these tests. In Section \ref{sec:expt}, we run experiments and discuss some practical implications of this work. We  end with the proofs in Section \ref{sec:proofs}. 

\paragraph{Notation} We use the standard $o,o_P,O_P$ notation extensively. Also, for two non-random sequences $A_n,B_n$, $A_n = \Omega(B_n)$ is the negation of $A_n = o(B_n)$,  $A_n = \omega(B_n)$ is the negation of $A_n = O(B_n)$,  and $A_n \asymp B_n$ to mean  $A_n = B_n(c+o(1))$ for some absolute constant $c$. $Tr()$ is the trace of a (square) matrix and $Tr^k()$ is the $k$-th power of the trace. $\circ$ is the elementwise or Hadamard product, $Ts()$ refers to the total sum of all the elements of a matrix, $e_i$ is the $i$-th standard basis vector, $1$ is the vector of ones. $\rightsquigarrow$ is convergence in distribution, and $\mathbb{I}(\cdot)$ is a 0-1 indicator function.

\section{Hotelling-based MDA Tests and Kernel/Distance-based GDA tests}\label{sec:tests}

\textbf{Tests for MDA}. As mentioned in the introduction, \cite{bs} prove that Hotelling's $T_H$ has power tending to $\alpha$ (this is called trivial power),  when $(n,d)\rightarrow \infty$ with $d/n \rightarrow 1-\epsilon$ for small $\epsilon$, explained by the inherent difficulty of accurately estimating the $O(d^2)$ parameters of $\Sigma^{-1}$ with very few samples ($S^{-1}$ is not even defined if $d>n$ and is badly conditioned if $d$ is of similar order as $n$). To avoid this problem, they  proposed to use the test statistic
$$
T_{BS} := \|\bar X - \bar Y\|^2 - \mathrm{tr}(S)/n
$$
and showed that it has non-trivial power whenever $d/n \rightarrow c \in (0,\infty)$. An important precursor to this nonparametric work of \cite{bs} is that of \cite{dempster58} who proposed a high-dimensional t-test for Gaussians.
\cite{sd} and \cite{skk13} proposed to instead use $\mathrm{diag}(S)^{-1}$ instead of $S^{-1}$, in $T_H$,
and showed its advantages in certain settings over $T_{BS}$ (specifically its scale invariance, i.e. invariance when the data is  rescaled by a diagonal matrix, gives it an advantage when the covariance matrices are diagonal but non-spherical).

In another extension of $T_{BS}$ by \cite{cq}, henceforth called CQ, the authors proposed a  variant of $T_{BS}$ of the form
\begin{eqnarray*}
T_{CQ} &:=& \frac1{n(n-1)}\sum_{i\neq j=1}^n X_i^T X_j
+ \frac1{n(n-1)}\sum_{i\neq j=1}^n Y_i^T Y_j
 -  \frac{2}{n^2}\sum_{i,j=1}^n X_i^T Y_j,
\end{eqnarray*}
analyzing its power for MDA when the covariances of $X,Y$ are also unequal and without explicit restrictions on $d,n$, but rather in terms of conditions stated in terms of $n,\Sigma$ and mean difference $\delta := \mu_P - \mu_Q$. We will return to these conditions later in this paper, since we will use assumptions of similar flavor.

Note that $\E[T_{CQ}] = \mu_P^T \mu_P + \mu_Q^T \mu_Q - 2 \mu_P^T \mu_Q = \|\mu_P - \mu_Q\|^2$, and hence $T_{CQ}$ is an unbiased estimator of $\|\mu_P - \mu_Q\|^2$. In this paper, instead of using $T_{CQ}$ directly, we will analyze a minor variant, which is a U-statistic:
\begin{eqnarray}
U_{CQ} &:=& \frac1{n(n-1)} \sum_{i \neq j = 1}^n h_{CQ}(X_i,X_j,Y_i,Y_j) \nonumber\\
\text{where } h_{CQ}(X,X',Y,Y') &:=& X^T X + Y^T Y - X^T Y' - X'^T Y. \label{eq:hcq}
\end{eqnarray}
$T_{CQ}$'s difference from $U_{CQ}$ is only in the third term, and this difference is asymptotically vanishing, making the asymptotic properties of $U_{CQ}$ (especially its power) identical to $T_{CQ}$, and its usage is only for technical convenience.

There is also a large literature on the so-called parametric Behrens-Fisher problem, which is a parametric MDA problem where the distributions are Gaussian and heteroskedastic, and also the nonparametric Behrens-Fisher problem that deals with MDA when $P,Q$ are nonparametric mean-scale families, in the univariate and multivariate settings. See \cite{belloni} and \cite{lopes11} for recent such works, and references therein. Another related line of work analyzes the setting where $p$ could be exponentially larger than $n$ but assuming some kind of sparsity (say in the mean difference); see \cite{cai14} for such an example.


\textbf{Tests for GDA}. It is well known that the Kolmogorov-Smirnov (KS) test by \cite{kolmogorov33} and \cite{smirnov48} involves differences in empirical CDFs. The KS test, the related Cramer von-Mises criterion by \cite{cramer28} and \cite{vonmises28}, and Anderson-Darling test by \cite{anderson1952asymptotic} are very popular in one dimension, but their usage has been  more restricted in higher dimensions. This is mostly due to the curse of dimensionality involved with estimating multivariate empirical CDFs. While there has been  work on generalizing these popular one-dimensional to higher dimensions, like \cite{bickel69}, these are seemingly not the most common multivariate tests. Some other examples of univariate tests include rank based tests as covered by the book \cite{lehmann06ranks} and the runs test by \cite{waldwolfowitz40}, while some interesting multivariate tests include spanning tree methods by \cite{friedmanrafsky79}, nearest-neighbor based tests by \cite{schilling86} and \cite{henze88}, and the ``cross-match'' tests by \cite{rosenbaum}. Most of these have been proved to be consistent in the fixed $d$ setting, but not much is known about their power in the high-dimensional setting.

One popular class of tests for the multivariate GDA problem that has emerged over the last decade, are kernel-based tests introduced in parallel by \cite{alba08} and \cite{mmd06}, and expanded on in \cite{mmd}. The \textit{Maximum Mean Discrepancy} between $P,Q$ is defined as
$$
\MMD(H_\kappa,P,Q) := \max_{\|f\|_{H_\kappa} \leq 1} \E_P f(x) - \E_Q f(y)
$$
where $H_\kappa$ is a Reproducing Kernel Hilbert Space associated with Mercer kernel $k(\cdot,\cdot)$, and $\{f: \|f\|_{H_\kappa} \leq 1\}$ is its unit norm ball. It is easy to see that $\MMD \geq 0$, and also that $P=Q$ implies $\MMD=0$. For the converse, \cite{mmd06} show that under fairly general conditions involving $H_\kappa$ or equivalently $\kappa$, the equality holds iff $P=Q$. The authors prove that
$$
\MMD(H_\kappa, P,Q) = \|\E_P \kappa(x,.) - \E_Q \kappa(y,.)\|_{H_\kappa}.
$$
This gives rise to a natural associated test, that involves thresholding the following U-statistic, an unbiased estimator of $\MMD^2$:
\begin{eqnarray}
\MMD^2_u(k(\cdot , \cdot)) &:=& \frac1{n(n-1)}\sum_{i\neq j}^n h_\kappa(X_i,X_j,Y_i,Y_j)\nonumber\\
\text{where } h_\kappa(X,X',Y,Y') &:=& \kappa(X,X') + \kappa(Y,Y') - \kappa(X,Y') -\kappa(X',Y). \label{eq:hk}
\end{eqnarray}
Note once again that we can form a $\GMMD$ statistic having 3 summations like $T_{CQ}$, but for technical convenience we mimic the form of the U-statistic $U_{CQ}$, the asymptotic properties of both being the same. Note that $U_{CQ}$ is just the $\MMD$ when we use the linear kernel $k(a,b)=a^T b$.
 The most popular kernel for GDA is the Gaussian kernel with bandwidth parameter $\gamma$, leading to the test statistic that we henceforth call $\GMMD$:
\begin{eqnarray*}
\GMMD^2_\gamma &:=& \MMD^2_u(g_\gamma(\cdot,\cdot))\\
\mbox{where ~}~ g_\gamma(a,b) &:=& \exp\left(-\frac{\|a - b \|_2^2}{\gamma^2} \right).
\end{eqnarray*}
Apart from the fact that the population $\GMMD^2(P,Q) = 0$ iff $P=Q$  the other fact that makes this a useful test statistic is that its estimation error, i.e. the error of $\MMD^2_u$ in estimating $\MMD^2$, scales like $1/\sqrt n$, independent of $d$; see \cite{mmd} for a detailed proof of this fact. This is unlike the KL divergence, for example, which is $0$ iff $P=Q$ but is hard to estimate in high-dimensions. However, it was recently argued in \cite{powermmd} that the study of estimation error covers only one side of the story, and that test power still degrades with $d$ even if estimation error does not.

A related but different class of tests are distance-based ``energy statistics'' as introduced in parallel by \cite{baringhausfranz04} and  \cite{energydistance}, and generalized to some kinds of metrics, denoted $\rho$, for a related independence testing problem, by \cite{lyons}. The test statistic is called the \textit{Cramer statistic} by the former paper but we use the term \textit{Energy Distance} as done by the latter, and once more, we study the U-statistic form:
\begin{eqnarray}
\ED_u(\rho(\cdot,\cdot)) &:=&   
\frac1{n(n-1)}\sum_{i\neq j}^n h_\rho(X_i,X_j,Y_i,Y_j)\nonumber \\
\text{where } h_\rho(X,X',Y,Y') &:=& \rho(X,Y') + \rho(X',Y) - \rho(X,X') -\rho(Y,Y'). \label{eq:hd}
\end{eqnarray}
The most popular or ``default'' choice within this class (the only one studied by both sets of authors who introduced it) is the Energy Distance with the Euclidean distance, henceforth called $\EED$, defined as
\begin{eqnarray*}
\EED_u &:=& \ED_u(e(\cdot,\cdot))\\
\mbox{where ~}~ e(a,b) &:=& \|a - b \|_2.
\end{eqnarray*}

Appropriately thresholding $\GMMD^2_u$ and $\EED_u$ leads to tests that are consistent for GDA in the fixed $d$ setting against all fixed alternatives where $P \neq Q$  (and some local alternatives, i.e. alternatives that change with $n$) under fairly general conditions and such results can be found in the associated references. However not much is known about them in the high dimensional regime.

\begin{remark}
This paper will deal largely with $\GMMD$ and $\EED$, because these are the most popular choices for kernel and distance used in practice, but similar inferences can possibly be made about other kernels and distances, using the same proof technique. Similarly, we will focus on $U_{CQ}$, though one may draw similar inferences about $T_{BS}$ and $T_{SD}$ and their corresponding GDA variants.
\end{remark}


\section{Open Questions and Summary of Results}\label{sec:open}

The test statistics  for MDA, like $U_{CQ}, T_{BS}, T_{SD}, T_{H}$ have all been analysed in the high-dimensional setting. However, there is presently poor understanding of $\GMMD$ and $\EED$ in high dimensions. Below we list some of these open questions (along with explanations) that we are going to answer in this paper, followed by our partial answers to these questions. 

\paragraph{Q1.} How can one characterize the power of nonparametric tests like $\GMMD$ and/or $\EED$ in high dimensions, either for GDA or MDA?\\

\textit{Explanation [Q1].} In the fixed $d$ setting, $\GMMD$ and $\EED$ are well understood, and their null and alternate distributions  are given in \cite{mmd} and \cite{energydistance} respectively. However, their behavior in high dimensions seems to be essentially unanswered in the current literature. A general characterization of power is impossible since $P,Q$ could be different yet arbitrarily similar to each other (see Section 3.2 of \cite{mmd} for a formal statement and proof of this claim). Due to this reason, one is somewhat restricted to trying to characterize the power in limited settings. For example, one can hope to characterize the power by parameterizing the problem in terms of the smallest moment in which $P,Q$ differ.  

\textit{Result [Q1].} One way that we propose to analyze them is to consider two nonparametric distributions $P,Q$ that \textit{only} differ in one specific moment and see how much power $\GMMD$ or $\EED$ have to identify this difference and reject the null. As a first step, this paper will characterize their power for MDA, when $P,Q$ differ only in their first moment. 

\paragraph{Q2.} How does the choice of bandwidth parameter $\gamma$ affect power of $\GMMD_u^2$, for GDA or MDA?\\

\textit{Explanation [Q2].} The most popular choice of bandwidth is the ``median heuristic'' where it is chosen as the median Euclidean distance between all pairs of points (see \cite{learningkernels}). However, the effect of this choice on test power is unclear. \cite{optimalkernel} also make suggestions for choosing the bandwidth parameter, but only for the linear-time $\GMMD^2_l$ (see Section \ref{sec:compstat}), and also with guarantees only in the fixed $d$ setting. Hence the study of how the kernel bandwidth affects power is a work in progress in the current literature. For any fixed $\gamma$, consistency for GDA was proved in \cite{mmd06}; further, the power of $\GMMD^2_u$ against any fixed GDA alternative was also explicitly derived in the fixed $d$ setting to be $\Phi(\sqrt n)$, ignoring constants, where $\Phi$ is the Gaussian CDF. Notice that consistency of the $\GMMD$ test for any \textit{fixed} $\gamma$ is in stark contrast to using Gaussian kernels for density estimation, where we must let the bandwidth go to zero with increasing $n$, and hence the $\GMMD$ statistic does not behave in the same way as the L2-distance between kernel density estimates, as done in \cite{andersonhall94}.

\textit{Result [Q2].} In Section \ref{sec:main}, we prove that the power of $\GMMD_u^2$ does not depend on the bandwidth parameter $\gamma$, as long as $\gamma$ is chosen to be asymptotically larger than the choice made by the aforementioned median heuristic.

\paragraph{Q3.} Can one directly compare the power of $\EED$ and $\GMMD$ for GDA or MDA? Is one of them more powerful than the other? \\

\textit{Explanation [Q3].}  \cite{hsiceqdcov} describes connections between kernel and distance based tests for independence testing. Informally speaking, there is a near one-to-one correspondence between the class of kernels and distances for which such tests make sense. However, while there is \textit{some} metric/semimetric that corresponding to Gaussian kernel $g$, that metric/semimetric is not the Euclidean distance $e$ (and vice versa).  $\EED$ seems to be more popular in the statistics literature, and $\GMMD$ in machine learning - it is of practical importance to both fields to know how one should choose between $\EED$ and $\GMMD$. 

\textit{Result [Q3].} In Section \ref{sec:main}, we show that (under fairly general conditions) $\GMMD$ and $\EED$ have asymptotically equal power for MDA, both in theory and practice.

\paragraph{Q4.} How do the powers of tests for GDA compare to tests for MDA, when (unknown to us) $P,Q$ actually differ in only their means?\\

\textit{Explanation [Q4].}  Given a nonparametric two-sample testing problem, one generally does not know if the distributions differed in their means or not. If they did differ in their means, presumably the former statistics may perform worse than the latter, since the latter are designed specifically for that purpose, and can concentrate all their power in detecting first moment differences. But how much worse? What is the price one must pay for the extra generality of $\GMMD$ and $\EED$? One of the main questions considered in this paper is actually one of comparing the powers of $\EED,\GMMD$ and $U_{CQ}$. 

\textit{Result [Q4].} In Section \ref{sec:main}, we prove that one does not pay any price for the generality of $\GMMD_u^2, \EED_u$ (they enjoy a ``free lunch'') - 
$\GMMD^2_u$ and $\EED_u$ have the same power as $U_{CQ}$ against MDA in high dimensions, both in theory and practice, even though $\GMMD^2_u$ and $\EED_u$ are also consistent against GDA whereas $U_{CQ}$ is not. We would like to note that this result has actually been observed in practice, but seemingly not been explicitly acknowledged or conjectured. Figures 1 and 4 of \cite{baringhausfranz04}  are quite convincing for $\EED$, and the authors explicitly point this out in their experiments and conclusion sections, while Figures 3 and 4 of \cite{lopes11} also show same phenomenon for $\GMMD$, though the latter authors do not comment on their experimental observation. As far as we know, this paper has the first rigorous justification of such a phenomenon.

\paragraph{Q5.} How does computation affect power in high dimensions?\\ 

\textit{Explanation [Q5].} A final question we consider is the relationship between computation and power. Noting that $\GMMD_u^2$ takes quadratic time i.e. $O(n^2)$ to compute, \cite{mmd} and \cite{btest} introduce linear-time and block-based subquadratic-time statistics $\GMMD^2_l$ and $\GMMD_b^2$. The main related work in this regard is \cite{powermmd2}, which analyses a linear-time version of $\GMMD^2_l$ in the high-dimensional setting. We will discuss this last question in detail in Section \ref{sec:compstat}.

\textit{Result [Q5].}  In Section \ref{sec:compstat}, we show that expending more computation yields a direct statistical benefit of higher power; there is clear and smooth statistics-computation tradeoff for a family of earlier proposed sub-quadratic and linear time (kernel) two sample tests.

\paragraph{Q6.} What are the lower bounds for two sample testing in high dimensions?\\

\textit{Explanation [Q6].} We have not seen any lower bounds for the two sample testing problem in the literature, and definitely none for the high dimensional setting, even under MDA.

\textit{Result [Q6].} In Section \ref{sec:lower}, we prove tight lower bounds for two-sample testing under MDA, for the case of diagonal covariance, which show that all three tests are optimal in this setting, even including constants. 




\section{Adaptivity of $\GMMD$ and $\EED$ to MDA}\label{sec:main}

This section will aim to provide some answers to questions Q1-4.
Our main assumptions are inspired by those in \cite{bs} and \cite{cq}, and related followup papers.

\paragraph{[A1]} \textbf{Model.} $X_i = \Gamma Z_{1i} + \mu_P$ and $Y_i = \Gamma Z_{2i} + \mu_Q$ for $i=1,...,n$ where $Z_{1i},Z_{2i}$ are $k$-dimensional independent  zero mean, identity covariance random variables and $\Gamma$ is a $d \times D$ unknown full-rank deterministic transformation matrix for some $D \geq d$ satisfying $\Gamma \Gamma' = \Sigma$ (hence the  $d \times d$ population covariance $\Sigma$ is full-rank).  Denote the mean difference as $\delta := \mu_P - \mu_Q $.

\begin{remark} Assumption [A1] implies that $X,Y$ have means $\mu_1,\mu_2$ and covariances $\Sigma$, like in \cite{bs}. We do not assume that $X,Y$ have different covariances $ \Sigma_1,\Sigma_2$ like in \cite{cq}. The reason for this choice is as follows. $\GMMD$ and $\EED$ can detect differences in distributions $P,Q$ that occur in any finite moment. 
For example, by Bochner's theorem (see \cite{Rudin62}), the population quantity $\GMMD^2$ is precisely (up to constants)
$$
\int_{\R^d} \vert  \varphi_X(t) - \varphi_Y(t) \vert ^2 e^{-\gamma^2 \|t\|^2} \mathrm{d}t
$$
where $\varphi_X(t)=\E_{x \sim P}[e^{-it^Tx}]$ is the characteristic function of $X$ at frequency $t$ (similarly $\varphi_Y(t)$), and the population $\EED$ is precisely  (up to constants)
$$
\int\displaylimits_{(a,t) \in S^{d-1} \times \R}  [F_X(a,t) - F_Y(a,t)]^2 \mathrm{d}a\  \mathrm{d}t
$$
where $F_X(a,t) = P(a^T X \leq t)$ (similarly $F_Y(a,t)$) is the population CDF of $X$ when projected along direction $a$ and $S^{d-1}$ is the surface of the $d$ dimensional unit sphere; see \cite{energydistance} for a proof.
Because of this, $\GMMD$ and $\EED$ are sensitive to differences in second (and higher) moments of distributions. To analyze their power against MDA, it makes sense to nullify all other sources of signal like $\|\Sigma_1 - \Sigma_2\|_F^2$ that might alter the power of $\GMMD$ or $\EED$. 
\end{remark}


\paragraph{[A2]} \textbf{Moment assumption.} Each of the $D$ coordinates of $Z_{1i}$ and $Z_{2i}$ have $m \geq 8$ moments, each moment being a finite constant. For all $i=1,...,n$ and $s=1,2$, we have $\E(Z_{si1}^{\alpha_1}Z_{si2}^{\alpha_2},...,Z_{siD}^{\alpha_D}) = \E (Z_{si1}^{\alpha_1})\E(Z_{si 2}^{\alpha_2})...\E(Z_{si D}^{\alpha_D}) $ for all $\sum_{j=1}^D \alpha_j \leq 8$. 

\begin{remark}
Assumption [A2] was made in essentially the same form in \cite{bs} and \cite{cq}. Some of our calculations explicitly involve how much these moments deviate from those of a standard Gaussian.
We show in Section \ref{sec:expt} that many of our results hold experimentally for a variety of non-Gaussian distributions.
\end{remark}

\paragraph{[A3]} \textbf{Fairly good conditioning of $\Sigma$.} (a) We assume that $Tr(\Sigma^{2k}) = o(Tr^2(\Sigma^k))$ for $k=1,2$. (b) We also assume that $Tr(\Sigma) \asymp d$
and for $S_i \in \{ X_i,Y_i\}$,  the average $\|S_i - S_j\|^2/d$ exponentially concentrates around its expectation, i.e.
$$
P\left( \Bigg\vert   \frac{\|S_i - S_j\|^2}{d}  - \frac{\E\|S_i - S_j\|^2}{d} \Bigg\vert  > d^{-\nu} \right) \to 0 \text{ exponentially fast in (some polynomial of) $d$.}
$$
for some $\nu = \nu(\Sigma,m) \in (1/3, 1/2]$.

\begin{remark}
Assumption [A3] essentially means that $\Sigma$ is fairly well conditioned, and was also made in the aforementioned earlier works. To see this, note that if $\Sigma = \sigma^2 I$ then the conditions reduce to requiring $d = o(d^2)$. If all the eigenvalues of $\Sigma$ are bounded, this assumption is still met. When $\Sigma$'s eigenvalues are not bounded, this condition will be satisfied as long as $\Sigma$ is not terribly conditioned. This assumption is discussed in detail with several nontrivial examples in \cite{cq}. Similarly, $\nu(\Sigma,m)$ reflects the conditioning of $\Sigma$, and the number $m$ of moments of $S$. In the best case, with $d$ independent coordinates i.e. identity covariance $\Sigma =I$ and infinite moments, $\nu(\Sigma,m) = 1/2$. As we assume fewer moments or as we deviate away from diagonal covariance to more ill-conditioned matrices, $\nu(\Sigma,m)$ strays away from half, but we assume it is fairly well-conditioned, being at least $1/3$. We think that some such good conditioning is necessary for our theorems to hold, but that the scalar $1/3$ can be lowered.
\end{remark}


\paragraph{[A4]} \textbf{Low signal strength.} $\|\delta\|^2 = o \left( \min\left\{\frac{Tr^2(\Sigma)}{Tr(\Sigma^2)} \lambda_{\min}(\Sigma), \frac{Tr(\Sigma)}{d^\nu} \right\} \right)$ and $\delta^T \Sigma^k \delta = o(Tr(\Sigma^{k+1}))$ for $k=0,1,2,3$.

\begin{remark}
First recall that we assumed $\Sigma$ is full rank in Assumption [A1], so $\lambda_{\min}(\Sigma)>0$. Assumption [A4] essentially means that the signal strength is not very \textit{large} relative to the noise. For example, when $\Sigma = \sigma^2 I$, the assumption requires that $\|\delta\|^2/\sigma^2 = o(\sqrt d)$. Indeed, it more generally implies that $\|\delta\|^2 = o(Tr(\Sigma))$\footnote{This holds because $Tr(\Sigma) = Tr(\Sigma^2\Sigma^{-1}) \leq Tr(\Sigma^2)\lambda^{-1}_{\min}(\Sigma)$ by Cauchy-Schwarz inequality that $Tr(A^T B) \leq \|A\|_* \|B\|_{op}$ where $*,op$ refer to the nuclear and operator norms respectively.}. We need this assumption for technical reasons, and we conjecture that our results hold under a weaker assumption. Even in its present form, this is not such a strong assumption since (as we shall see in the theorem statements) if the signal strength is  large then the decision problem becomes too easy and such a regime is rather uninteresting.  Further note that $\delta^T \delta = o(Tr(\Sigma))$ implies, by Cauchy-Schwarz,
\begin{eqnarray*}
\delta^T \Sigma \delta &\leq& \lambda_{\max}(\Sigma)\|\delta\|^2 = o(\lambda_{\max}(\Sigma)Tr(\Sigma)),\\
\delta^T \Sigma^2 \delta &\leq& Tr(\Sigma^2) \|\delta\|^2 = o(Tr(\Sigma^2)Tr(\Sigma)),\\
\delta^T \Sigma^3 \delta &=& o(Tr(\Sigma^3)Tr(\Sigma)) \leq o(Tr(\Sigma^2)Tr^2(\Sigma)).
\end{eqnarray*}
\end{remark}

\paragraph{[A5]} \textbf{High-dimensional setting.}  $n = o(d^{3\nu-1}Tr(\Sigma^2)) = o(\sqrt {d} Tr^2(\Sigma)) = o(d^{2.5})$.

\begin{remark}
Currently, Assumption [A5] is needed only for a technicality in proving our main theorem, and we conjecture that it can be relaxed. 
\end{remark}


As in \cite{cq}, we do not assume that $(n,d) \to \infty$ at any particular rate. 
Instead, we will analyze their behavior in two regimes that have implicit control on $n,d$. For notational convenience, denote
\begin{eqnarray}
\sigma_{n1}^2 &:=& 8\frac{Tr(\Sigma^2)}{n^2} , \label{eq:sigma1}\\
\sigma_{n2}^2 &:=& 8 \frac{\delta^T \Sigma \delta }{n}. \label{eq:sigma2}
\end{eqnarray}
Recalling that $\delta:= \mu_P - \mu_Q$, the first theorem summarizes the power of $U_{CQ}$. 

\begin{theorem}\label{thm:cq}
Under [A1], [A2] and [A3a], 
 $U_{CQ}$ has asymptotic power which equals
\begin{equation}
\phi_{CQ} =  \Phi\left(-\frac{\sqrt{\frac{Tr(\Sigma^2)}{n^2}}}{\sqrt{\frac{Tr(\Sigma^2)}{n^2} + \frac{\delta^T \Sigma \delta}{n}}} \cdot z_\alpha  + \frac{\|\delta\|^2}{\sqrt{8\frac{Tr(\Sigma^2)}{n^2} + 8 \frac{\delta^T \Sigma \delta }{n}}} \right) + o(1)  \label{eq:cqpower}
\end{equation}
where  $\Phi$ is the Gaussian CDF and $z_\alpha$ is the threshold representing the $\alpha$-quantile of the standard Gaussian distribution.
\end{theorem}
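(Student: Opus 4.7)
The plan is to establish \eqref{eq:cqpower} in three steps: (i) compute the mean and variance of $U_{CQ}$ under $H_1$, (ii) prove asymptotic normality of the centered, appropriately scaled statistic in the high-dimensional regime, and (iii) convert that CLT to a power statement after verifying consistency of the null-variance estimator that forms the test's threshold. Since $\{(X_i,Y_i)\}_{i=1}^n$ are i.i.d.\ with $X_i\perp Y_j$ for all $i\neq j$, a direct expansion of $h_{CQ}$ over its four arguments yields $\mathbb{E}[U_{CQ}] = \mu_P^T\mu_P + \mu_Q^T\mu_Q - 2\mu_P^T\mu_Q = \|\delta\|^2$.

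For the variance, viewing $U_{CQ}$ as a symmetric degree-2 U-statistic over the pairs $Z_i:=(X_i,Y_i)$, I would apply a Hoeffding-style decomposition $U_{CQ} - \|\delta\|^2 = L_n + R_n$, where $L_n$ is the projection onto single-$Z_i$ terms (linear in $\delta$, and identically zero when $\delta=0$) and $R_n$ is the degenerate quadratic remainder. Under [A1], together with the moment-factorization half of [A2], one gets $\mathrm{Cov}(L_n,R_n)=0$; then $\mathrm{Var}(L_n)=\sigma_{n2}^2=8\delta^T\Sigma\delta/n$ is a standard i.i.d.\ computation, while $\mathrm{Var}(R_n)=\sigma_{n1}^2=8\mathrm{Tr}(\Sigma^2)/n^2$ follows from expanding centered inner products such as $(X_i-\mu_P)^T(X_j-\mu_P)$ and collapsing the fourth-order mixed moments that arise into $\mathrm{Tr}(\Sigma^2)$.

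The main obstacle is showing $(U_{CQ}-\|\delta\|^2)/\sigma_n \rightsquigarrow N(0,1)$ with $\sigma_n^2:=\sigma_{n1}^2+\sigma_{n2}^2$, in the regime where $d$ grows with $n$. The linear part $L_n/\sigma_{n2}$ is a sum of i.i.d.\ mean-zero random variables, so the classical Lyapunov CLT applies once moments are controlled via [A2]. The degenerate remainder $R_n$, however, is a quadratic form whose effective kernel dimension scales with $d$, so off-the-shelf U-statistic CLTs are not directly applicable. I would adopt the martingale-CLT strategy used in \cite{bs} and \cite{cq}: order the summands of $R_n$ along the natural filtration generated by $(X_1,Y_1),\ldots,(X_i,Y_i)$, cast the partial sums as a martingale, and verify a Lindeberg-type conditional-variance condition together with a Lyapunov condition on the sum of conditional fourth moments. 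Assumption [A3a], requiring $\mathrm{Tr}(\Sigma^{2k})=o(\mathrm{Tr}^2(\Sigma^k))$ for $k=1,2$, is precisely what is needed to bound the trace quantities that show up when these fourth moments are expanded, while [A2] (with $m\geq 8$ moments and moment factorization up to order eight) supplies the required product structure.

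To convert the CLT into \eqref{eq:cqpower}, I would use that the test rejects when $U_{CQ}>z_\alpha\,\widehat\sigma_{n1}$, where $\widehat\sigma_{n1}^2$ is the standard plug-in estimator of $8\mathrm{Tr}(\Sigma^2)/n^2$ (e.g., via the U-statistic estimator of $\mathrm{Tr}(\Sigma^2)$ analysed in \cite{cq}). Under [A1]--[A3a] this estimator is ratio-consistent for $\sigma_{n1}^2$ under $H_0$, and [A4] keeps it ratio-consistent under $H_1$ because $\|\delta\|^2$ is too small (relative to $\mathrm{Tr}(\Sigma)$) to contaminate the leading-order behaviour. Consequently,
\begin{equation*}
\phi_{CQ} = \Pr_{H_1}\!\left(\frac{U_{CQ}-\|\delta\|^2}{\sigma_n} > \frac{z_\alpha \widehat\sigma_{n1}-\|\delta\|^2}{\sigma_n}\right) + o(1) = \Phi\!\left(\frac{\|\delta\|^2}{\sigma_n}-\frac{\sigma_{n1}}{\sigma_n}\,z_\alpha\right) + o(1),
\end{equation*}
and substituting $\sigma_{n1},\sigma_{n2}$ from \eqref{eq:sigma1}--\eqref{eq:sigma2} recovers the claimed expression.
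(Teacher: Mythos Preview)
Your proposal is correct and follows essentially the same route as the paper. The paper does not give a self-contained proof of this theorem; it simply defers to the main result of \cite{cq}, noting that $U_{CQ}$ is asymptotically normal with variance $\sigma_{n1}^2$ under $H_0$ and $\sigma_{n1}^2+\sigma_{n2}^2$ under $H_1$ (the normality being proved via the martingale CLT strategy of \cite{bs} that you describe), and then reads off the power expression exactly as in your final display. Your outline---Hoeffding decomposition into a linear projection $L_n$ and a degenerate remainder $R_n$, Lyapunov CLT for $L_n$, martingale CLT for $R_n$ with [A3a] controlling the fourth-moment trace ratios, and ratio-consistency of $\widehat\sigma_{n1}$---is precisely the structure of the Chen--Qin argument the paper cites.

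One small caveat: the theorem assumes only [A1], [A2], [A3a], but you invoke [A4] to secure ratio-consistency of $\widehat\sigma_{n1}$ under $H_1$. In \cite{cq} the estimator of $\mathrm{Tr}(\Sigma^2)$ is built from leave-one-out centered inner products and is location-invariant by construction, so its ratio-consistency under $H_1$ does not require any smallness assumption on $\delta$; [A4] is not needed for this theorem.
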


This theorem follows from the main result of \cite{cq} for $U_{CQ}$, and hence we do not reproduce it here. There, the  authors prove that $U_{CQ}$ is asymptotically normally distributed with variance $\sigma_{n1}^2 + \sigma_{n2}^2$ under the alternative, and variance $\sigma_{n1}^2$ under the null (with $\Sigma_1=\Sigma_2=\Sigma$ and $n_1 = n_2 = n$ being used by us). This then gives rise to the above expression for the power $\phi$ fairly easily, except that the authors made a small mistake by interchanging $\sigma_{n1}$ and $\sigma_{n2}$ in one crucial expression (confirmed by email correspondence with the authors, summarized in the Appendix Sec. \ref{appsec:cq}). Another minor difference is that we write down the power as a single expression, while \cite{cq} prefer to write them down in the two aforementioned special cases of low and high SNR.

%

\begin{remark}
The null distribution of $U_{CQ}$ is asymptotically Gaussian under MDA in this high-dimensional setting. This is in stark contrast to the fixed-$d$, increasing-$n$ setting, where the null distribution is an infinite sum of weighted chi-squared distributions, due to the properties of degenerate U-statistics (see \cite{serfling}). This seems to have first been proved by \cite{bs} for $T_{BS}$ using a martingale central limit theorem (see \cite{hallheyde}).
\end{remark}

The next theorem summarizes the power of $\GMMD$, which is also one of the main results of the paper.

\begin{theorem}\label{thm:gmmd}
Assume [A1], [A2], [A3], [A4] and [A5],
and let the bandwidth be chosen as $\gamma^2 = \omega (2Tr(\Sigma))$. Then
 $\GMMD_\gamma$ has asymptotic power 
which is independent of $\gamma$, and equals the power of $U_{CQ}$. In other words, the power is
$$
\phi_{\GMMD} = \Phi\left(-\frac{\sqrt{\frac{Tr(\Sigma^2)}{n^2}}}{\sqrt{\frac{Tr(\Sigma^2)}{n^2} + \frac{\delta^T \Sigma \delta}{n}}} \cdot z_\alpha  + \frac{\|\delta\|^2}{\sqrt{8\frac{Tr(\Sigma^2)}{n^2} + 8 \frac{\delta^T \Sigma \delta }{n}}} \right) + o(1)
$$
for all $\gamma^2 = \omega(2Tr(\Sigma))$.
\end{theorem}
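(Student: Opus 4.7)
The plan is to Taylor expand the Gaussian kernel around zero and show that, up to a lower-order remainder, $\GMMD^2_\gamma$ equals a fixed positive scalar multiple of $U_{CQ}$. Because the test $\mathbb{I}(\GMMD^2_\gamma/\sqrt{\mathrm{Var}(\GMMD^2_\gamma)} > z_\alpha)$ is invariant under multiplying the statistic by any positive scalar, Theorem \ref{thm:cq} will then immediately deliver the claimed power expression, and independence from $\gamma$ falls out automatically. Concretely, I would expand $g_\gamma(a,b) = \sum_{k=0}^\infty (-1)^k \|a-b\|^{2k}/(k!\,\gamma^{2k})$, plug into the symmetrization \eqref{eq:hk}, note that the $k=0$ term drops out, and use the polarization identity $\|a-b\|^2 = \|a\|^2 + \|b\|^2 - 2a^Tb$ to show that at $k=1$ the diagonal squared-norm terms cancel across the four summands of $h_\kappa$, leaving exactly $(2/\gamma^2)h_{CQ}$. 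This gives the decomposition $\GMMD^2_\gamma = (2/\gamma^2)\,U_{CQ} + R_n$ with $R_n$ capturing all $k \geq 2$ contributions.

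The bulk of the work is to show $R_n$ is negligible relative to both the leading mean $\|\delta\|^2/\gamma^2$ (under $H_1$) and the leading standard deviation $\sqrt{\mathrm{Var}(U_{CQ})}/\gamma^2$ (under either hypothesis). Writing $\|S_i-S_j\|^2 = 2\mathrm{Tr}(\Sigma) + \varepsilon_{ij}$ for same-distribution pairs and $\|X_i-Y_j\|^2 = 2\mathrm{Tr}(\Sigma)+\|\delta\|^2+\varepsilon'_{ij}$ for cross pairs (with $\varepsilon, \varepsilon'$ mean-zero), a binomial expansion of $\|\cdot\|^{2k}$ combined with the symmetrization in $h_\kappa$ forces the $2\mathrm{Tr}(\Sigma)$ constants to cancel. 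The surviving $k=2$ bias contribution to $\E\,\GMMD^2_\gamma$ turns out to be $O((\mathrm{Tr}(\Sigma)\|\delta\|^2 + \|\delta\|^4)/\gamma^4)$, whose ratio to the leading $k=1$ mean $\|\delta\|^2/\gamma^2$ is $O((\mathrm{Tr}(\Sigma)+\|\delta\|^2)/\gamma^2) = o(1)$ by the bandwidth condition $\gamma^2 = \omega(\mathrm{Tr}(\Sigma))$ together with [A4]. The $k \geq 3$ contributions decay geometrically on the event $\{\max_{i\neq j}\|S_i-S_j\|^2/\gamma^2 \ll 1\}$, which holds with probability $1-o(1)$ by combining [A3b]'s exponential concentration with a union bound over $O(n^2)$ pairs; assumption [A5] is what lets the exponential tail beat this polynomial union bound.

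For the variance contribution of $R_n$, I would view each $T_k := \tfrac{1}{n(n-1)}\sum_{i\neq j}[\|X_i-X_j\|^{2k} + \|Y_i-Y_j\|^{2k} - \|X_i-Y_j\|^{2k} - \|X_j-Y_i\|^{2k}]$ as a U-statistic in its own right and apply a Hoeffding-type decomposition: [A2] controls the required cross-moments and [A3a] bounds the resulting trace expressions by $\mathrm{Tr}(\Sigma^{2k})$ plus lower-order signal terms, so that after dividing by $\gamma^{4k}$ and comparing to $\mathrm{Var}((2/\gamma^2)U_{CQ}) = 32\,\mathrm{Tr}(\Sigma^2)/(n^2\gamma^4)$ every ratio vanishes. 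Slutsky's theorem together with the central limit statement inside Theorem \ref{thm:cq} then gives $\GMMD^2_\gamma/\sqrt{\mathrm{Var}(\GMMD^2_\gamma)}$ the same limiting distribution as $U_{CQ}/\sqrt{\mathrm{Var}(U_{CQ})}$ under both $H_0$ and $H_1$, yielding the stated power for every $\gamma^2 = \omega(\mathrm{Tr}(\Sigma))$. The main obstacle is the Taylor-tail control: geometric decay in $k$ is only available on a high-probability event, so off that event one must fall back on the moment bounds of [A2] and the low-signal bound in [A4] to ensure the excluded mass does not contaminate the Gaussian approximation.
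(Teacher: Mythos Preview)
Your proposal is correct and follows essentially the same route as the paper: Taylor-expand the kernel, identify the $k=1$ term as a scalar multiple of $U_{CQ}$, show the $k=2$ piece has negligible mean and variance via a Hoeffding decomposition, and control the higher-order tail uniformly on a high-probability concentration event. The paper's implementation differs only in that it expands $e^{-a}$ around $\tau = 2\mathrm{Tr}(\Sigma)/\gamma^2$ rather than around $0$ (so the remainder is directly $(a-\tau)^3$ rather than requiring your secondary binomial re-centering of each $\|\cdot\|^{2k}$), and it uses a single Lagrange remainder at third order instead of a geometric tail sum; two minor points to adjust in execution are that the dominant variance of the $k=2$ U-statistic comes out as $\mathrm{Tr}^2(\Sigma)\,\mathrm{Tr}(\Sigma^2)/n^2$ (not $\mathrm{Tr}(\Sigma^{4})/n^2$), and [A5] is used not to beat the union bound (the exponential rate in [A3b] already handles that) but to compare the size of the cubic remainder to $\sqrt{\mathrm{Var}(U_{CQ})}/\gamma^2$.
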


The proof of this theorem is covered in Section \ref{sec:proofs}. While one may conjecture a result like the above due to the claims of \cite{noureddine} that the Gaussian kernel often behaves like the linear kernel in high dimensions, their results only hold true when $n \asymp d$ (apart from other differences in assumptions). Further, they also interpret the results rather pessimistically, by saying that these kernels do not provide an advantage in the high-dimensional setting, but we will demonstrate in experiments that when the linear kernel does not suffice (the distributions have the same mean but differ in their variances), then $U_{CQ}$ has trivial power but $\GMMD$'s power tends to one in reasonable scenarios. Of course, more samples are probably needed to detect differences in second moments compared to differences in first moments.
 Hence, we choose to interpret the above result  optimistically --- \textit{not only is $\GMMD$ capable of detecting any difference in distributions, but it also detects differences in means as well as $U_{CQ}$ which is designed to test only mean differences.}

For the purpose of mathematical analysis, we now introduce a family of statistics, for which $\EED_u$ is a special case. These are defined (recalling Eq.\eqref{eq:hd}) as
\begin{eqnarray*}
\EED_\gamma &:=& \ED_u(e_\gamma(\cdot,\cdot)) \\
\text{where } e_\gamma(a,b) &:=& \sqrt{\gamma^2 - 2Tr(\Sigma) + \|a-b\|^2_2}
\end{eqnarray*}
where $\gamma^2 \geq 2Tr(\Sigma)$ is a constant user-chosen bandwidth parameter. Note that 
$$
\lim_{\gamma^2 \to 2Tr(\Sigma)^+} \EED_\gamma = \EED_u
$$

The next theorem summarizes the power of $\EED_\gamma$, in all cases when $\gamma^2 = \omega(2Tr(\Sigma))$.

\begin{theorem}\label{thm:eed}
Assume [A1], [A2], [A3], [A4] and [A5], and let the bandwidth be chosen as $\gamma^2 = \omega (2Tr(\Sigma))$. Then
 $\EED_\gamma$ has asymptotic power 
which is independent of $\gamma$, and equals the power of $U_{CQ}$. In other words, the power is
$$
\phi_{\EED} = \Phi\left(-\frac{\sqrt{\frac{Tr(\Sigma^2)}{n^2}}}{\sqrt{\frac{Tr(\Sigma^2)}{n^2} + \frac{\delta^T \Sigma \delta}{n}}} \cdot z_\alpha  + \frac{\|\delta\|^2}{\sqrt{8\frac{Tr(\Sigma^2)}{n^2} + 8 \frac{\delta^T \Sigma \delta }{n}}} \right) + o(1)
$$
for all $\gamma^2 = \omega(2Tr(\Sigma))$.
\end{theorem}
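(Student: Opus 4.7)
The argument closely parallels the proof of Theorem 2: we reduce $\EED_\gamma$ to a scalar multiple of $U_{CQ}$ via Taylor expansion and invoke Theorem 1. The only substantive difference is that we expand $\sqrt{1+\epsilon}$ rather than $\exp(-\epsilon)$. Write
$$
e_\gamma(a,b) \;=\; \gamma\sqrt{1+\epsilon_\gamma(a,b)}, \qquad \epsilon_\gamma(a,b) \;:=\; \frac{\|a-b\|^2 - 2\,Tr(\Sigma)}{\gamma^2}.
$$
Assumption [A3b] guarantees that $\|a-b\|^2$ concentrates exponentially around its mean $2Tr(\Sigma) + \|\delta\|^2 \mathbb{I}\{a,b \text{ cross-sample}\}$; combined with the control of $\|\delta\|^2$ from [A4] and the bandwidth condition $\gamma^2 = \omega(Tr(\Sigma))$, this yields $\epsilon_\gamma(a,b) = o_P(1)$ uniformly over all pairs, justifying term-wise Taylor expansion.

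Expanding $\sqrt{1+\epsilon} = 1 + \tfrac{\epsilon}{2} - \tfrac{\epsilon^2}{8} + O(\epsilon^3)$ and substituting into the antisymmetric combination $h_\rho$, the constant $\gamma$ and the constant shift $-Tr(\Sigma)/\gamma$ (from $1$ and the $-2Tr(\Sigma)$ part of $\epsilon/2$) both cancel because each appears with equal positive and negative signs. A direct expansion of squared norms gives the key identity
$$
\tfrac{1}{2\gamma}\bigl[\|X-Y'\|^2 + \|X'-Y\|^2 - \|X-X'\|^2 - \|Y-Y'\|^2\bigr] \;=\; \tfrac{1}{\gamma}\,h_{CQ}(X,X',Y,Y'),
$$
so that we obtain the decomposition
$$
\EED_\gamma \;=\; \tfrac{1}{\gamma}\,U_{CQ} \;+\; \mathcal{R}_n,
$$
where $\mathcal{R}_n$ is the U-statistic built from the quadratic and higher Taylor remainders. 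Since the common scalar $1/\gamma$ cancels in the standardized statistic $\EED_\gamma/\sqrt{\mathrm{Var}(\EED_\gamma)}$, it suffices to show that $\mathcal{R}_n$ is negligible compared to the scaled $U_{CQ}/\gamma$ in both mean and standard deviation; Slutsky's theorem and Theorem 1 will then yield the stated $\gamma$-independent power.

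Controlling $\mathcal{R}_n$ is the heart of the proof and the main obstacle. Using [A2] one shows that the quadratic-remainder bias has order $(\delta^T\Sigma\delta + \|\delta\|^4)/\gamma^3$; comparing to the alternative shift $\|\delta\|^2/\gamma$ of $U_{CQ}/\gamma$ and to the null noise level $\sqrt{Tr(\Sigma^2)}/(\gamma n)$, the restrictions imposed by [A4] (namely $\delta^T\Sigma\delta = o(Tr(\Sigma^2))$ and $\|\delta\|^2 = o(Tr(\Sigma))$) together with $\gamma^2 = \omega(Tr(\Sigma))$ are exactly what make both comparisons $o(1)$. The variance of $\mathcal{R}_n$ is the more delicate computation: decomposing $\mathrm{Var}(\mathcal{R}_n)$ as a sum of covariances over index pairs that overlap in zero, one, or two indices, one uses [A2] and [A3a] to reduce each covariance to a combination of $Tr(\Sigma^k)$ for $k \le 4$, and then [A5] ensures that after dividing by $Tr(\Sigma^2)/(n\gamma)^2$ the resulting bound vanishes. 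Higher-order terms in the Taylor series pick up additional factors of $\epsilon_\gamma = o_P(1)$ and are handled identically. The chief technical difficulty is keeping the degenerate part of the remainder U-statistic tight enough to stay within the budget $\sqrt{Tr(\Sigma^2)}/n$; exploiting that $R_2$ is nearly centered (its dominant symmetric part cancels in $h_\rho$) is what allows the naive $n(n-1)$ correlated-pair accumulation to be avoided.
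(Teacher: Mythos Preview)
Your proposal is correct and follows essentially the same route as the paper. The paper's proof of Theorem~\ref{thm:eed} is a one-paragraph reduction to the proof of Theorem~\ref{thm:gmmd}: it writes $e_\gamma(a,b)=\sqrt{\gamma^2-2Tr(\Sigma)}\,(1+a)^{1/2}$ with $a=\|a-b\|^2/(\gamma^2-2Tr(\Sigma))$, Taylor expands $(1+\cdot)^{1/2}$ around $\tau=2Tr(\Sigma)/(\gamma^2-2Tr(\Sigma))$, observes that the resulting powers $(a-\tau)^k$ are the same centered quantities that appear in the $\GMMD$ expansion, and then simply invokes Steps~(i)--(iii) of the $\GMMD$ proof verbatim (bounding the third-order Taylor remainder $R_3$, the second-order mean $\theta_2$, and the variance $s_4=\mathrm{Var}(U_4)$). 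Your write-up expands $\gamma\sqrt{1+\epsilon}$ around $\epsilon=0$ instead, which differs from the paper's centering only by the harmless factor $\gamma^2/(\gamma^2-2Tr(\Sigma))=1+o(1)$; the first-, second-, and third-order pieces you describe line up exactly with the paper's Steps~(ii), (iii), and~(i) respectively, and the assumption usage you spell out ([A3b] for uniform concentration, [A4] for the bias comparison, [A5] for the cubic remainder) matches the paper's.
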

The proof of this theorem is similar to the proof of Theorem \ref{thm:gmmd}, and hence is briefly covered at the  end of Section \ref{sec:proofs}, after the proof of Theorem \ref{thm:gmmd}.

\begin{remark} We remark on our inability to prove the above theorems for the limiting case of $\gamma^2 \asymp 2Tr(\Sigma)$. The proofs of Theorems \ref{thm:gmmd} and \ref{thm:eed} are based on a Taylor expansion of the $h_\kappa$ and $h_\rho$ respectively (recall Eqs.\eqref{eq:hk},\eqref{eq:hd} for their definition). This leads to a ``dominant'' Taylor term $U_2/\gamma^2$ which is a U-statistic in $h_2$ and  a ``remainder'' term $U_4/\gamma^4$ which is a U-statistic in $h_4$, where
\begin{eqnarray}
h_2(X,X',Y,Y') &=& \|X-X'\|^2 + \|Y-Y'\|^2 - \|X-Y'\|^2 - \|X'-Y\|^2 \label{eq:h2},\\ 
h_4(X,X',Y,Y') &=& \|X-X'\|^4 + \|Y-Y'\|^4 - \|X-Y'\|^4 - \|X'-Y\|^4.
\label{eq:h4}
\end{eqnarray}
One can easily observe that $h_2 = -2 h_{CQ}$ (see Eq.\eqref{eq:hcq}) and hence the behavior of $U_2$ is immediately captured by the behavior of $U_{CQ}$, the most important fact being that $U_2$ is always Gaussian under the null and the alternative (as mentioned after Theorem \ref{thm:cq} and its following remarks). 
When $\gamma^2 = \omega(Tr(\Sigma))$, we prove that $U_4/\gamma^4 = o_P(U_2/\gamma^2)$. However, when $\gamma^2 \asymp 2Tr(\Sigma)$, our results suggest that $U_4/\gamma^4 = O_P(U_2/\gamma^2)$. However, while we know that $U_2/\gamma^2$ is asymptotically Gaussian, we do not know the limiting distribution of $U_4/\gamma^4$, even though we undertake tedious calculations to find the mean and variance of $U_4$. Hence, while this allows us to make arguments about the mean and variance of $\GMMD$ and $\EED$, we cannot make power claims since for that purpose we require knowing the limiting distribution of $U_4$ under the null. While we conjecture that it is indeed Gaussian and simulations support this, the proof is vastly more complicated than for $U_2$ because the number of terms to be controlled in the martingale central limit theorem is larger (by an order of magnitude, as the number of terms grows exponentially). Proving the above theorem statements for the limiting case is an important direction for future work, and may require development of the theory of U-statistics for high dimensional variables. However, for the moment we show a variety of experiments that support our conjecture, implying that the borderline case is probably a technical limitation.
\end{remark}

\subsection{The Special Case of $\Sigma = \sigma^2 I$}

\noindent Though no explicit assumptions are placed on $n,d$ for the above expression (and hence for consistency to hold), for further understanding of the power of these tests, let us consider the situation when $\Sigma = \sigma^2 I$ and define the signal-to-noise ratio (SNR) as
$$
\mbox{SNR  ~}~ \Psi := \frac{\|\delta\|}{\sigma}.
$$
One can think of $\Psi^2$ as the problem-dependent constant, which determines how hard the testing problem is - of course, the larger the SNR, the easier the distributions are to distinguish. Indeed, in the special case of $P,Q$ being spherical Gaussians, $\Psi^2$ is just the KL-divergence between these distributions.
Then, the expression for power from Eq.\eqref{eq:cqpower}  simplifies to
\begin{equation}\label{eq:powerdiag}
\Phi \left( -\frac{\sqrt d}{\sqrt{ d + n \Psi^2}} z_\alpha + \frac{\Psi^2}{\sqrt{8d/n^2 + 8\Psi^2/n}} \right) + o(1).
\end{equation}
We are most interested in the regimes where $\Psi$ is small. Let us define the three regimes as follows:
\begin{eqnarray}
\mbox{Low SNR: ~}~ \Psi &=& o(\sqrt{d/n}),\\
\mbox{Medium SNR: ~}~ \Psi &\asymp& \sqrt{d/n},\\
\mbox{High SNR: ~}~ \Psi &=& \omega(\sqrt{d/n}). \label{eq:highSNR}
\end{eqnarray}

\begin{remark}
We find it worthy to note that the behavior is \textit{different}\footnote{There is a mistake/typo in the paper by \cite{cq}, which causes them to miss this surprising observation. We have confirmed this important typo with the authors, and describe the context of its occurrence in more detail in the Appendix Sec. \ref{appsec:cq}.}  in the low and high SNR regime. Specifically, in the Low SNR regime, the asymptotic power  is
\begin{equation}
\phi_L =  \Phi\left(-z_\alpha + \frac{n\Psi^2}{\sqrt {8d}} \right) \mbox{~ when ~} \Psi = o(\sqrt{d/n})\label{eq:lowpower}
\end{equation}
while in the high SNR regime, the asymptotic power is
\begin{equation}
\phi_H = \Phi(\sqrt n \Psi/\sqrt 8) \mbox{~ when ~} \Psi = \omega (\sqrt{d/n}). \label{eq:highpower}
\end{equation}
The above two rates  match in the Medium SNR regime, yielding a power  $\asymp \Phi(\sqrt d)$.
\end{remark}



\section{Lower Bounds when $\Sigma=\sigma^2 I$}\label{sec:lower}

Here we show that the form of the power achieved in Theorem \ref{thm:cq} is not improvable under certain assumptions. For example, in the case when  $\Sigma = \sigma^2 I$, we can provide matching lower bounds to Eq. \ref{eq:powerdiag} using techniques from \cite{ingstersuslina} designed for Gaussian normal means problem.
The proof relies on the Gaussian approximations of the central and noncentral chi-squared distributions.

\begin{proposition}\label{prop:lower}
Let $G_d(x,0)$ be the cdf of a central chi-squared distribution with $d$ degrees of freedom and $G_d(x,r)$ be the cdf of a noncentral chi-squared distribution with $d$ degrees of freedom and noncentrality parameter $r$. Then as $d\rightarrow \infty$, we have uniformly over $x,r$
\begin{eqnarray}
G_d(x,0) &=& \Phi\left( \frac{x-d}{\sqrt{2d}}\right) + o(1) ,\\
G_d(x,r^2) &=& \Phi\left( \frac{x-d-r^2}{\sqrt{2d + 4r^2}}\right) + o(1) ,\\
G_d(T_{d\alpha},r^2) &=& \Phi \left( \frac{\sqrt{2d}}{\sqrt{2d+4r^2}}z_\alpha - \frac{r^2}{\sqrt{2d + 4r^2}} \right) + o(1) \label{eq:ingster}
\end{eqnarray}
where $T_{d\alpha}$ is $1-\alpha$ quantile cutoff of the $\chi^2_d$ and $z_\alpha$ is the corresponding quantile of the standard normal.
\end{proposition}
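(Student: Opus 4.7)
My plan is to derive all three statements from classical central limit theorem arguments applied to representations of central and noncentral chi-squared distributions as sums of independent random variables, with Berry--Esseen bounds providing the uniformity claim.

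First I would prove (1). A central $\chi^2_d$ variable is distributed as $\sum_{i=1}^d Z_i^2$ with $Z_i$ i.i.d.\ standard normal, each $Z_i^2$ having mean $1$ and variance $2$ and a finite third absolute moment. The Berry--Esseen theorem then gives
\begin{equation*}
\sup_{x \in \mathbb{R}} \left| G_d(x,0) - \Phi\!\left(\frac{x-d}{\sqrt{2d}}\right) \right| \;\leq\; \frac{C}{\sqrt d},
\end{equation*}
which is $o(1)$ uniformly in $x$. For (2), I would use the decomposition $\chi^2_d(r^2) \stackrel{d}{=} \sum_{i=1}^d (Z_i + \mu_i)^2$ with $\sum \mu_i^2 = r^2$, so the summands are independent with means $1 + \mu_i^2$ summing to $d + r^2$, and variances $2 + 4\mu_i^2$ summing to $2d + 4 r^2$. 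A Lindeberg/Berry--Esseen bound for independent but non-identically distributed summands yields the approximation, with the uniformity over $x$ and over $r$ (in the regime of interest, e.g.\ $r^2 = O(d)$ which is the one needed in Section \ref{sec:main}) controlled by a Lyapunov ratio of the form $(\sum (2+4\mu_i^2)^{3/2})/(2d+4r^2)^{3/2}$, which vanishes as $d \to \infty$ since the $\mu_i^2$ are bounded by $r^2$ but only their sum matters.

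The third statement is a direct substitution. From (1), the $1-\alpha$ quantile $T_{d\alpha}$ of $\chi^2_d$ satisfies $T_{d\alpha} = d + \sqrt{2d}\, z_\alpha + o(\sqrt d)$; this follows by inverting the Gaussian approximation and using continuity of $\Phi^{-1}$ at $1-\alpha$. Plugging this into (2) with $x = T_{d\alpha}$ gives
\begin{equation*}
G_d(T_{d\alpha},r^2) \;=\; \Phi\!\left(\frac{d + \sqrt{2d}\, z_\alpha + o(\sqrt d) - d - r^2}{\sqrt{2d+4r^2}}\right) + o(1)
\;=\; \Phi\!\left(\frac{\sqrt{2d}}{\sqrt{2d+4r^2}}\, z_\alpha - \frac{r^2}{\sqrt{2d+4r^2}}\right) + o(1),
\end{equation*}
where the $o(\sqrt d)$ error is absorbed into the overall $o(1)$ because the denominator $\sqrt{2d+4r^2}$ is at least of order $\sqrt d$ and $\Phi$ is uniformly Lipschitz.

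The main obstacle is making precise the sense in which (2) holds \emph{uniformly over $r$}. The Berry--Esseen rate depends on $r$ through the Lyapunov ratio, and for very large $r^2$ (super-linear in $d$) the approximation can degrade or become vacuous. The cleanest resolution is to restrict to $r^2 = O(d)$ (which suffices for the application in the lower bound proof, where $r^2$ corresponds to a squared signal-to-noise quantity of moderate size) and to observe that in this regime the third absolute moment of each summand is bounded by a constant times $(1+\mu_i^2)^{3/2}$, so Cauchy--Schwarz gives $\sum (1+\mu_i^2)^{3/2} = O(d + r^2 \sqrt{d})$, making the Lyapunov ratio $O(1/\sqrt d)$ uniformly in such $r$. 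I would also note that one could alternatively use the Wilson--Hilferty cube-root transformation to get sharper approximations, but the plain CLT is enough for the stated $o(1)$ conclusion.
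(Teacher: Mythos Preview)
Your approach is correct and essentially self-contained, whereas the paper's proof simply cites \cite{ingstersuslina} for the first two approximations and only derives the third by substitution.  For (1) and (2) the paper writes ``The first two expressions appear verbatim in \cite{ingstersuslina}[Ch 1.3, Pg 12]'' and then for (3) substitutes $x = T_{d\alpha}$ into (2), using the fact $(T_{d\alpha}-d)/\sqrt{2d} = z_\alpha + o(1)$, which it derives from the weak convergence $(\chi^2_d - d)/\sqrt{2d} \rightsquigarrow N(0,1)$ exactly as you do.  So for (3) the two arguments coincide.

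The genuine difference is that you actually supply a Berry--Esseen argument for (1) and (2), which the paper does not.  Your treatment of (2) is more careful than the paper's citation in that you explicitly flag the uniformity-in-$r$ issue and restrict to $r^2 = O(d)$, which is indeed all that is needed downstream.  One small sharpening: since the noncentral $\chi^2_d(r^2)$ law does not depend on how $r^2$ is allocated among the $\mu_i$, you are free to choose the most convenient decomposition for the Lyapunov ratio; taking $\mu_i = r/\sqrt{d}$ makes the summands i.i.d.\ and gives a Berry--Esseen rate $O(1/\sqrt{d})$ uniformly in \emph{all} $r \geq 0$, removing the need for the $r^2 = O(d)$ restriction and the somewhat loose Cauchy--Schwarz step.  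Either way, your proof is complete where the paper's is a citation.
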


\begin{remark}
Our Eq.\eqref{eq:ingster} differs from \cite{ingstersuslina}[Ch 1.3, Pg 13, Eq. 1.14] where the authors applied the additional approximation that $d \to \infty$ with $r$ fixed (or just $d >> r$) to get 
\begin{equation}\label{eq:ingsterapprox}
G(T_{d\alpha},r^2) = \Phi(z_\alpha - \rho^2/\sqrt{2d}) + o(1).
\end{equation}
We do not make this approximation.
\end{remark}

\begin{proof}[Proof of Proposition \ref{prop:lower}]
The first two expressions appear verbatim in \cite{ingstersuslina}[Ch 1.3, Pg 12]. Substituting $x = T_{d\alpha}$ into the second expression yields
$$
G_d(T_{d\alpha},r^2) = \Phi\left( \frac{T_{d\alpha}-d}{\sqrt{2d + 4r^2}} - \frac{r^2}{\sqrt{2d + 4r^2}} \right) + o(1)
$$
The last expression then follows due to the following fact:
\begin{equation}\label{eq:chisq}
\frac{T_{d\alpha} - d}{\sqrt{2d}} = z_\alpha + o(1),
\end{equation}
Eq.\eqref{eq:chisq} holds by the following argument. First note that
$$(\chi^2_d - d)/\sqrt{2d} \rightsquigarrow  N(0,1).$$
Then by definition of $T_{d\alpha}$,
$$
P(\chi^2_d > T_{d\alpha}) \leq \alpha
$$
which then implies
$$
P\left(Z > \frac{T_{d\alpha} - d}{\sqrt{2d}} + o(1) \right) \leq \alpha
$$ for standard normal $Z$. Since we know that $P(Z > z_\alpha) \leq \alpha$, Eq.\eqref{eq:chisq} follows.

\end{proof}

Next, define $S_{d}(\rho) = \{ \delta \in \R^d ~\vert ~ \|\delta\| = \rho \}$ to be the surface of the $d$-dimensional sphere of radius $\rho$. For the normal means problem, we are given $Z \sim N(\delta,I_d)$ and we test $H_0: \delta=0$ against   $H_1: \delta \in S_d(\rho)$. Recalling the definition of $[\eta]_{n,d,\alpha}$ from Eq.\eqref{eq:tests}, we analogously define $[\eta]_{d,\alpha}$ for the normal means problem as the set of all tests from $\R^d \to [0,1]$ with expected type-1 error at most $\alpha$. Define the minimax power at level $\alpha$ as
$$
\beta(\rho,\alpha) := \inf_{\eta \in [\eta]_{d,\alpha}} \sup_{\delta \in S_d (\rho)} \E_\delta \eta.
$$


\begin{proposition}\label{prop:minimax}
Given $Z \sim N(\delta,I_d)$ where $\|\delta\|=\rho$, the minimax power for the normal means problem is
$$\beta(\rho,\alpha) = 1-G_d(T_{d\alpha}, \rho^2) = \Phi\left(-\frac{\sqrt{2d}}{\sqrt{2d+4\rho^{2}}}T_\alpha + \frac{\rho^{2}}{\sqrt{2d + 4 \rho^{2}}} \right) + o(1).$$
\end{proposition}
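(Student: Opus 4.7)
The plan is to sandwich $\beta(\rho,\alpha)$ between matching achievability and impossibility bounds, both equal to $1 - G_d(T_{d\alpha},\rho^2)$, and then invoke Eq.\eqref{eq:ingster} of Proposition \ref{prop:lower} to rewrite that expression in the displayed Gaussian form. The core idea is that rotational symmetry of both the null $\{0\}$ and the alternative $S_d(\rho)$ reduces the minimax test to a function of $\|Z\|^2$ only.

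First I would establish achievability by exhibiting the chi-squared test $\eta^\star(Z) := \mathbb{I}(\|Z\|^2 > T_{d\alpha})$. Under $H_0$, $\|Z\|^2 \sim \chi^2_d$, so by construction of $T_{d\alpha}$ the test has exact size $\alpha$ and lies in $[\eta]_{d,\alpha}$. Under any $\delta \in S_d(\rho)$, $\|Z\|^2 \sim \chi^2_d(\rho^2)$ is noncentral chi-squared with noncentrality parameter $\rho^2 = \|\delta\|^2$, so $\E_\delta \eta^\star = 1 - G_d(T_{d\alpha},\rho^2)$, constant over $S_d(\rho)$ (independent of the direction of $\delta$). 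This yields the achievability direction.

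For the matching lower bound I would use a Bayes-risk argument. Let $\pi$ be the uniform (rotationally invariant) distribution on $S_d(\rho)$; for any level-$\alpha$ test $\eta$, averaging gives $\inf_{\delta \in S_d(\rho)} \E_\delta \eta \leq \E_{\delta \sim \pi} \E_\delta \eta$, so it suffices to identify the level-$\alpha$ test maximizing the right-hand side. By Neyman--Pearson, that optimum is the likelihood-ratio test of the mixture density $\bar p(z) := \int \phi_0(z-\delta)\,d\pi(\delta)$ against the standard Gaussian density $\phi_0$. Rotational invariance of $\pi$ forces $\bar p(z)$ to depend on $z$ only through $\|z\|$, and a direct calculation (integrating out angular variables on $S^{d-1}$) gives $\bar p(z)/\phi_0(z) \propto e^{-\rho^2/2}\, I_{(d-2)/2}(\rho\|z\|)/(\rho\|z\|)^{(d-2)/2}$, a strictly increasing function of $\|z\|^2$. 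Hence the Bayes-optimal test coincides with $\eta^\star$ and has power $1 - G_d(T_{d\alpha},\rho^2)$. Together with the achievability this pins down $\beta(\rho,\alpha) = 1 - G_d(T_{d\alpha},\rho^2)$, and applying Eq.\eqref{eq:ingster} with $r = \rho$ delivers the displayed $\Phi$-expression.

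The delicate step is the monotonicity claim inside the Neyman--Pearson reduction: one must justify that the optimal level-$\alpha$ rejection region $\{\bar p/\phi_0 > c\}$ is really a right tail in $\|Z\|^2$ rather than some more complicated sub-level set. This can be handled either by the Bessel computation above or by the standard fact that noncentral $\chi^2_d$ densities have monotone likelihood ratio in the noncentrality parameter. A cleaner alternative I would fall back on is Hunt--Stein for the orthogonal group $O(d)$: since both the null and $S_d(\rho)$ are $O(d)$-invariant, a minimax test exists that is $O(d)$-invariant, and any such test depends on $Z$ only through $\|Z\|$, so the problem reduces to the one-dimensional UMP test for the noncentrality in a $\chi^2_d$ family, which is precisely the right-tail test.
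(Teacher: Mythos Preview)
Your argument is correct and is essentially the content the paper defers to a citation: the paper's proof simply points to Example 2.2 / Proposition 2.15 of \cite{ingstersuslina} for the identity $\beta(\rho,\alpha)=1-G_d(T_{d\alpha},\rho^2)$ and then applies Eq.\eqref{eq:ingster}, while you have unpacked that reference with the standard uniform-prior Bayes reduction (equivalently Hunt--Stein for $O(d)$) to arrive at the same $\chi^2$ tail. So the approaches coincide; yours is just self-contained where the paper relies on the external source.
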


\begin{proof}
This proposition is \textit{almost} verbatim from Proposition 2.15 of Pg 69 of \cite{ingstersuslina}. Its proof is given in Example 2.2 on pg 51 of \cite{ingstersuslina}, the end of the example yielding the expression for power as $G_d(T_{d\alpha}, \rho^{2})$. The only difference in our proposition statement is that we directly use the expression $G_d(T_{d\alpha}, \rho^{2})$ in Eq.\eqref{eq:ingster} instead of the approximation in Eq.\eqref{eq:ingsterapprox}.

\end{proof}

The above proposition now directly yields a lower bound for two sample testing when $\Sigma = \sigma^2 I$. Let $\mathcal{F}_d(\rho,\sigma) := \{(P,Q): \E_P[X] - \E_Q[Y] \in S_d(\rho), \E[XX^T] - \E[X]\E[X]^T = \E[YY^T] - \E[Y]\E[Y]^T = \sigma^2 I \}$ represent the set of all pairs of $d$-dimensional distributions $P,Q$ whose means differ by $\delta \in S_d(\rho)$ and whose covariances are both $\sigma^2 I$. Define the minimax power at level $\alpha$ as
$$
\beta(\rho,\alpha,\sigma) := \inf_{\eta \in [\eta]_{n,d,\alpha}} \sup_{(P,Q) \in \mathcal{F}_d(\rho,\sigma)} \E_{P,Q} \eta.
$$

\begin{theorem}
Given $X_1,...,X_n \sim N(0,\sigma^2I_d)$ and $Y_1,...Y_n \sim N(\delta,\sigma^2I_d)$, suppose we want to test $\delta=0$ against $\delta \in S_d(\rho)$. Then putting $\Psi := \rho/\sigma$,
the minimax power is
$$
\beta(\rho,\alpha,\sigma)  = \Phi\left(-\frac{\sqrt{d}}{\sqrt{d+n\Psi^2}}T_\alpha + \frac{\Psi^2}{\sqrt{8d/n^2 + 8\Psi^2/n}} \right) +o(1)
$$
\end{theorem}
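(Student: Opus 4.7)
The plan is to reduce the $n$-sample two-sample Gaussian problem to an equivalent one-sample normal means problem via sufficient statistics, and then invoke Proposition \ref{prop:minimax} with an appropriately rescaled radius.

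First, observe that under the Gaussian model with common covariance $\sigma^{2} I_{d}$, the sample means $(\bar X, \bar Y)$ are jointly sufficient for $(\mu_{P}, \mu_{Q})$, and hence for $\delta = \mu_{Q} - \mu_{P}$. The difference $\bar Y - \bar X \sim N(\delta, \tfrac{2\sigma^{2}}{n} I_{d})$ is a one-to-one (and in fact sufficient) reduction of the testing problem, so by the Rao--Blackwell/sufficiency principle, the minimax power over $[\eta]_{n,d,\alpha}$ equals the minimax power over the class of tests that are functions of $\bar Y - \bar X$ alone.

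Next I rescale. Set
\[
Z := \sqrt{\tfrac{n}{2\sigma^{2}}} (\bar Y - \bar X), \qquad \text{so that} \qquad Z \sim N(\theta, I_{d}), \quad \theta := \sqrt{\tfrac{n}{2\sigma^{2}}}\, \delta.
\]
Under $H_{0}$ we have $\theta = 0$, and under $H_{1}$ we have $\|\theta\| = \sqrt{n/(2\sigma^{2})}\,\rho = \sqrt{n\Psi^{2}/2}$. Hence the induced problem is exactly the normal means testing problem of Proposition \ref{prop:minimax} with radius $\rho' := \sqrt{n\Psi^{2}/2}$. Applying that proposition yields
\[
\beta(\rho,\alpha,\sigma) = \Phi\!\left( -\frac{\sqrt{2d}}{\sqrt{2d + 4\rho'^{2}}}\, z_{\alpha} + \frac{\rho'^{2}}{\sqrt{2d + 4\rho'^{2}}} \right) + o(1).
\]

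Finally, I simplify by substituting $\rho'^{2} = n\Psi^{2}/2$, so that $2d + 4\rho'^{2} = 2d + 2n\Psi^{2} = 2(d + n\Psi^{2})$. The coefficient of $z_{\alpha}$ becomes $\sqrt{2d}/\sqrt{2(d+n\Psi^{2})} = \sqrt{d}/\sqrt{d+n\Psi^{2}}$, and the shift term becomes
\[
\frac{n\Psi^{2}/2}{\sqrt{2(d+n\Psi^{2})}} = \frac{n\Psi^{2}}{\sqrt{8(d + n\Psi^{2})}} = \frac{\Psi^{2}}{\sqrt{8d/n^{2} + 8\Psi^{2}/n}},
\]
which matches the advertised expression exactly (using $T_{\alpha}$ in place of $z_{\alpha}$ in the same sense as Proposition \ref{prop:minimax}).

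The only nonroutine step is the first one — justifying that reducing to the sufficient statistic does not enlarge the achievable minimax power. Since any randomized test $\eta(X_{1:n}, Y_{1:n})$ can be replaced by its conditional expectation given $(\bar X, \bar Y)$ without changing its type-I error (under $H_{0}$ the joint distribution of $(X_{1:n}, Y_{1:n})$ given $(\bar X, \bar Y)$ does not depend on unknown parameters) and without decreasing its power (under each $H_{1}$ the conditional distribution also does not depend on $\delta$ beyond what is already captured by $\bar Y - \bar X$, because $(\bar X, \bar Y)$ is sufficient for the full parameter), the infimum over $[\eta]_{n,d,\alpha}$ equals the infimum over tests that are functions only of $Z$. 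Everything else is a direct application of Proposition \ref{prop:minimax} plus algebra.
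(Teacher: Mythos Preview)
Your proposal is correct and takes essentially the same approach as the paper: construct the standardized difference $Z = \sqrt{n/(2\sigma^2)}(\bar Y - \bar X)$, recognize it as a normal-means observation with radius $\rho' = \sqrt{n\Psi^2/2}$, and substitute into Proposition~\ref{prop:minimax}. Your sufficiency paragraph is actually more careful than the paper, which simply writes down $Z$ and invokes the proposition without justifying why no test based on the full data can outperform tests based on $Z$.
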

\begin{proof}
Denote
$$Z = \sum_i \frac{X_i - Y_i}{\sqrt{2}\sigma \sqrt{n}} = \sqrt{n/2}\frac{(\bar X - \bar Y)}{\sigma}.$$
Under the null,
$$Z \sim N(0,I_d)$$
and under the alternate
$$Z \sim N(\delta,I_d)$$
for $\delta \in S_d(\rho')$, where $\rho' = \sqrt{n/2} \rho/\sigma$, i.e. $\rho'^2 = n\Psi^2/2$. Our claim follows by direct substitution into proposition \ref{prop:minimax}. 

\end{proof}

\begin{remark}
This lower bound expression \textit{exactly} matches the upper bound expression in Eq.\eqref{eq:powerdiag}, including matching constants, showing that \textit{all} of the discussed tests are minimax optimal in this setting of $\Sigma = \sigma^2 I$. Even though the current lower bounds can possibly be strengthened to include nondiagonal $\Sigma$, we remark that we have not been able to find even these diagonal-covariance lower bounds in the two sample testing literature, especially which are accurate even to constants.
\end{remark}


\section{Computation-Statistics Tradeoffs}\label{sec:compstat}

In this section we will consider computationally cheaper alternatives to computing the quadratic time $\GMMD^2$ that were suggested in \cite{mmd} and \cite{btest}, namely a block-based $\GMMD^2_B$ and a linear-time $\GMMD^2_L$. While it is clear that $\GMMD^2$ is the minimum variance unbiased estimator (it is a Rao-Blackwellized U-statistic), it is not clear \textit{how much} worse the other options are - if they are only slightly worse, the computational benefits could be worth it if there is a large amount of data. Due to the lack of a high-dimensional analysis in \cite{mmd}, it was inferred that one suffers for cheaper computation with power that is worse, by a constant factor compared to the power of $\GMMD^2$. 
We will show that, for MDA, the power is worse not by constants but by exponents of $n$ (presumably this would only get worse for GDA). At all points, the Assumptions in Section 3 are assumed to hold wherever needed, so that we can proceed directly to comparisons.

Assume that we divide the data into $B = B(n)$ blocks of size $n/B$ with $n/B \rightarrow \infty$. Let $\GMMD^2(b)$ be the $\GMMD^2$ statistic evaluated only on the samples in block $b \in \{1,...,B\}$, and let the block-based MMD be defined as
$$
\GMMD^2_B = \frac1{B}\sum_{b=1}^B  {\GMMD^2(b)}.
$$
We note that this statistic takes  $(n/B)^2 B = n^2/B$ time to compute.

Also, when using $B=n/2$, i.e. using blocks of size just $2$, since $n/B \to \infty$ does not hold, we look at this case separately. This statistic just takes linear-time to compute, since each block $b$ is just of size 2, and we define the linear time MMD as
\begin{equation}\label{eq:lmmd}
\GMMD^2_L = \frac1{n/2}\sum_{b=1}^{n/2}  {\GMMD^2(b)}.
\end{equation}

\begin{theorem}
Under assumptions [A1], [A2], [A3], [A4], [A5] (appropriately holding for $n/B$ points), and the bandwidth is chosen as $\gamma^2 = \omega(Tr(\Sigma))$,  the power of $\GMMD^2_B$ is
\[
\phi_{\GMMD}^B = \Phi\left( \frac{\sqrt B \|\delta\|^2}{{\sqrt{8\frac{B^2 Tr(\Sigma^2)}{n^2} + 8 \frac{B \delta^T \Sigma \delta }{n}}}} - z_\alpha \frac{\sigma_{B1}}{\sigma_B}\right) + o(1).
\]
\end{theorem}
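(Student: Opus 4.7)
The plan is to apply Theorem~\ref{thm:gmmd} within each of the $B$ disjoint blocks and then aggregate across blocks. Writing $n' := n/B$, each $\GMMD^2(b)$ is a $\GMMD^2$ statistic on $n'$ samples from each of $P,Q$, and the assumptions [A1]--[A5] are taken to hold at block size $n'$ with bandwidth $\gamma^2 = \omega(Tr(\Sigma))$. Hence the proof of Theorem~\ref{thm:gmmd} applies blockwise: up to a multiplicative bandwidth constant $c_\gamma$ that cancels in the final standardized statistic, $\GMMD^2(b)$ is asymptotically equivalent to $c_\gamma U_{CQ}(b)$, and Theorem~\ref{thm:cq} then gives that each block statistic is asymptotically Gaussian with mean $\|\delta\|^2$, null variance $\sigma_{n'1}^2 = 8 B^2 Tr(\Sigma^2)/n^2$, and alternative variance $\sigma_{n'1}^2 + \sigma_{n'2}^2$ where $\sigma_{n'2}^2 = 8 B\, \delta^T \Sigma \delta/n$.

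Because the blocks are formed from disjoint samples, the per-block statistics $\{\GMMD^2(b)\}_{b=1}^B$ are mutually independent, so $\GMMD^2_B = (1/B)\sum_b \GMMD^2(b)$ has mean $\|\delta\|^2$ and variances $\sigma_{B1}^2 := \sigma_{n'1}^2/B = 8 B\, Tr(\Sigma^2)/n^2$ under $H_0$ and $\sigma_B^2 := \sigma_{B1}^2 + 8\, \delta^T \Sigma \delta / n$ under $H_1$. Asymptotic normality of $\GMMD^2_B$ follows directly from independence when $B$ is fixed, or from a triangular-array Lindeberg--Feller CLT (supported by the moment assumption [A2]) when $B \to \infty$. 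The test rejects when $\GMMD^2_B/\sigma_{B1} > z_\alpha$; standardizing under $H_1$ using the Gaussian approximation yields
$$
\phi_{\GMMD}^B = \Phi\!\left(\frac{\|\delta\|^2}{\sigma_B} - z_\alpha\, \frac{\sigma_{B1}}{\sigma_B}\right) + o(1),
$$
and multiplying the signal term's numerator and denominator by $\sqrt{B}$ converts $\|\delta\|^2/\sigma_B$ into $\sqrt{B}\, \|\delta\|^2 / \sqrt{8 B^2 Tr(\Sigma^2)/n^2 + 8 B\, \delta^T \Sigma \delta/n}$, exactly matching the stated expression.

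The main obstacle is the aggregation step when $B$ is allowed to grow with $n$. The $o_P$ remainders arising from the Taylor expansion inside Theorem~\ref{thm:gmmd}'s proof (in particular the $U_4/\gamma^4$ contribution and its analogues) are controlled only marginally at block scale, so averaging them over $B$ growing blocks requires uniform-in-$n'$ probability rates strong enough that $(1/B)\sum_b r(b) = o_P(\sigma_{B1})$; likewise, verifying the Lindeberg condition across the $B$-indexed triangular array of centered block statistics needs uniform bounds on their third or fourth absolute moments, which should follow from [A2] together with the same variance calculations. Both issues reduce to moment computations parallel to those already carried out for Theorems~\ref{thm:cq} and~\ref{thm:gmmd}, but they are precisely the place where the proof must do more than invoke Theorem~\ref{thm:gmmd} as a black box.
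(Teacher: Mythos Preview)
Your proposal is correct and follows essentially the same route as the paper: apply Theorem~\ref{thm:gmmd} blockwise to get asymptotic normality of each $\GMMD^2(b)$ with per-block null/alternative variances, use independence across blocks to obtain the Gaussian limit for the average $\GMMD^2_B$, and then carry out the standard power calculation. Your definitions of $\sigma_{B1}^2,\sigma_B^2$ differ from the paper's by a common factor (you use variances of the average, the paper uses per-block variances scaled by $\gamma^{-4}$), but since only the ratio $\sigma_{B1}/\sigma_B$ enters the theorem this is immaterial; if anything, your explicit flag about the Lindeberg--Feller step and the uniform control of Taylor remainders for growing $B$ is more careful than the paper's own argument, which simply asserts the aggregated normality without elaboration.
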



\begin{proof}
 Let $\sigma_{B1}^2$ and $\sigma_{B2}^2$ be as defined in Eqs.\eqref{eq:sigma1},\eqref{eq:sigma2}, but each calculated on $n/B$ points instead of $n$ points, and scaled by $\gamma^4$, i.e.
\begin{eqnarray*}
\sigma_{B1}^2 &:=& 8\frac{B^2 Tr(\Sigma^2)}{\gamma^4 n^2} \\
\sigma_{B2}^2 &:=& 8 \frac{B \delta^T \Sigma \delta }{\gamma^4 n}.
\end{eqnarray*}
 Define $\sigma_B^2 = \sigma_{B1}^2 + \sigma_{B2}^2$. Then from our earlier arguments we have that
 \begin{eqnarray}
\text{ Under $H_0$, ~}~  \GMMD^2(b) &\rightsquigarrow & N(0,\sigma_{B1}^2),\\
\text{ Under $H_1$, ~}~  \GMMD^2(b) &\rightsquigarrow & N(0,\sigma_{B1}^2 + \sigma_{B2}^2).
 \end{eqnarray}

 Hence, the distribution of $\GMMD_B^2$ is $N(0,\sigma_{B1}^2/B)$ under null and $N(\GMMD^2, \sigma^2_B/B)$ under alternative. Hence, from our earlier results it is straightforward to note that
under $H_0$,
$$\sqrt B \frac{\GMMD_B^2}{\sigma_{B1}} \rightsquigarrow N(0,1)$$
 and under $H_1$,
 $$\sqrt{B}\frac{\GMMD_B^2 - \GMMD^2}{\sigma_B} \rightsquigarrow N(0,1).$$

Hence our test statistic will be
$$
T_B := \sqrt B \frac{\GMMD_B^2}{\sigma_1}
$$
with our test being given by $\mathbb{I}(T_B > z_\alpha)$ where $z_\alpha$ is the $\alpha$ quantile cutoff of the standard normal distribution. Note that in practice, we would simply use a studentized statistic by plugging in the estimated $\sigma_1$. Then, the power of this test is
\begin{eqnarray}
P_{H1}\left(\sqrt B \frac{\GMMD^2_B}{\sigma_{B1}} > z_\alpha \right) &=&  P_{H1} \left( \sqrt{B}\frac{\GMMD_B^2 - \GMMD^2}{\sigma_B} > z_\alpha \frac{\sigma_{B1}}{\sigma_B} - \frac{\sqrt B \GMMD^2}{\sigma_B} \right)\\
&=& 1-\Phi\left(z_\alpha \frac{\sigma_{B1}}{\sigma_B} - \frac{\sqrt B \GMMD^2}{\sigma_B} \right)\\
&=& \Phi\left( \frac{\sqrt B \|\delta\|^2}{{\sqrt{8\frac{B^2 Tr(\Sigma^2)}{n^2} + 8 \frac{B \delta^T \Sigma \delta }{n}}}} - z_\alpha \frac{\sigma_{B1}}{\sigma_B}\right).
\end{eqnarray}

\end{proof}

It is again useful to consider the case of $\Sigma = \sigma^2 I$ for some insight, and recall $\Psi = \|\delta\|/\sigma$.  
Specifically, the power  is
\begin{equation}
\phi^B_L =  \Phi\left(\frac{n\Psi^2}{\sqrt {8Bd}} - z_\alpha \right) \mbox{~ when ~} \Psi = o(\sqrt{Bd/n})\label{eq:lowpowerB}
\end{equation}
while in the \textit{very} high SNR regime, the power behaves like
\begin{equation}
\phi^B_H = \Phi(\sqrt n \Psi /\sqrt 8) \mbox{~ when ~} \Psi = \omega (\sqrt{Bd/n}). \label{eq:highpowerB}
\end{equation}

Of course, the above two rates match in the Medium SNR regime. Here we use the italicized \textit{very} because it is a $\sqrt{B}$ times larger SNR requirement than the high SNR regime given in Eq.\eqref{eq:highSNR} of $\Psi = \omega(\sqrt{d/n})$.
Comparing to Eqs.\eqref{eq:lowpower},\eqref{eq:highpower} to the ones above, in the very high SNR regime i.e. $\Psi = \omega (\sqrt{Bd/n})$, we have
$$
\phi_H^B = \phi_H.
$$
However, the low SNR regime is statistically more interesting. In this case, the power of the block test is $\sqrt{B}$ times worse (inside the $\Phi$ transformation). Noting that the block based test takes time $n^2/B$ to compute, we see the factor $n/\sqrt{B}$ in Eq.\eqref{eq:lowpowerB} quite illuminating (it is the square-root of the time taken). 

It was proved in \cite{powermmd2} that the power of the linear-time statistic is given by
$$
\Phi\left(\frac{\sqrt n\Psi^2}{\sqrt{8d + 8\Psi^2}} - z_\alpha \right)
$$
and hence its power in the low SNR regime is given by $\Phi\left( \frac{\sqrt n}{\sqrt {8d}} \Psi^2 \right)$ in the (very very) high SNR regime of $\Psi = \omega(\sqrt d)$, its power does not suffer, and is exactly $\Phi(\sqrt n \Psi/\sqrt 8)$ like all the above statistics, but in the low SNR regime its dependence on $n$ suffers (and again it is the square-root of the computation time taken). 

\begin{remark}
We can summarize this section informally as follows. If the test statistic takes time $n^{t}$ to compute for $1 \leq t \leq 2$ then the power behaves like $\Phi\left( \frac{n^{t/2} \Psi^2}{\sqrt {8d}} \right)$ in the low SNR regime.
\end{remark}

\section{Experiments}\label{sec:expt}

In our experience, our claimed theorems hold true much more generally in practice. For example:
\begin{enumerate}
\item While we need $n,d$ to be polynomially related in theory, we find that our experiments show that $\phi_{CQ} = \phi_{\EED} = \phi_{\GMMD}$ even when $n$ is fixed and $d$ increases, or when $d$ is fixed and $n$ increases.
\item While our theory seems to suggest that $\gamma^2 = \omega(Tr(\Sigma))$ is needed, the experiments suggest that $\gamma^2 = \Omega(Tr(\Sigma))$ suffices.
\end{enumerate}

Before we describe our experimental suite, let us first detour to mention the ``median heuristic''.

\subsection{The Median Heuristic}

The median heuristic chooses the bandwidth for the Gaussian kernel as the median pairwise distance between all pairs of points (see \cite{learningkernels}). In other words, it chooses
$$
\gamma^2 = \text{Empirical Median} \left\{ \|S - S'\|^2 \right\}
$$
where $S \neq S' \in \{X_1,...,X_n,Y_1,...,Y_n\}$.
 To have some idea of the order of magnitude of the choice that median heuristic makes, let us make the reasonable supposition that this choice  is similar to the \textit{mean}-heuristic, which chooses it to be the average distance between all pairs of points, i.e. let us assume for argument's sake that
 $$
 \text{Empirical Median} \left\{ \|S - S'\|^2 \right\} \asymp \text{Population Mean} \left\{ \|S - S'\|^2 \right\} .
 $$
Then the following proposition captures the order of magnitude of the bandwidth choice made by the common median heuristic.

\begin{proposition}\label{prop:med-heur}
Under [A1], the average distance between all pairs of points is $\asymp  2Tr(\Sigma)$. Hence, under [A1], the median-heuristic chooses $\gamma^2 \asymp 2Tr(\Sigma)$.
\end{proposition}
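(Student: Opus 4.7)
The plan is to first compute the population mean of the squared distance between two independent sample points, for each of the three pair-types (both from $P$, both from $Q$, one from each), then aggregate over all $\binom{2n}{2}$ pairs, and finally pass from population expectation to the empirical average via the concentration supplied by [A3b].

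First I would use [A1] to write $S - S' = \Gamma(Z - Z') + \Delta$, where $Z, Z'$ are independent $D$-dimensional vectors with zero mean and identity covariance, and $\Delta \in \{0, \pm \delta\}$ depending on which samples $S$ and $S'$ are drawn from. Expanding $\|S-S'\|^2 = \|\Gamma(Z-Z')\|^2 + 2\Delta^T \Gamma(Z-Z') + \|\Delta\|^2$ and taking expectation, the cross term vanishes because $Z, Z'$ are zero-mean, and the first term gives
\[
\E\|\Gamma(Z-Z')\|^2 = \mathrm{Tr}\bigl(\Gamma^T \Gamma \cdot \E[(Z-Z')(Z-Z')^T]\bigr) = 2\,\mathrm{Tr}(\Gamma^T \Gamma) = 2\,\mathrm{Tr}(\Sigma),
\]
using $\Gamma \Gamma^T = \Sigma$ and independence with identity covariance. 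Thus the within-sample pairs contribute $2\mathrm{Tr}(\Sigma)$ in expectation, while the between-sample pairs contribute $2\mathrm{Tr}(\Sigma) + \|\delta\|^2$.

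Next I would aggregate. There are $2\binom{n}{2}$ within-sample pairs and $n^2$ between-sample pairs, out of a total of $n(2n-1)$ pairs, so the expected empirical average equals
\[
2\,\mathrm{Tr}(\Sigma) + \frac{n^2}{n(2n-1)}\|\delta\|^2 = 2\,\mathrm{Tr}(\Sigma)\bigl(1 + o(1)\bigr),
\]
where the $o(1)$ follows because [A4] forces $\|\delta\|^2 = o(\mathrm{Tr}(\Sigma))$ (as already noted in the remark after [A4], which gives $\|\delta\|^2 = o(\mathrm{Tr}(\Sigma))$). Hence the population mean of the pairwise squared distance is $\asymp 2\mathrm{Tr}(\Sigma)$.

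Then I would pass from population expectation to empirical average. Assumption [A3b] provides exponential concentration of each individual pairwise squared distance around its mean $\E\|S-S'\|^2/d$ at rate $d^{-\nu}$; combining this with a union bound over the $O(n^2)$ pairs (which is absorbed by the exponential rate as long as $n$ is polynomial in $d$ per [A5]) shows that with overwhelming probability every pairwise squared distance lies within $d^{1-\nu}$ of its mean, so the empirical average is $2\mathrm{Tr}(\Sigma)(1+o(1))$ as well. Finally, invoking the supposition made in the paragraph preceding the proposition, that the empirical median of pairwise squared distances is of the same order as the population mean, we conclude $\gamma^2 \asymp 2\mathrm{Tr}(\Sigma)$.

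The main obstacle is the last step: dependence between pairs that share a common point prevents a naive Hoeffding bound on the empirical average, but [A3b] sidesteps this by giving a uniform concentration on individual terms rather than requiring independence across pairs. A secondary subtlety is the median-versus-mean gap, which the paper itself handles only heuristically; any rigorous strengthening would require additional control on the empirical distribution function of $\{\|S-S'\|^2\}$ near its median, which is beyond the scope of a proposition explicitly labeled as capturing the order of magnitude.
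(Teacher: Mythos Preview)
Your proof is correct and the core computation---deriving $\E\|X-X'\|^2 = 2\mathrm{Tr}(\Sigma)$ and $\E\|X-Y\|^2 = 2\mathrm{Tr}(\Sigma) + \|\delta\|^2$, then aggregating over the three pair-types---matches the paper's exactly, though you use the $\Gamma(Z-Z')$ decomposition while the paper expands via the trace trick $\E(X-\mu)^T(X-\mu) = \mathrm{Tr}(\Sigma)$.

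Where you diverge is in scope. The paper's proof stops after computing the \emph{population mean} pairwise squared distance and simply says ``together, these imply our claim.'' It does not invoke [A3b] or [A5] at all: the passage from population mean to empirical median is handled not inside the proof but by the explicit supposition stated in the paragraph immediately preceding the proposition (``let us assume for argument's sake that Empirical Median $\asymp$ Population Mean''). So your concentration step via [A3b] and the union bound is additional rigor beyond what the paper attempts; it is not wrong, but the paper treats the proposition as a pure population-level calculation under that standing heuristic assumption. Similarly, you correctly flag that [A4] is needed to absorb the $\|\delta\|^2$ term into the $o(1)$, which the paper's terse ``these imply our claim'' glosses over despite the proposition nominally citing only [A1]. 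Your version is the more careful of the two.
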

\begin{proof}
There are $\binom{n}{2}$ pairs of $x$s and $\binom{n}{2}$ pairs of $y$s and $n^2$ $xy$ pairs, the total number of pairs being $\binom{2n}{2}$. This implies that the population mean pairwise distance is
$\frac{\binom{n}{2}}{\binom{2n}{2}}\E \|X - X'\|^2 +\frac{\binom{n}{2}}{ \binom{2n}{2}}\E \|Y - Y'\|^2 + \frac{n^2}{\binom{2n}{2}} \E \|X-Y \|^2 $. 
\begin{eqnarray*}
\E \|X-X'\|^2 &=& \E \|(X-\mu_1) - (X'-\mu_1)\|^2 = 2 \E (X-\mu_1)^T(X-\mu_1)\\
 &=& 2 \E Tr((X-\mu_1)(X-\mu_1)^T) = 2 Tr(\Sigma).
\end{eqnarray*}
 \begin{eqnarray*}
\E \|X-Y\|^2 &=& \E \|X\|^2 + \E \|Y\|^2 - 2\E X^T Y\\
&=& \E \|X-\mu_1\|^2 + \|\mu_1\|^2 + \E \|Y - \mu_2\|^2  + \|\mu_2\|^2 - 2 \mu_1^T \mu_2\\
&=& 2Tr(\Sigma) + \|\delta\|^2.
\end{eqnarray*}
Together, these imply our claim.

\end{proof}


\begin{remark}
The above proposition implies that the choice made by the median heuristic is at the borderline of satisfying the condition under which our main theorem holds, which is $\gamma^2 = \omega(Tr(\Sigma))$. Practically, in our experiments that follow, it seems like all the claims still seem to hold even when $\gamma^2 \asymp Tr(\Sigma)$. This implies that the conditions currently needed for our theory are possibly stronger than needed. Hence, this ``heuristic'' actually provides a reasonable default bandwidth choice since $\Sigma$ is usually unknown.
\end{remark}

\subsection{Practical accuracy of our theory}

Here, we consider a wide variety of experiments and demonstrate that our claims hold true with great accuracy in practice, and actually in greater generality than we can currently prove.

The different test statistics considered in this simulation suite (as given in the legends) are:
\begin{enumerate}
\item \textbf{uMMD0.5} - $\GMMD$ with $\gamma \asymp d^{0.5}$ i.e. $\gamma^2 \asymp Tr(\Sigma)$.
\item \textbf{uMMD Median} - $\GMMD$ with $\gamma$ chosen by the aforementioned median heuristic.
\item \textbf{uMMD0.75} - $\GMMD$ with $\gamma \asymp d^{0.75}$ i.e. $\gamma^2 = \omega( Tr(\Sigma))$.
\item \textbf{ED} - (Euclidean) energy distance $\EED$, i.e $\EED_\gamma$ with $\gamma^2 = 2Tr(\Sigma)$.
\item \textbf{uCQ} - The U-statistic $U_{CQ}$ from \cite{cq}.
\item \textbf{lMMD$\#$} - The linear-time $\GMMD_L^2$ statistic from Eq.\eqref{eq:lmmd} with $\# \in \{0.5,0.75,\text{ Median}\}$ specifying the bandwidth as in the case of $\GMMD$ above.
\item \textbf{lCQ} - The linear-time version of $U_{CQ}$.
\end{enumerate}

 We plot the power of all these tests statistics when $\alpha = 0.05$, for various $P,Q$ by running 100 repetitions of the two sample test for each parameter setting. 
As a one sentence summary of all the experiments that follow, we find that all the U-statistics have exactly the same power under mean-differences, as claimed by our theorems, i.e. $\phi_{CQ} = \phi_{\GMMD} = \phi_{ED}$ for all the above choices of bandwidth, while the linear-time statistics perform significantly worse, also as predicted by the theory (demonstrating the computation-statistics tradeoff).

%

%

\paragraph{Experiment 1.} For this experiment we use the following distributions. We vary $d$ from 40 to 200 and always draw $n=d$ samples from the corresponding $P,Q$.
\begin{itemize}
\item Normal distribution with diagonal covariance: $P = N(\mu_0, I_{d \times d})$ and $Q = N(\mu_1, I_{d \times d})$ where $\mu_0 = (0 \dots 0)^\top$ and $\mu_1 = \frac{1}{\sqrt{d}} (1 \dots 1)^\top$.
\item Product of Laplace distributions: $P$ and $Q$ are shifted Laplace distributions with shifts $\mu_0 = (0 \dots 0)^\top$ and $\mu_1 = \frac{1}{\sqrt{d}} (1 \dots 1)^\top$ respectively and identity covariance matrix.
\item Product of Beta distributions: $P$ and $Q$ are shifted Beta distributions $\textsc{Beta}(1,1)$ with shifts $\mu_0 = (0 \dots 0)^\top$, $\mu_1 = \frac{1}{\sqrt{12d}} (1 \dots 1)^\top$ respectively and identity covariance matrix.
\item Mixture of Gaussian distributions: $P$ and $Q$ are shifted mixture of Gaussians $\frac{1}{3} N(0, I_{d \times d}) + \frac{1}{3} N(0, 2 I_{d \times d})+ \frac{1}{3} N(0, 3 I_{d \times d})$ with shifts $\mu_0 = (0 \dots 0)^\top$ and $\mu_1 = \sqrt{\frac{2}{d}}$ respectively.
\end{itemize}

\begin{figure} [h!]
\centering
\includegraphics[width=0.48\linewidth]{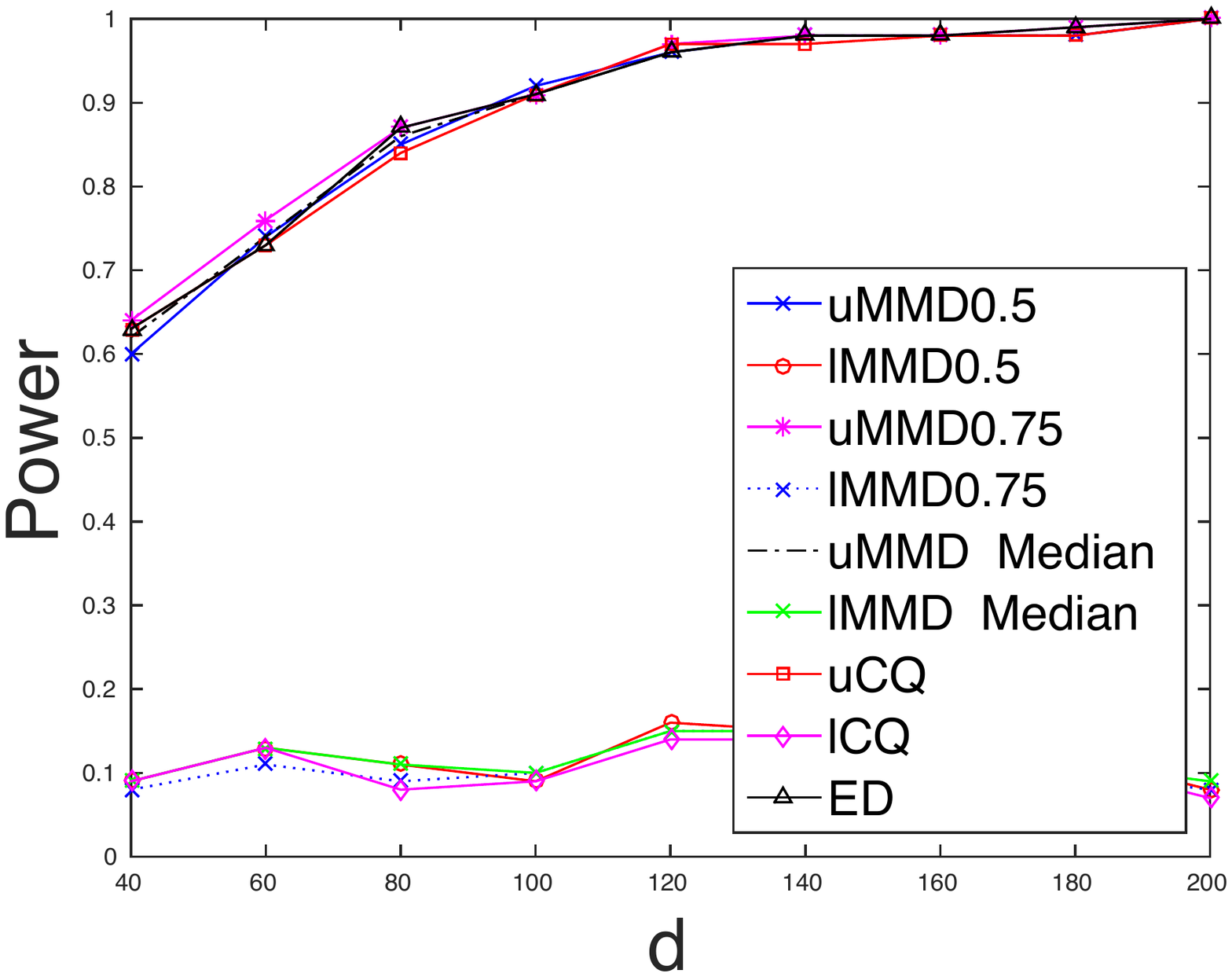}
\includegraphics[width=0.48\linewidth]{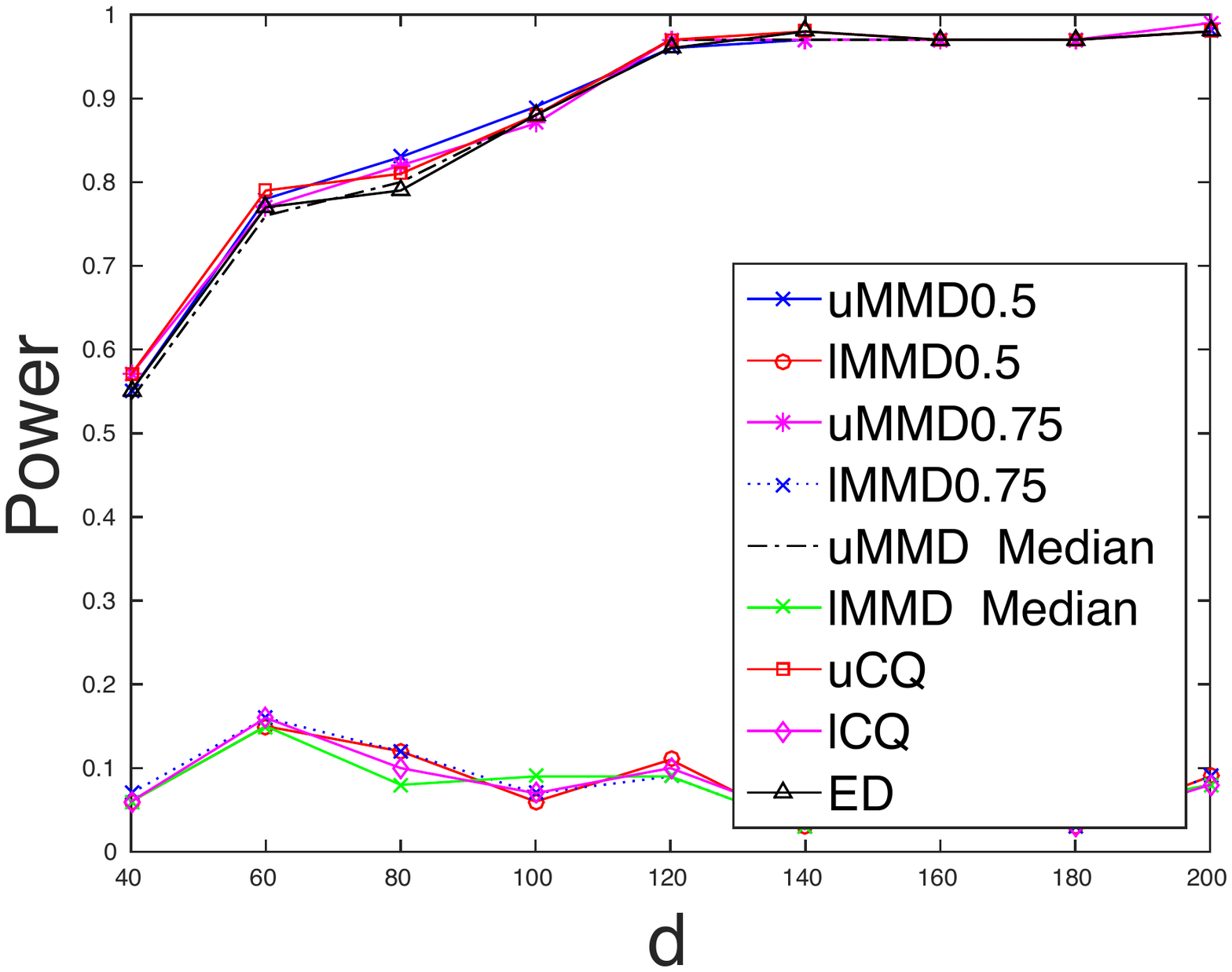}
\includegraphics[width=0.48\linewidth]{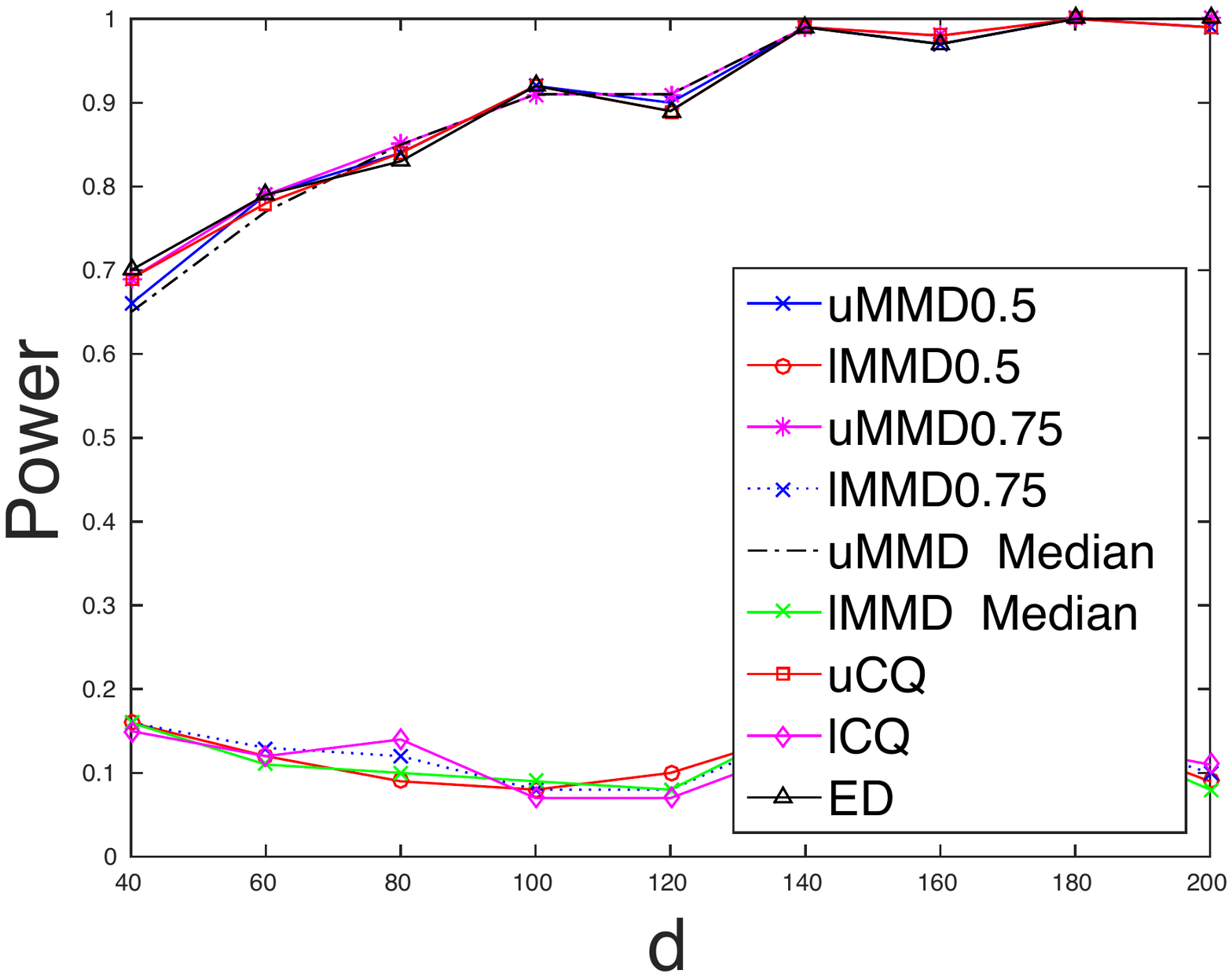}
\includegraphics[width=0.48\linewidth]{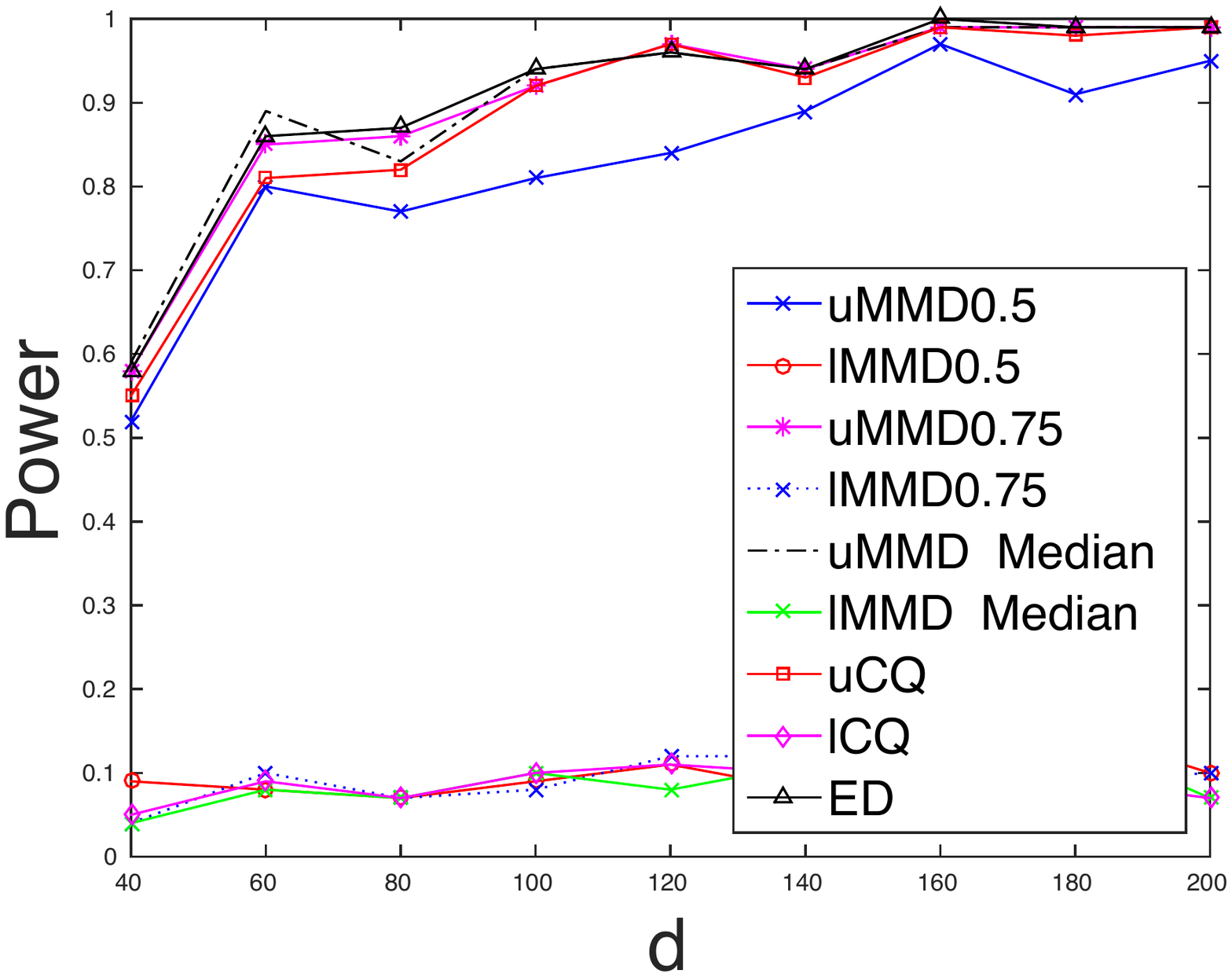}
\caption{Power vs Dimension when $P,Q$ are mean-shifted Normal (top left), Laplace (top right), Betas (bottom left) or Mixture (bottom right plot) distributions.}
\label{fig:exp1}
\end{figure}

The values of shifts and covariance matrix are chosen to keep the asymptotic power same for all the distribution (see Theorem~\ref{thm:gmmd}). Figure~\ref{fig:exp1} shows the performance of various estimators for the aforementioned two sample test settings. It is clear that the power of $\EED, T_{CQ},\GMMD$ all coincide for any (sufficiently large) bandwidth, increasing as $\Phi(\sqrt n)$ for the quadratic time statistic, and staying constant for the linear time statistics, both as predicted by the theory. Also note the fact that the plots look almost identical is consistent with our theory (see Theorem~\ref{thm:gmmd}).

{\bf Experiment 2}: In the previous experiment, we have seen the performance of the estimators for diagonal covariance matrix. Here, we empirically verify that similar effects can be observed in distributions with non-diagonal covariance matrix. To this end, we consider distributions $P = N(\mu_0, \Sigma')$ and $Q = N(\mu_1, \Sigma')$ where $\mu_0 = (0 \dots 0)^\top$, $\mu_1 = \frac{1}{\sqrt{d}}(1 \dots 1)^\top$ and $\Sigma' = U \Lambda' U^\top$. The matrix $U$ is a random unitary matrix $U$ obtained from the eigenvectors of a random Gaussian matrix. $\Lambda'$ is set as follows. Let $\Lambda$ be a diagonal matrix, the entries of which are equally spaced between 0.01 and 1, raised to the power 6. This experimental setup is similar to one used in \cite{lopes11}. The matrix $\Lambda'$ is $d\frac{\Lambda}{tr(\Lambda)}$. Figure~\ref{fig:exp2} shows that the qualitative performance of all statistics is similar to one observed in the previous experiment (see Figure~\ref{fig:exp1}).

\begin{figure} [h!]
\centering
\includegraphics[width=0.5\linewidth]{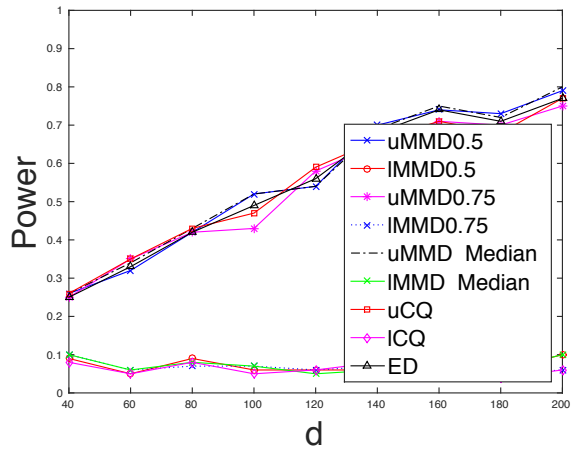}
\caption{Power vs $d$ when $P,Q$ are mean-shifted Normal (top left) with non-diagonal covariance matrix.}
\label{fig:exp2}
\end{figure}

%

\paragraph{Experiment 4.} The aim of this experiment is to study the performance of the statistics when distributions differ in covariances rather than means. In this experiment, we set $P = N(0,\Sigma_1)$ and $Q = N(0,\Sigma_2)$ where $\Sigma_1 = \frac{50 I}{\|\Sigma\|_F}$ and $\Sigma_2 = \frac{50 (\Sigma + I)}{\|\Sigma\|_F}$. Here, $\Sigma$ is a positive definite matrix $U \Lambda U^\top$ where $U$ and $\Lambda$ are generated as described in Experiment 2. Again, the experimental setup is similar to the one used in \cite{lopes11}. Not surprisingly, as seen in Figure~\ref{fig:exp4}, $\GMMD$ and $\EED$ perform better than CQ.

\begin{figure} [h!]
\centering
\includegraphics[width=0.45\linewidth]{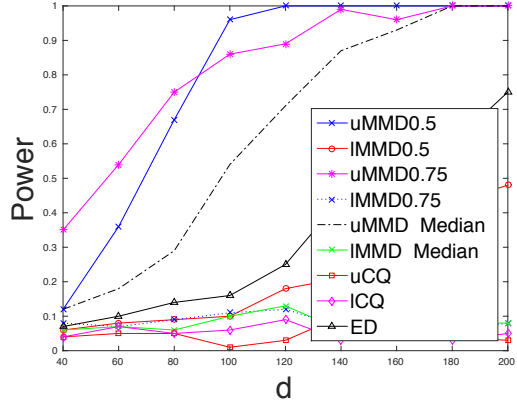}
\caption{Power vs $d$ when $P,Q$ are distributions differing in Covariances.}
\label{fig:exp4}
\end{figure}

 This experiment demonstrates that $\GMMD$ and $\EED$ \textit{dominate} $U_{CQ}$ in some sense. This is due to the fact that CQ is designed for mean-shift alternatives while rest of them work for more general alternatives. Hence, they achieve the same power when the distributions differ in their means, and strictly higher power when the distributions do not differ in their means, but only in some higher moment. We can also see that the powers of the different statistics are no longer equal, and that the bandwidth does matter in this situation.

\paragraph{Experiment 5.} Finally, we verify the nature of the asymptotic power for fixed dimension. For the purpose of this experiment, we hold $d$ fixed to value 40 and vary $n$. Here, we consider two sample tests for normal distributions with diagonal and non-diagonal covariance matrices (used in Experiment 1 and Experiment 2 respectively). Figure~\ref{fig:exp5} illustrates the power of the tests under this scenario. It can be seen that power increases with $n$ in a manner similar to the ones observed in the previous experiments.

\begin{figure} [h!]
\centering
\includegraphics[width=0.45\linewidth]{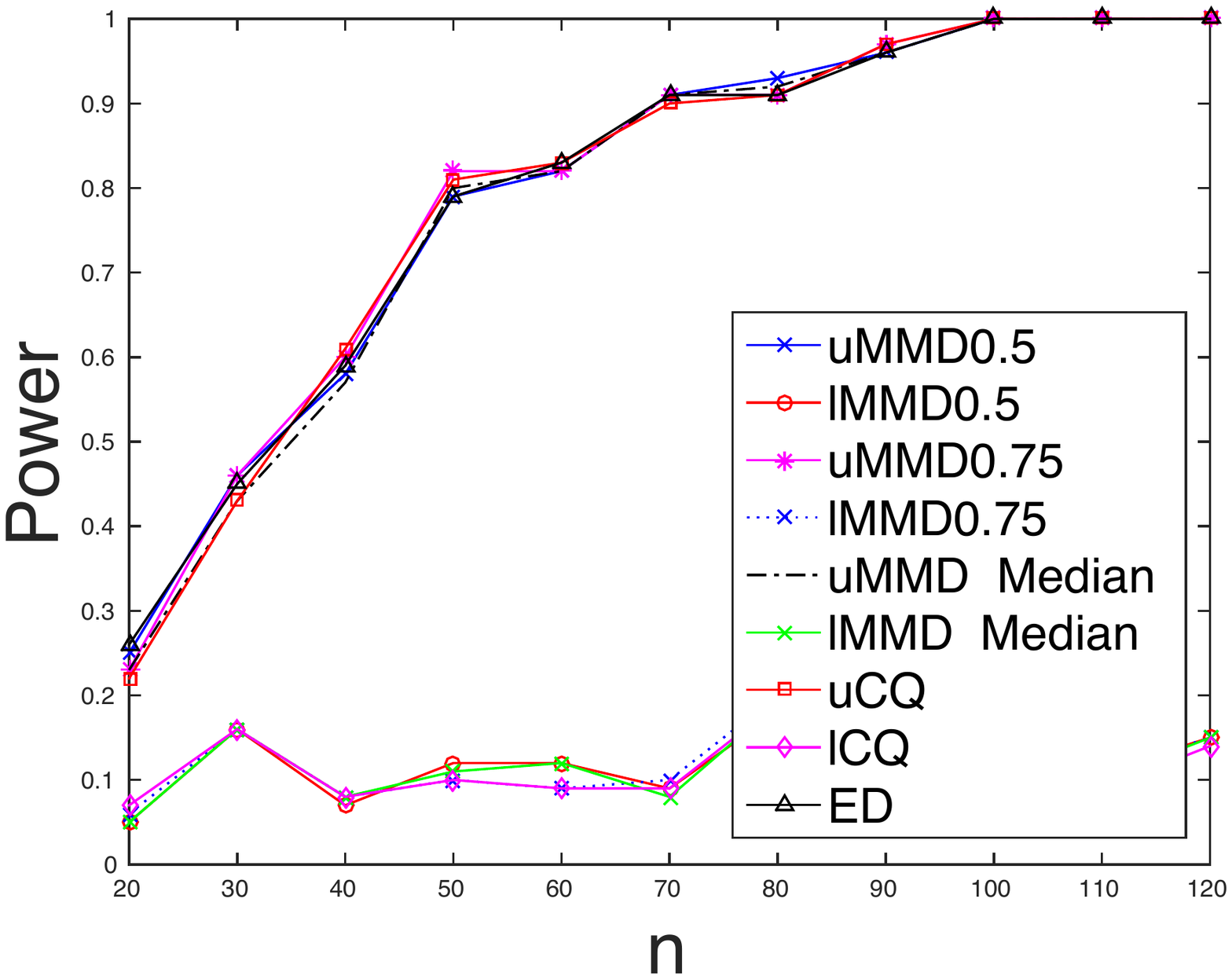}
\includegraphics[width=0.45\linewidth]{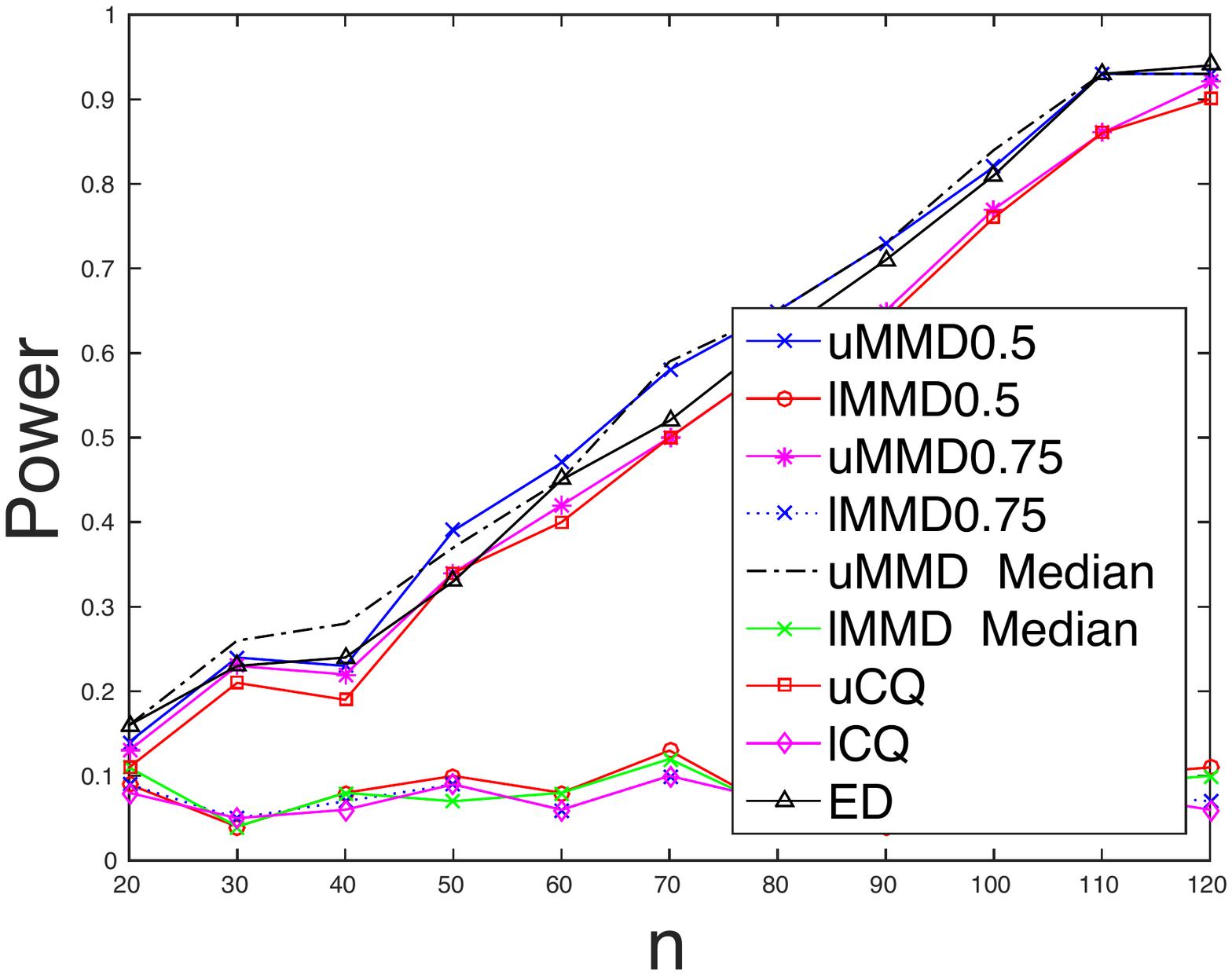}
\caption{Power vs Sample size for fixed dimension when $P,Q$ are normal distributions with diagonal (left plot) and non-diagonal (right plot) covariance matrices respectively.}
\label{fig:exp5}
\end{figure}

This experiment suggests that assumption [A5] can probably be relaxed or dropped from the theory. We need it only to bound a certain Taylor remainder term $R_3$ in the proof of the theorems that follows, and it is perhaps possible to find a better way to bound this term.

%


\section{Proofs of Theorems \ref{thm:gmmd} and \ref{thm:eed}}\label{sec:proofs}

Let us first note that the $\GMMD$ statistic can be  written as 

\begin{eqnarray}
\GMMD &=& 
\begin{bmatrix}
\textbf{1}_n/\sqrt{n(n-1)}\\
-\textbf{1}_n/\sqrt{n(n-1)}
\end{bmatrix}^T
 \begin{bmatrix}
K_{XX} & K_{XY} \\
K_{XY}^T & K_{YY}
\end{bmatrix}
\begin{bmatrix}
\textbf{1}_n/\sqrt{n(n-1)}\\
-\textbf{1}_n/\sqrt{n(n-1)}
\end{bmatrix}\nonumber \\
&=& 
\frac{2}{(n-1)} \cdot 
u^T
K
u \label{eq:uTKu}
\end{eqnarray}

where $ u = \begin{bmatrix}
\textbf{1}_n/\sqrt{2n}\\
-\textbf{1}_n/\sqrt{2n}
\end{bmatrix} $ is a unit vector and $K =  \begin{bmatrix}
K_{XX} & K_{XY} \\
K_{YX} & K_{YY}
\end{bmatrix}$
with its submatrices defined as

\begin{eqnarray*}
K_{XX } &:=& 
\left\{
\exp\left(-\frac{\|X_i - X_j\|^2}{\gamma^2}  \right) \mathbb{I}(i \neq j) 
\right\}\\
&:=&
\begin{bmatrix}
0 & \exp\left(-\frac{\|X_1 - X_2\|^2}{\gamma^2} \right) & \cdots & \exp\left(-\frac{\|X_1 - X_n\|^2}{\gamma^2}\right)\\
\exp\left(-\frac{\|X_2 - X_1\|^2}{\gamma^2}\right) & 0 & \cdots & \exp\left(-\frac{\|X_2 - X_n\|^2}{\gamma^2}\right)\\
\vdots & \vdots  & \ddots & \vdots\\
\exp\left(-\frac{\|X_n - X_1\|^2}{\gamma^2}\right) & \exp\left(-\frac{\|X_n - X_2\|^2}{\gamma^2}\right) & \cdots & 0
\end{bmatrix}
\end{eqnarray*}

and we use the first expression to summarize the above matrix and similarly,

\[
K_{YY } 
= 
\left\{
\exp\left(-\frac{\|Y_i - Y_j\|^2}{\gamma^2}\right) \mathbb{I}(i \neq j) 
\right\}
\]

\[
K_{XY } = K_{YX}^T = 
\left\{
\exp\left(-\frac{\|X_i - Y_j\|^2}{\gamma^2}\right) \mathbb{I}(i \neq j) 
\right\}
\]

Note that there are 0s on the diagonal of $K$, but also on the diagonals of the other two submatrices.
Note that $2Tr(\Sigma) + \|\delta\|^2 = \E\|X_i - Y_j \|^2 \asymp \E\|X_i - X_j\|^2 = \E\|Y_i - Y_j \|^2 = 2Tr(\Sigma)$ since $\|\delta\|^2 = o(Tr(\Sigma))$ by Assumption [A4]. For $i \neq j$, let 
\begin{equation}\label{eq:tau}
\tau := 2Tr(\Sigma)/\gamma^2  \asymp  \E\|S_i - S_j\|^2/\gamma^2 = o(1)
\end{equation}
 for $S_i \in \{X_i,Y_i\}$. Let $a=\frac{\|S_i - S_j\|^2}{\gamma^2}$ Let us write the exact third order Taylor expansion of the terms $\exp(-a)$ around $\exp(-\tau)$ as
\begin{eqnarray}
e^{-a} &=& e^{-\tau} - e^{-\tau} ( a - \tau ) 
 + \frac{e^{-\tau}}{2} \left( a - \tau \right)^2  - \frac{e^{-\zeta_{ij}}}{3!} \left( a - \tau \right)^3 \label{eq:gmmdtaylor}
\end{eqnarray}
for some $\zeta_{ij}$ between $a$ and $\tau$, and since $a,\tau > 0$, we have
$\exp(-\zeta_{ij}) \leq 1$. For clarity in the following expressions, we drop the $\mathbb{I}(i \neq j)$ and assume it is understood. In this notation, the term-wise Taylor expansion of $K$ is given by
\begin{eqnarray*}
K &=& \begin{bmatrix}
\left\{ e^{-\frac{\|X_i - X_j\|^2}{\gamma^2}} \right\} & 
\left\{ e^{-\frac{\|X_i - Y_j\|^2}{\gamma^2}} \right\}
\\
\left\{ e^{-\frac{\|Y_i - X_j\|^2}{\gamma^2}} \right\} & 
\left\{ e^{-\frac{\|Y_i - Y_j\|^2}{\gamma^2}} \right\}
\end{bmatrix}\\
&=& 
e^{-\tau}  \begin{bmatrix}
\left\{ 1 \right\} & \left\{ 1 \right\}
\\
\left\{ 1 \right\} & \left\{ 1 \right\}
\end{bmatrix}
- 
e^{-\tau}
\begin{bmatrix}
\left\{ \frac{\|X_i - X_j\|^2}{\gamma^2} - \tau \right\} & \left\{  \frac{\|X_i - Y_j\|^2}{\gamma^2} - \tau \right\}
\\
\left\{  \frac{\|Y_i - X_j\|^2}{\gamma^2} - \tau \right\} & \left\{  \frac{\|Y_i - Y_j\|^2}{\gamma^2} - \tau \right\}
\end{bmatrix}\\
&& + \frac{e^{-\tau}}{2!}
\begin{bmatrix}
\left\{ \left( \frac{\|X_i - X_j\|^2}{\gamma^2} - \tau \right)^2 \right\} & 
\left\{  \left( \frac{\|X_i - Y_j\|^2}{\gamma^2} - \tau \right)^2 \right\}
\\
\left\{ \left( \frac{\|Y_i - X_j\|^2}{\gamma^2} - \tau \right)^2  \right\} &
 \left\{ \left( \frac{\|Y_i - Y_j\|^2}{\gamma^2} - \tau \right)^2  \right\}
\end{bmatrix}\\
&& - 
\frac1{3!}
\begin{bmatrix} 
\left\{ e^{-\zeta^{XX}_{ij}} \left( \frac{\|X_i - X_j\|^2}{\gamma^2} - \tau \right)^3 \right\} & 
\left\{  e^{-\zeta^{XY}_{ij}} \left( \frac{\|X_i - Y_j\|^2}{\gamma^2} - \tau \right)^3 \right\}
\\
\left\{ e^{-\zeta^{YX}_{ij}} \left( \frac{\|Y_i - X_j\|^2}{\gamma^2} - \tau \right)^3 \right\} &
 \left\{ e^{-\zeta^{YY}_{ij}} \left( \frac{\|Y_i - Y_j\|^2}{\gamma^2} - \tau \right)^3  \right\}
\end{bmatrix}
\end{eqnarray*}

%
%

Recalling Eq.\eqref{eq:uTKu} and expanding using the above Taylor expansion of $K$, we get
\begin{equation}\label{eq:gmmdcq}
\GMMD = 2e^{-\tau} \frac{U_{CQ} }{\gamma^2} + \frac{e^{-\tau}}{(n-1)} u^T T_2 u - \frac2{3! (n-1)} u^T (E \circ T_3) u
\end{equation}
where, recalling that $\circ$ is the Hadamard product,
\begin{eqnarray*}
T_2 &:=& \begin{bmatrix}
\left\{ \left( \frac{\|X_i - X_j\|^2}{\gamma^2} - \tau \right)^2 \right\} & 
\left\{  \left( \frac{\|X_i - Y_j\|^2}{\gamma^2} - \tau \right)^2 \right\}
\\
\left\{ \left( \frac{\|Y_i - X_j\|^2}{\gamma^2} - \tau \right)^2  \right\} &
 \left\{ \left( \frac{\|Y_i - Y_j\|^2}{\gamma^2} - \tau \right)^2  \right\}
\end{bmatrix}\\
E &:=& \begin{bmatrix}
\{ e^{-\zeta^{XX}_{ij}} \} & \{ e^{-\zeta^{XY}_{ij}} \}\\
\{ e^{-\zeta^{YX}_{ij}} \} & \{ e^{-\zeta^{YY}_{ij}} \}
\end{bmatrix}\\
T_3 &:=& \begin{bmatrix}
\left\{ \left( \frac{\|X_i - X_j\|^2}{\gamma^2} - \tau \right)^3 \right\} & 
\left\{  \left( \frac{\|X_i - Y_j\|^2}{\gamma^2} - \tau \right)^3 \right\}
\\
\left\{ \left( \frac{\|Y_i - X_j\|^2}{\gamma^2} - \tau \right)^3  \right\} &
 \left\{ \left( \frac{\|Y_i - Y_j\|^2}{\gamma^2} - \tau \right)^3  \right\}
\end{bmatrix}.
\end{eqnarray*}

Note that we have used the fact that for $ u = \begin{bmatrix}
\textbf{1}_n/\sqrt{2n}\\
-\textbf{1}_n/\sqrt{2n}
\end{bmatrix} $ we have
$$
u^T \begin{bmatrix}
\left\{ 1 \right\} & \left\{ 1 \right\}
\\
\left\{ 1 \right\} & \left\{ 1 \right\}
\end{bmatrix} u = 0
$$
and also that
$$
U_{CQ} =  \frac1{\binom{n}{2}} \sum_{i \neq j} \left\{- \|X_i - X_j\|^2   - \|Y_i - Y_j\|^2 + \|X_i - Y_j\|^2 + \|X_j - Y_i\|^2  \right\}.
$$
Further, recall from Eq.\eqref{eq:tau} that $\tau = o(1)$.

The proof of the theorem will proceed from Eq.\eqref{eq:gmmdcq} in three steps. Define
\begin{eqnarray*}
U_4 &:=& \frac1{\binom{n}{2}} \sum_{i \neq j} h_4(X_i,X_j,Y_i,Y_j) \\
h_4(X_i,X_j,Y_i,Y_j) &:=& \|X_i-X_j\|^4 + \|Y_i - Y_j\|^4 - \|X_i - Y_j\|^4 - \|X_j - Y_i\|^4
\end{eqnarray*}
to note that 
$$
\frac1{(n-1)}u^T T_2 u = \left( \frac{U_4}{2\gamma^4} +  \frac{\tau U_{CQ}}{\gamma^2} \right)
$$
\begin{enumerate}
\item[(i)] First we will show that the third order Taylor remainder term $R_3 := \frac2{3! (n-1)} u^T (E \circ T_3) u$ is a smaller order term than $U_{CQ}/\gamma^2$. 

\item[(ii)] Denote $\theta_2 = \frac1{n-1}u^T \E[T_2] u$. We will show that $\theta_2 = o(\|\delta\|^2/\gamma^2)$.

\item[(iii)]  Denote $s_4 = Var(U_4)$. We will show that $Var(U_4/\gamma^4) = o(Var(U_{CQ}/\gamma^2))$.
\end{enumerate}

 Both $\theta_4$ and $s_4$ are  tedious to calculate, especially under the alternative, and we will have to develop a series of lemmas on the way to calculate these quantities.
Assuming for the moment that these above claims are true, we then have from Eq.\eqref{eq:gmmdcq} that
$$
\GMMD =  \frac{U_{CQ}}{\gamma^2} (2e^{-\tau} + o_P(1))
$$
Since we have assumed $ m\geq 8$ moments, this immediately implies convergence of means and variances, i.e.
\begin{equation}\label{eq:Egmmd}
\E \GMMD = \frac{\|\delta\|^2}{\gamma^2} (2e^{-\tau} + o(1))
\end{equation}
and
\begin{equation}\label{eq:Vgmmd}
Var(\GMMD) = \frac{Var(U_{CQ})}{\gamma^4} (2e^{-\tau} + o(1))^2
\end{equation}
which then implies that, ignoring smaller order terms,
$$
\frac{\GMMD - \E \GMMD}{\sqrt{Var(\GMMD)}} = \frac{U_{CQ} - \|\delta\|^2}{\sqrt{8\frac{Tr(\Sigma^2)}{n^2} + 8 \frac{\delta^T \Sigma \delta }{n}}} 
$$
and hence the distribution of $\GMMD$ matches the distribution of $U_{CQ}$ under null and alternative (and the above expression has a standard normal distribution), and the two statistics hence also have the same power. The same argument also holds for the studentized statistics calculated in practice. The rest of the proof is devoted to proving the three steps (i), (ii) and (iii).

\subsection*{Step (i): Bounding $R_3:= \frac2{3! (n-1)} u^T (E \circ T_3) u$}
Noting that every element of $E$ is smaller than 1, and hence $u^T(E \circ T_3) u \leq \|E \circ T_3\|_2 \leq \max_{ij} E_{ij} \|T_3\|_2 \leq \|T_3\|_2$, implying that (ignoring constants)
$$
R_3 \leq \frac{\|T_3\|_2}{n} \leq \frac{\|T_3\|_\infty}{\sqrt{n}}
$$
Let us now bound every term of $T_3$.
Taking a union bound on the statement of Assumption [A3], we see that the same exponential concentration bound holds uniformly for all $O(n^2) = o(d^4)$ pairs $i,j$, and hence w.p. tending to 1,
$$
\max_{ij} \Bigg\vert  \frac{\|S_i - S_j\|^2}{\gamma^2} - \tau \Bigg\vert  \leq d^{-\nu(\Sigma,m)} \frac{d}{\gamma^2}
$$
 (we also multiplied both sides by $d/\gamma^2$). Hence we have w.p. tending to 1, 
 $$
 R_3 \leq \frac{1}{d^{3\nu}\sqrt n} \frac{d^3}{\gamma^6}
 $$
Since any random variable satisfies $X = O_P(\sqrt{Var(X)}) $, we have that $U_{CQ}/\gamma^2 = O_P\left( \frac{\sqrt{Tr(\Sigma^2)}}{n \gamma^2} \right)$ under the null (its variance is even larger under the alternate), and hence 
$
R_3 = o_P\left( U_{CQ}/\gamma^2 \right)
$
whenever
$$
\frac{1}{d^{3\nu}\sqrt n} \frac{d^3}{\gamma^6} = o \left(\frac{\sqrt{Tr(\Sigma^2)}}{n \gamma^2} \right) \text{~ i.e. ~} \sqrt n  = o\left( \frac{\gamma^4 \sqrt{Tr(\Sigma^2)}}{d^{3-3\nu}} \right)
$$
This is reasonably satisfied whenever $\gamma^2 > Tr(\Sigma) \asymp d$ and $n = o(d^{3\nu - 1}Tr(\Sigma^2))$ as assumed. Hence, under our assumptions $R_3 = o_P(U_{CQ}/\gamma^2)$.  

\textbf{Remark.} We conjecture that this holds true under much weaker conditions on $\gamma, n, \Sigma, m$.

\subsection*{Step (ii): The Behavior of $\theta_4 = \E[U_4]$ and $\theta_2 = \frac1{n-1}u^T \E[T_2] u$}

Note the fact that for any random variable $V$, $\E(V - b)^2 = Var(V) + (\E V - b)^2$. Using $V = \|X-Y\|^2/\gamma^2$, $b=\tau$ and $\E V = \tau + \|\delta\|^2/\gamma^2$, we can write the off-diagonal terms as
\begin{eqnarray*}
\E
\begin{bmatrix}
\left\{ \left( \frac{\|X_i - X_j\|^2}{\gamma^2} - \tau \right)^2 \right\} & 
\left\{  \left( \frac{\|X_i - Y_j\|^2}{\gamma^2} - \tau \right)^2 \right\}
\\
\left\{ \left( \frac{\|Y_i - X_j\|^2}{\gamma^2} - \tau \right)^2  \right\} &
 \left\{ \left( \frac{\|Y_i - Y_j\|^2}{\gamma^2} - \tau \right)^2  \right\}
\end{bmatrix} &=& 
\begin{bmatrix}
\left\{ \frac{Var(\|X-X'\|^2)}{\gamma^4} \right\} & 
\left\{ \frac{Var(\|X-Y\|^2)}{\gamma^4} + \frac{\|\delta\|^4}{\gamma^4}  \right\}
\\
\left\{ \frac{Var(\|X-Y\|^2)}{\gamma^4} + \frac{\|\delta\|^4}{\gamma^4} \right\} &
 \left\{ \frac{Var(\|Y-Y'\|^2)}{\gamma^4} \right\}
\end{bmatrix}
\end{eqnarray*}
Since $Var(\|X-X'\|^2) = Var(\|Y-Y'\|^2)$, we have
$$
\theta_2 = Var(\|X-X'\|^2) - Var(\|X-Y\|^2) - \|\delta\|^4/\gamma^4.
$$
The next two propositions imply that
$\theta_2 = -8 \delta^T \Sigma \delta/\gamma^4 - \|\delta\|^4/\gamma^4 = o(\|\delta\|^2/\gamma^2)$, as required for step (ii).
They also imply that
$$
\theta_4 = -16 \delta^T \Sigma \delta - 8\|\delta\|^2 Tr(\Sigma) - 2 \|\delta\|^4 \asymp -\|\delta\|^2 Tr(\Sigma).
$$



\begin{proposition}\label{prop:zSz}
Define $Z' = Z_1-Z_2$ where $Z_1, Z_2$ are as in assumption [A1], [A2]. Then
\begin{eqnarray*}
\E(Z'^T \Sigma Z') &=&  2 Tr(\Sigma) \label{eq:varzSz}\\
Var(Z'^T \Sigma Z') &\asymp&  Tr(\Sigma^2) \label{eq:varzSz}\\
\E[(Z'^T \Sigma Z')^2] &\asymp& Tr^2(\Sigma)
\end{eqnarray*}
\end{proposition}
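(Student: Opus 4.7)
The plan is to reduce all three claims to standard quadratic-form moment formulas applied to the random vector $Z' := Z_1 - Z_2$. First I would observe that $Z'$ has mean zero and covariance $2I$, since $Z_1 \perp Z_2$ and each has identity covariance. The key preliminary step is to establish that the coordinates of $Z'$ are \emph{pseudo-independent} up to eighth order: expanding $(Z'_j)^{\alpha_j}=(Z_{1j}-Z_{2j})^{\alpha_j}$ by the binomial theorem and using $Z_1 \perp Z_2$ together with the coordinate product-moment property in [A2], every mixed moment $\E\prod_j (Z'_j)^{\alpha_j}$ with total order at most $8$ factors as a product of one-dimensional moments. In particular $\E(Z'_j)^2 = 2$ and $\mu_4 := \E(Z'_j)^4$ is a finite constant by [A2].

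Given this pseudo-independence I would then apply the standard formulas for quadratic forms. Linearity gives immediately
\[
\E[Z'^T \Sigma Z'] = \sum_{i,j}\Sigma_{ij}\,\E[Z'_i Z'_j] = 2\sum_i \Sigma_{ii} = 2\,Tr(\Sigma),
\]
which is the first claim. For the second-moment expansion, the standard fourth-moment formula for a mean-zero vector with independent coordinates of variance $2$ and fourth moment $\mu_4$, substituted into $\E[(Z'^T \Sigma Z')^2] = \sum_{i,j,k,\ell}\Sigma_{ij}\Sigma_{k\ell}\,\E[Z'_i Z'_j Z'_k Z'_\ell]$ and simplified using the symmetry of $\Sigma$, yields
\[
\E[(Z'^T \Sigma Z')^2] = 4\,Tr^2(\Sigma) + 8\,Tr(\Sigma^2) + (\mu_4-12)\sum_i \Sigma_{ii}^2,
\]
so that $Var(Z'^T \Sigma Z') = 8\,Tr(\Sigma^2) + (\mu_4-12)\sum_i \Sigma_{ii}^2$.

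The two $\asymp$ statements then follow by controlling the error terms. Using the elementary bound $\sum_i \Sigma_{ii}^2 \leq \sum_{i,j}\Sigma_{ij}^2 = Tr(\Sigma^2)$ together with the finiteness of $\mu_4$ from [A2], the non-Gaussian correction is $O(Tr(\Sigma^2))$ and hence $Var(Z'^T \Sigma Z') \asymp Tr(\Sigma^2)$. For the third claim, $\E[(Z'^T \Sigma Z')^2]$ decomposes as $(\E Z'^T \Sigma Z')^2 + Var(Z'^T \Sigma Z') = 4\,Tr^2(\Sigma) + O(Tr(\Sigma^2))$; by [A3a], $Tr(\Sigma^2) = o(Tr^2(\Sigma))$, so the squared-mean term dominates and yields $\E[(Z'^T \Sigma Z')^2] \asymp Tr^2(\Sigma)$.

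I expect the most delicate step to be the verification that pseudo-independence of $Z'$ up to eighth order really follows from [A2] and $Z_1 \perp Z_2$ via careful binomial bookkeeping. Once that is in place, the rest is a routine quadratic-form moment computation combined with the elementary trace inequalities above, and no heavier probabilistic tools are required.
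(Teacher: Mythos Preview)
Your proposal is correct and follows essentially the same route as the paper: both expand the quadratic form coordinatewise, use the factorization of mixed moments coming from [A2] together with $Z_1\perp Z_2$, arrive at $Var(Z'^T\Sigma Z') = 8\,Tr(\Sigma^2) + (\mu_4-12)\sum_i\Sigma_{ii}^2$ (the paper writes the excess-kurtosis constant as $4\Delta_4$), and then control the diagonal term via $\sum_i\Sigma_{ii}^2 \le Tr(\Sigma^2)$ before invoking [A3a] for the third claim. One small over-statement: for this particular proposition only fourth-order factorization of the $Z'$ moments is needed, not eighth.
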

\begin{proof}
Since $Z_1, Z_2$ are independent, zero mean and identity covariance, we have $Z'$ is mean zero and covariance $2I$ and fourth moment $\E Z'^4_k = \E (Z_{1k} - Z_{2k})^4 = 3 + \Delta_4+ 6  + 3 + \Delta_4= 12 + 2 \Delta_4$. Firstly
\begin{eqnarray*}
\E [Z'^T \Sigma Z'] &=& \E Tr(Z'^T \Sigma Z') 
= Tr \E (Z'^T \Sigma Z')
= Tr ( \E (\Sigma Z'Z'^T))\\
&=& 2Tr(\Sigma)
\end{eqnarray*}
where the last step follows since $\E[Z'Z'^T] = 2I$.
\begin{eqnarray*}
Var(Z'^T \Sigma Z') &=& \E [Z'^T \Sigma Z']^2 - [2Tr(\Sigma)]^2 = \E \sum_{i,j,k,l} \Sigma_{ij}\Sigma_{kl}Z'_i Z'_j Z'_k Z'_l - 4(\sum_i \Sigma_{ii})^2 \\
&=& 4 \sum_{i }\sum_{j \neq i} \Sigma_{ii} \Sigma_{jj} + 8 \sum_{i} \sum_{j \neq i} \Sigma_{ij}^2  + (12+4\Delta_4) \sum_i \Sigma_{ii}^2 - 4 (\sum_i \Sigma^2_{ii} + \sum_{i}\sum_{j \neq i} \Sigma_{ii}\Sigma_{jj}) \\
&=& 8 Tr(\Sigma^2) + 4\Delta_4 Tr(\Sigma \circ \Sigma)
\end{eqnarray*}
where the third step follows because the only nonzero terms in $\sum_{i,j,k,l}$ are because (a) $i=j$ and $k=l \neq i$ or (b) $i=k$ and $j=l \neq i$ or (c) $i=l$ and $j=k \neq i$ or (d) $i=j=k=l$ and the last step follows because $Tr(\Sigma^2) = \|\Sigma\|_F^2 = \sum_{i,j} \Sigma_{ij}^2 $. The lemma is proved because $\sum_i \Sigma_{ii}^2 \leq \sum_{i,j} \Sigma^2_{ij}$.
\begin{eqnarray*}
\text{Hence } \E[(Z'^T \Sigma Z')^2] &=& Var(Z'^T \Sigma Z') + (\E Z'^T \Sigma Z')^2 =  8 Tr(\Sigma^2) + 2\Delta_4\sum_i \Sigma_{ii}^2 + 4Tr^2(\Sigma)\\
&\asymp& Tr^2(\Sigma).
\end{eqnarray*}
\end{proof}


\begin{proposition}\label{prop:x-y}
Let $X,Y$ be as in assumption [A1], [A2], [A3].  Then
\begin{eqnarray*}
\E\|X-Y\|^2 &=& 2Tr(\Sigma) + \|\delta\|^2, \label{eq:expx-y}\\
Var(\|X -Y\|^2)  &\asymp& 8 Tr(\Sigma^2) + 8\delta^T \Sigma \delta, \label{eq:varx-y}\\
\E \|X - Y\|^4 &\asymp& 4 Tr^2(\Sigma) + 4 \|\delta\|^2Tr(\Sigma),\\
\end{eqnarray*}
\end{proposition}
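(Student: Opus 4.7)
The plan is to exploit the latent-variable representation in [A1]: write $X - Y = \Gamma W + \delta$ where $W := Z_1 - Z_2$ has mean zero and covariance $2I$. Since $Z_1, Z_2$ are iid, $W \stackrel{d}{=} -W$, so every odd joint moment of its coordinates vanishes --- this symmetry is the single trick I will lean on to kill cross-terms. Squaring gives the clean decomposition
\[
\|X-Y\|^2 \;=\; \underbrace{W^T \Gamma^T \Gamma W}_{A} \;+\; \underbrace{2\delta^T \Gamma W}_{B} \;+\; \|\delta\|^2.
\]
The matrix $\Gamma^T \Gamma$ is not literally $\Sigma$, but its key invariants coincide with those of $\Sigma = \Gamma \Gamma^T$, namely $Tr(\Gamma^T\Gamma) = Tr(\Sigma)$ and $Tr((\Gamma^T\Gamma)^2) = Tr(\Sigma^2)$ by cyclicity. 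So Proposition \ref{prop:zSz}, applied with $\Gamma^T \Gamma$ playing the role of the quadratic-form matrix, directly yields $\E A = 2Tr(\Sigma)$ and $Var(A) \asymp 8 Tr(\Sigma^2)$.

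For the first identity, taking expectations termwise gives $\E\|X-Y\|^2 = \E A + \E B + \|\delta\|^2 = 2Tr(\Sigma) + 0 + \|\delta\|^2$. For the variance, since $\|\delta\|^2$ is deterministic, $Var(\|X-Y\|^2) = Var(A) + Var(B) + 2\,Cov(A,B)$. The cross-covariance vanishes: $Cov(A,B) = \E[AB]$ is a linear combination of third joint moments $\E[W_i W_j W_k]$, each zero by the $W \stackrel{d}{=} -W$ symmetry. The remaining piece is a direct computation, $Var(B) = \E B^2 = 4\,\delta^T \Gamma \,\E[WW^T]\,\Gamma^T \delta = 8\,\delta^T \Sigma \delta$, using $\E[WW^T] = 2I$ and $\Gamma\Gamma^T = \Sigma$. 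Summing gives the claimed $Var(\|X-Y\|^2) \asymp 8Tr(\Sigma^2) + 8\delta^T \Sigma \delta$.

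For the fourth moment I will use $\E V^2 = Var(V) + (\E V)^2$ with $V = \|X-Y\|^2$ to get
\[
\E\|X-Y\|^4 \asymp 8Tr(\Sigma^2) + 8\delta^T\Sigma\delta + 4Tr^2(\Sigma) + 4\|\delta\|^2 Tr(\Sigma) + \|\delta\|^4.
\]
I then drop the subdominant terms using [A3] and [A4]: [A3a] gives $Tr(\Sigma^2) = o(Tr^2(\Sigma))$ and, via $\lambda_{\max}^2(\Sigma) \leq Tr(\Sigma^2)$, also $\lambda_{\max}(\Sigma) = o(Tr(\Sigma))$, so $\delta^T \Sigma \delta \leq \lambda_{\max}(\Sigma)\|\delta\|^2 = o(\|\delta\|^2 Tr(\Sigma))$; [A4] gives $\|\delta\|^2 = o(Tr(\Sigma))$, hence $\|\delta\|^4 = o(\|\delta\|^2 Tr(\Sigma))$. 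What survives is precisely $4Tr^2(\Sigma) + 4\|\delta\|^2 Tr(\Sigma)$, matching the third claim.

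There is no genuinely hard step, only bookkeeping. The most delicate point is the dimensional translation in the first paragraph --- confirming that Proposition \ref{prop:zSz} applies with the $D \times D$ matrix $\Gamma^T \Gamma$ rather than the $d \times d$ matrix $\Sigma$ as literally written --- which reduces to noting that its proof uses only $Tr$, $Tr^2$, and the Hadamard-diagonal sum of squares of the relevant matrix, each controlled by invariants shared between $\Gamma^T \Gamma$ and $\Gamma \Gamma^T = \Sigma$. The remaining work is routine algebra powered by the symmetry $W \stackrel{d}{=} -W$ and direct application of Proposition \ref{prop:zSz}.
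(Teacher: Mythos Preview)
Your proof is correct and follows essentially the same route as the paper: write $X-Y=\Gamma Z'+\delta$ with $Z'=Z_1-Z_2$, split $\|X-Y\|^2$ into the quadratic form, the linear term, and the constant, invoke Proposition~\ref{prop:zSz} for the quadratic part, kill the cross-covariance via vanishing odd moments of $Z'$, and obtain the fourth moment from $Var+(\E)^2$. Your symmetry argument $W\stackrel{d}{=}-W$ is just a clean way to state what the paper phrases as ``$Z'$ has first and third moments $0$,'' and your explicit remark that $\Gamma^T\Gamma$ and $\Sigma=\Gamma\Gamma^T$ share the relevant trace invariants is a point the paper leaves implicit in its use of $\Pi$.
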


\begin{proof}
Remember that $X - Y = \Gamma (Z_1 - Z_2) + \delta =: \Gamma Z' + \delta$. Note that $Z'$ has zero mean, variance $2I$ and every component is independent with third moment zero. Hence
\begin{eqnarray*}
\E\|X-Y\|^2 &=& \E \|\Gamma Z' + \delta\|^2 = \E [Z'^T\Pi Z'] + \|\delta\|^2 + 2\E[\delta^T \Gamma Z']\\
&=& 2Tr(\Sigma) + \|\delta\|^2.\\
\text{Hence } Var\|X-Y\|^2 &=& \E[ \|\Gamma Z' + \delta\|^2 - (2Tr(\Sigma) + \|\delta\|^2)]^2\\
&=& \E [Z'^T \Pi Z' + 2\delta^T \Gamma Z' - 2Tr(\Sigma)]^2\\
&=& Var(Z'^T \Pi Z') + 4\E[\delta^T \Gamma Z' Z'^T \Gamma^T \delta] + 4\E[(Z'^T \Pi Z' - 2Tr(\Sigma))\delta^T \Gamma Z']\\
&=& 8Tr(\Sigma^2) + 4\Delta_4 Tr(\Sigma \circ \Sigma) + 8 \delta^T \Sigma \delta + 4 \E\left[ \sum_{i,j}\Pi_{ij} Z'_i Z'_j Z'^T\right]\Gamma^T \delta\\
&=& 8Tr(\Sigma^2) + 4\Delta_4 Tr(\Sigma \circ \Sigma) + 8 \delta^T \Sigma \delta 
\end{eqnarray*}
The second last step follows since $\E \sum_{i,j}\Pi_{ij} Z'_i Z'_j Z_k'= 0$ since $Z'$ has first and third moments 0.

\begin{eqnarray*}
\text{Hence } \E \|X-Y\|^4 &=& Var (\|X-Y\|^2) + (\E \|X-Y\|^2)^2\\
&=& Var (Z' \Sigma Z') + 4 Tr^2(\Sigma)\\
&=& 8Tr(\Sigma^2) + 4\Delta_4 Tr(\Sigma \circ \Sigma) + 8 \delta^T \Sigma \delta+ 4Tr^2(\Sigma) + 4\|\delta\|^2 Tr(\Sigma) + \|\delta\|^4
\end{eqnarray*}

\end{proof}





\subsection*{Step (iii): The Behavior of $s_4 = Var(U_4)$}

We use the variance formula using the Hoeffding decomposition of the U-statistic $U_4$. We ignoring constants since we only aim to show that $Var(U_4/\gamma^4)$ is dominated by (is an order of magnitude smaller than) $Var(U_{CQ}/\gamma^2)$. Hence, we have by Lemma A of Section 5.2.1 of \cite{serfling},
\begin{equation}\label{eq:serfling-variance}
Var(U_4) \asymp \frac{Var(h_4)}{n^2} + \frac{Var(\E[h_4\vert  X,Y])}{n}.
\end{equation}

Some tedious algebra is required to estimate the second term. Recall that
\begin{eqnarray*}
U_4 &:=& \frac1{\binom{n}{2}} \sum_{i \neq j} h_4(X_i,X_j,Y_i,Y_j), \\
h_4(X_i,X_j,Y_i,Y_j) &:=& \|X_i-X_j\|^4 + \|Y_i - Y_j\|^4 - \|X_i - Y_j\|^4 - \|X_j - Y_i\|^4,\\
\theta &:=& \E\|X_i-X_j\|^4 + \E\|Y_i - Y_j\|^4 - \E\|X_i - Y_j\|^4 - \E\|X_j - Y_i\|^4.
\end{eqnarray*}
where $X,X' \sim P$ and $Y,Y' \sim Q$ from the model in [A1,A2] given by $X = \Gamma Z_1 $ and $Y = \Gamma Z_2 + \delta$.
 (since $h_4$ depends only on differences, we have assumed $\delta_1=0$ and $\delta_2 = \delta$ without loss of generality). Firstly, it is easy to verify that $h_4$ is a degenerate U-statistic under the null, since $\E [h_4 \vert  (X,Y)] = 0$ when $P=Q$.  We will now derive the variance of $\E [h_4 \vert  (X,Y)]$ when $P \neq Q$ under our assumptions.
Let us first derive $\E [h_4 \vert  (X,Y)]$ below. For convenience of notation, denote 
$$Y = \Gamma Z_Y$$ 
where $Z_Y = Z_2 + \eta$ and $\Gamma \eta = \delta$. Then
\begin{eqnarray*}
\|X-Y'\|^4 &=& (X^TX + Y'^T Y' - 2 X^T Y')^2 = (X^T X)^2 + (Y'^TY')^2 + 4(X^T Y')^2 \\
&& + 2 X^T X Y'^T Y' - 4 Y'^TY' X^T Y' - 4X^T X X^T Y',\\
\E[\|X-Y'\|^4 \vert  (X,Y)] &=& (X^T X)^2 +  \E [(Z_Y'^T \Pi Z_Y')^2]  + 4 X^T (\Sigma + \delta\delta^T) X + 2 X^T X (Tr(\Sigma) +\|\delta\|^2) \\
&& - 4\E[ Z_Y'^T \Pi Z_Y' Z_Y'^T \Gamma^T] \Gamma Z_1 - 4 X^T X X^T\delta,\\
\|X'-Y\|^4 &=& (X'^TX' + Y^T Y - 2 X'^T Y)^2 = (X'^T X')^2 + (Y^TY)^2 + 4(X'^T Y)^2 \\
&& + 2 X'^T X' Y^T Y - 4 Y^TY X'^T Y - 4X'^T X' X'^T Y,\\
\E[\|X'-Y\|^4 \vert  (X,Y)] &=& \E [(Z_1'^T \Pi Z_1')^2] + (Y^T Y)^2 + 4 Y^T \Sigma Y + 2 Y^T Y Tr(\Sigma)\\
&& - 4\E[ Z_1'^T \Pi Z_1' Z_1'^T \Gamma^T] (\Gamma Z_2 + \delta).
\end{eqnarray*}
Denoting $a_Y^T := \E[ Z_Y^T \Pi Z_Y Z_Y^T]$, we have
\begin{eqnarray*}
a_{Yk} &=& \E [ (\sum_{i \neq j} \Pi_{ij} Z_{Yi} Z_{Yj} + \sum_i \Pi_{ii} Z_{Yi}^2) Z_{Yk}] \\
&=& \E \left[ \sum_{i \neq j} \Pi_{ij} (Z_{2i} Z_{2j} + \eta_j Z_{2i} + \eta_i Z_{2j} + \eta_i\eta_j)(Z_{2k} + \eta_k) \right] \\
&& \quad + \E \left[\sum_i \Pi_{ii} (Z_{2i}^2 + 2Z_{2i} \eta_i + \eta_i^2) (Z_{2k} + \eta_k) \right]\\
&=& \left[ 0+ 0+ \sum_{j \neq k} \Pi_{kj} \eta_j  + 0 + \sum_{i \neq k} \Pi_{ik} \eta_i  + 0 +0 + \textcolor{blue}{\eta_k \sum_{i \neq j} \eta_i \Pi_{ij} \eta_j} \right] \\
&&  + \left[ \Delta_3 \Pi_{kk} + \eta_k \sum_i \Pi_{ii} + 2\Pi_{kk} \eta_k + 0 + 0 + \textcolor{blue}{\eta_k \sum_i \eta_i \Pi_{ii} \eta_i} \right]\\
&=& \left[ \textcolor{red}{2\sum_{j \neq k} \Pi_{jk} \eta_j }  \right] + \Bigg[ \Delta_3 \Pi_{kk} +\eta_k Tr(\Pi) + \textcolor{red}{2 \Pi_{kk} \eta_k} \Bigg] + \textcolor{blue}{\eta_k(\eta^T \Pi \eta)}\\
&=& \Delta_3 \Pi_{kk} + \eta_k Tr(\Pi) +   \textcolor{red}{2\Pi_k \eta} + \textcolor{blue}{\eta_k \|\delta\|^2 }.
\end{eqnarray*} 
Since $\Pi \eta = \Gamma^T \Gamma \eta = \Gamma^T\delta$, we have $a_Y^T = \Delta_3 diag(\Pi) + \eta Tr(\Pi) + 2\Gamma^T \delta + \|\delta\|^2\eta$.
Using this and calling $a_X^T = \E[Z_1^T \Pi Z_1 Z_1^T] = \Delta_3 diag(\Pi)$,
\begin{eqnarray*}
-\E[\|X-Y'\|^4 \vert  (X,Y)]
&=& \textcolor{blue}{-(X^T X)^2-\E [(Z_Y'^T \Pi Z_Y')^2]} \textcolor{magenta}{ - 4 X^T \Sigma X} - 4 X^T \delta\delta^T X \textcolor{magenta}{- 2 X^T X Tr(\Sigma)} \\
&& - 2 X^T X \|\delta\|^2 \textcolor{red}{ + 4 a_X^T \Gamma^T X} + 4Tr(\Sigma) \delta^T X +  8 \delta^T \Sigma X + 4 \|\delta\|^2 \delta^T X + 4 X^T X X^T\delta,\\
-\E[\|X'-Y\|^4 \vert  (X,Y)]
&=&  \textcolor{blue}{-\E [(Z_1'^T \Pi Z_1')^2]-(Y^T Y)^2} \textcolor{magenta}{- 4 Y^T \Sigma Y - 2 Y^T Y Tr(\Sigma)} \textcolor{red}{+ 4a_X^T \Gamma^T Y}, \\
\E[\|Y-Y'\|^4 \vert  (X,Y)]
&=& \textcolor{blue}{(Y^T Y)^2 +  \E [(Z_Y'^T \Pi Z_Y')^2]}  \textcolor{magenta}{+ 4 Y^T \Sigma Y} + 4Y^T \delta\delta^T Y  \textcolor{magenta}{+2 Y^T Y Tr(\Sigma)} +2Y^TY \|\delta\|^2  \\
&& \textcolor{red}{- 4a_X^T \Gamma^T Y}  - 4Tr(\Sigma) \delta^T Y - 8 \delta^T \Sigma Y - 4 \|\delta\|^2 \delta^T Y - 4 Y^TY Y^T\delta,\\
\E[\|X-X'\|^4 \vert  (X,Y)]
&=&  \textcolor{blue}{\E [(Z_1'^T \Pi Z_1')^2]+(X^T X)^2} \textcolor{magenta}{+4 X^T \Sigma X + 2 X^T X Tr(\Sigma)}  \textcolor{red}{-4a_X^T \Gamma^T X}.
\end{eqnarray*}
Adding the above 4 equations, we get
\begin{eqnarray}
\E [h_4 \vert  (X,Y)] &=&  4 \delta^T (YY^T - XX^T) \delta   + 2 (Y^T Y - X^T X) \|\delta\|^2 - 4 Tr(\Pi) \delta^T (Y-X) \nonumber \\
&& - 8 \delta^T \Sigma (Y-X) - 4\|\delta\|^2\delta^T(Y-X) -4 (Y^T Y Y^T - X^T X X^T)\delta \label{eq:Eh4Gxy}.
\end{eqnarray}

We will now take a detour to calculate the expectations and variances of products of quadratic forms, to aid us in bounding $Var(\E[h_4\vert (X,Y)])$ by bounding the variances of each term in Eq.\eqref{eq:Eh4Gxy} above.

\begin{proposition}
Let 
$Q := \epsilon^T \Pi \epsilon$ be a quadratic form, where $\epsilon$ is standard normal. Then
\begin{eqnarray*}
\E [Q] &=& Tr(\Pi)\\
\E [Q^2] &=& Tr^2(\Pi) + 2 Tr(\Pi^2)\\
Var(Q) &=& 2Tr(\Pi^2)\\
\E [Q^3] &=& Tr^3(\Pi) + 6 Tr(\Pi^2)Tr(\Pi) + 8Tr(\Pi^3)\\
\E [Q^4] 
&=& Tr^4(\Pi) + 12 Tr(\Pi^2)Tr^2(\Sigma) + 12 Tr^2(\Pi^2) + 32 Tr(\Pi)Tr(\Pi^3) + 48 Tr(\Pi^4)\\
Var(Q^2)
&=& Tr^4(\Pi) + 12 Tr(\Pi^2)Tr^2(\Pi) + 12 Tr^2(\Pi^2) + 32 Tr(\Pi)Tr(\Pi^3) + 48 Tr(\Pi^4) \\
&& - \left( Tr^4(\Pi) + 4Tr^2(\Pi^2) + 4 Tr(\Pi^2)Tr^2(\Pi) \right)\\
&\leq& 96 Tr(\Pi^2) Tr^2(\Pi)
\end{eqnarray*}
\end{proposition}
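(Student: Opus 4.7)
The plan is to reduce to a weighted sum of independent $\chi^2_1$ variables via spectral decomposition and then read off all moments from the cumulant generating function. Since $\epsilon \sim N(0,I)$ and $\Pi$ is symmetric, write $\Pi = U\Lambda U^T$ with $\Lambda = \mathrm{diag}(\lambda_1,\dots,\lambda_d)$; by orthogonal invariance of the standard normal, $U^T\epsilon \stackrel{d}{=} \epsilon$, so $Q \stackrel{d}{=} \sum_{i=1}^d \lambda_i \epsilon_i^2$ with $\epsilon_i$ i.i.d.\ $N(0,1)$. In particular the moment generating function is
\[
\E e^{tQ} \;=\; \prod_{i=1}^d (1-2t\lambda_i)^{-1/2} \;=\; \det(I-2t\Pi)^{-1/2},
\]
whose logarithm equals $\sum_{k\ge 1} \tfrac{(2t)^k}{2k}\,Tr(\Pi^k)$, so the $k$-th cumulant is
\[
\kappa_k(Q) \;=\; 2^{k-1}(k-1)!\,Tr(\Pi^k).
\]

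With this in hand, the first moment and variance are immediate: $\E Q = \kappa_1 = Tr(\Pi)$ and $Var(Q) = \kappa_2 = 2\,Tr(\Pi^2)$. Next I would convert cumulants to raw moments using the standard partition formula $m_k = \sum_{\pi\in P(k)}\prod_{B\in\pi}\kappa_{|B|}$. For $k=2,3,4$ this gives $m_2=\kappa_2+\kappa_1^2$, $m_3=\kappa_3+3\kappa_2\kappa_1+\kappa_1^3$, and $m_4=\kappa_4+4\kappa_3\kappa_1+3\kappa_2^2+6\kappa_2\kappa_1^2+\kappa_1^4$. Substituting $\kappa_1=Tr(\Pi)$, $\kappa_2=2Tr(\Pi^2)$, $\kappa_3=8Tr(\Pi^3)$, $\kappa_4=48Tr(\Pi^4)$ yields the claimed expressions for $\E Q^2$, $\E Q^3$, and $\E Q^4$ by direct arithmetic (for instance $m_4 = 48\,Tr(\Pi^4) + 32\,Tr(\Pi)Tr(\Pi^3) + 12\,Tr^2(\Pi^2) + 12\,Tr(\Pi^2)Tr^2(\Pi) + Tr^4(\Pi)$).

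The formula for $Var(Q^2) = \E Q^4 - (\E Q^2)^2$ is then a line of subtraction: $(\E Q^2)^2 = Tr^4(\Pi) + 4\,Tr(\Pi^2)Tr^2(\Pi) + 4\,Tr^2(\Pi^2)$, which gives exactly the displayed expression.

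For the final inequality $Var(Q^2)\le 96\,Tr(\Pi^2)Tr^2(\Pi)$ I would use that $\Pi = \Gamma^T\Gamma$ is PSD (from the model in [A1]), so its eigenvalues are nonnegative and the largest satisfies $\lambda_{\max}(\Pi)\le Tr(\Pi)$. Writing each trace through the eigenvalues, this gives $Tr(\Pi^k) \le \lambda_{\max}^{k-2} Tr(\Pi^2) \le Tr^{k-2}(\Pi)\,Tr(\Pi^2)$ for $k\ge 2$, so both $Tr^2(\Pi^2)$ and $Tr(\Pi)Tr(\Pi^3)$ and $Tr(\Pi^4)$ are each at most $Tr(\Pi^2)Tr^2(\Pi)$; summing the four contributions with their coefficients $8+8+32+48=96$ yields the bound. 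The main obstacle is purely bookkeeping --- getting the partition coefficients $\{1,4,3,6,1\}$ for $m_4$ right and combining four different trace terms into one dominating quantity --- but no genuine analytic difficulty arises once the cumulant generating function is in place.
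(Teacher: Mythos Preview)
Your proposal is correct. The paper's own proof simply cites Magnus (1979) and Kendall (1977) for the moment identities and then applies the trace inequality $Tr(AB)\le Tr(A)Tr(B)$ for PSD matrices to obtain the final bound. Your route is genuinely different in that it is self-contained: you diagonalize $\Pi$, recognize $Q$ as a weighted sum of independent $\chi^2_1$'s, read off the cumulants $\kappa_k = 2^{k-1}(k-1)!\,Tr(\Pi^k)$ from the log-MGF, and then convert to raw moments via the partition formula. This buys you an explicit derivation rather than a citation, and it makes transparent why every moment is a polynomial in the $Tr(\Pi^k)$. For the inequality you and the paper do essentially the same thing---you phrase it as $\lambda_{\max}(\Pi)\le Tr(\Pi)$ applied eigenvalue-wise, the paper phrases it as $Tr(AB)\le Tr(A)Tr(B)$---and both require the PSD-ness of $\Pi$, which you correctly source from the model $\Pi=\Gamma^T\Gamma$ in [A1] (the proposition statement itself does not say it). One small remark: your spectral decomposition step assumes $\Pi$ is symmetric; this is harmless since any quadratic form can be symmetrized without changing its value, but you might say so explicitly.
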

\begin{proof}

The expectations follow directly from the results of \cite{magnus79} and \cite{kendall1977advanced}. The last equation follows since $Tr(AB) \leq Tr(A) Tr(B)$ for any two psd matrices we have
$
Tr(\Pi^2) \leq Tr^2(\Pi)
$
and
$Tr(\Pi^3) \leq Tr(\Pi^2)Tr(\Pi)$
and
$Tr(\Pi^4) \leq Tr(\Pi^2)Tr^2(\Pi)$.
by Cauchy-Schwarz.

\end{proof}

\begin{proposition}\label{prop:ugly}
Let $Ts(A) = \sum_{ij} A_{ij}$ denote the \textit{T}otal \textit{s}um of all entries of $A$ and let $\circ$ denote Hadamard product. Let $Q = \epsilon^T \Pi \epsilon$, where  the moments of the coordinates of $\epsilon$ are given by
\begin{eqnarray*}
 m_1 &=& 0, \\
 m_2 &=& 1, \\
 m_3 &=& \Delta_3, \\
 m_4 &=& 3 + \Delta_4,\\
 m_5 &=& \Delta_5 + 10 \Delta_3,\\
 m_6 &=& \Delta_6 + 15 \Delta_4 + 10 \Delta_2^2 + 15,\\ 
 m_7 &=& \Delta_7 + 21 \Delta_5 + 35 \Delta_4\delta_3 + 105 \Delta_3,\\
 m_8 &=& \Delta_8 + 28 \Delta_6 + 56 \Delta_5 \Delta_3 + 35 \Delta_4^2 + 210 \Delta_4 + 280 \Delta_3^2 + 105.
\end{eqnarray*}
Here the $\Delta$s should be thought of as deviations from normality. $\Delta_3$ is skewness and $\Delta_4$ is kurtosis, and $\Delta_i=0$ for all $i$ if $\epsilon$ was standard Gaussian. Then, we have 
\begin{eqnarray*}
\E[Q] &=& Tr(\Pi),\\
Var[Q] &=& 2Tr(\Pi^2) + \Delta_4 Tr(\Pi \circ \Pi),\\ 
\E[Q^2]&=& 2Tr(\Pi^2) + \Delta_4 Tr(\Pi \circ \Pi) + Tr^2(\Pi),\\
\E[Q^4] &=& Tr^4(\Pi) + 12 Tr(\Pi^2)Tr^2(\Pi) + 12 Tr^2(\Pi^2) + 32 Tr(\Pi)Tr(\Pi^3) + 48 Tr(\Pi^4),\\
&& + \Delta_4 f_2 + \Delta_6 f_4 + \Delta_8 f_6 + \Delta_3^2 f_3 + \Delta_4^2 f_{42} + \Delta_3\Delta_5 f_{35}\\
\text{where } f_4 &=& 6 Tr^2(\Pi)Tr(\Pi \circ \Pi) + 12 Tr(\Pi^2)Tr(\Pi \circ \Pi) + 48 Tr(\Pi) Tr(\Pi \circ \Pi^2) \\
&& + 96 Tr(diag(\Pi) \Pi^3) + 48 Tr(diag^2(\Pi^2)),\\
f_6 &=& 4 Tr(\Pi)Tr(\Pi \circ \Pi \circ \Pi) + 24 Tr(\Pi \circ \Pi \circ \Pi^2),\\
f_8 &=& Tr(\Pi \circ \Pi \circ \Pi \circ \Pi),\\
f_3 &=& 24 Ts(diag(\Pi)\Pi diag(\Pi))Tr(\Pi) + 48 Ts(diag(\Pi)\Pi^2 diag(\Pi)) + 16 Ts(\Pi \circ \Pi \circ \Pi)Tr(\Pi)\\
&& + 96 Ts((\Pi \circ \Pi)\Pi diag(\Pi) ) + 96 Tr(\Pi (\Pi \circ \Pi) \Pi),\\
f_{42} &=& 3 Tr^2(\Pi \circ \Pi) + 24 Ts(diag(\Pi) (\Pi \circ \Pi) diag(\Pi)) + 8 Ts(\Pi \circ \Pi \circ \Pi \circ \Pi),\\
 f_{35} &=& 24 Ts(diag(\Pi )\Pi diag^2(\Pi)) + 32 Ts(diag(\Pi) (\Pi \circ \Pi \circ \Pi)),\\
 Var(Q^2) &\asymp& Tr(\Pi^2)Tr^2(\Pi).
\end{eqnarray*}
\end{proposition}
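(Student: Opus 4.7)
The plan is to reduce every moment of $Q = \epsilon^T\Pi\epsilon = \sum_{ij}\Pi_{ij}\epsilon_i\epsilon_j$ to a combinatorial sum indexed by set partitions. Writing
\[
\E[Q^k] \;=\; \sum_{a_1,b_1,\dots,a_k,b_k} \Pi_{a_1 b_1}\cdots \Pi_{a_k b_k}\,\E\!\left[\epsilon_{a_1}\epsilon_{b_1}\cdots\epsilon_{a_k}\epsilon_{b_k}\right],
\]
independence of the coordinates of $\epsilon$ factors the inner expectation as $\prod_\ell m_{|B_\ell|}$, where $\{B_\ell\}$ is the set partition of the $2k$ positions induced by the equality pattern of the attached indices. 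Since $m_1=0$, only partitions whose blocks are all of size at least $2$ contribute. Each such partition induces a multigraph on $\{1,\dots,2k\}$ whose $k$ edges are the pairs $\{a_\ell,b_\ell\}$ and whose vertex identifications come from the blocks; summing $\Pi$-entries subject to those identifications yields one of the matrix invariants built from traces, Hadamard traces $Tr(\Pi\circ\cdots\circ\Pi)$, and the total-sum functional $Ts(\cdot)$.

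The small cases are handled directly. $\E[Q]$ involves only the partition $(2)$ and returns $Tr(\Pi)$. For $\E[Q^2]$ the admissible block-size sequences are $(4)$ and $(2,2)$: the $(4)$ class contributes $m_4\,Tr(\Pi\circ\Pi)$, and the three pair-matchings of $(2,2)$ contribute $Tr^2(\Pi)+2\,Tr(\Pi^2)$ after imposing the constraint of distinct block values; combining and substituting $m_4=3+\Delta_4$ yields the stated $\E[Q^2]$, and hence $Var(Q)$ after subtracting $(\E Q)^2=Tr^2(\Pi)$.

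For $\E[Q^4]$ the block-size sequences with all parts $\geq 2$ are $(2,2,2,2)$, $(4,2,2)$, $(4,4)$, $(6,2)$, $(3,3,2)$, $(5,3)$, and $(8)$. The $(2,2,2,2)$ class uses only $m_2^4=1$ and is exactly the Isserlis contraction sum that reproduces the Gaussian formula derived in the preceding proposition, namely $Tr^4(\Pi)+12\,Tr(\Pi^2)Tr^2(\Pi)+12\,Tr^2(\Pi^2)+32\,Tr(\Pi)Tr(\Pi^3)+48\,Tr(\Pi^4)$. The remaining partitions necessarily carry at least one $\Delta_k$ factor and produce the non-Gaussian corrections. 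Each multigraph topology within a given block-size sequence is matched to its matrix invariant by tracking which $\Pi$-edges share each identified vertex: for instance, a $(4,2,2)$ topology in which the size-$4$ block ties together two diagonal endpoints of two distinct edges produces $Tr^2(\Pi)Tr(\Pi\circ\Pi)$; one that funnels four edge-endpoints into a chain produces $Tr(\Pi)\,Tr(\Pi\circ\Pi^2)$; the $(8)$ class, where all eight positions coincide, gives $Tr(\Pi\circ\Pi\circ\Pi\circ\Pi)$; and so on through the other invariants. ``Open'' configurations in which identifications leave diagonal entries linked by a single $\Pi$-edge rather than by a closed contour produce the $Ts$-expressions appearing in $f_3,f_{42},f_{35}$. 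Collecting all such contributions and subtracting the Gaussian parts of $m_4,m_5,m_6,m_8$ that have already been counted inside the $(2,2,2,2)$ Isserlis sum yields the stated coefficients $f_4,f_6,f_8,f_3,f_{42},f_{35}$ weighted by the appropriate $\Delta$-products.

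Finally, $Var(Q^2)=\E[Q^4]-(\E[Q^2])^2$: the $Tr^4(\Pi)$ piece cancels, and $4\,Tr^2(\Pi)Tr(\Pi^2)$ cancels against the cross term, leaving the $(2,2,2,2)$-origin $8\,Tr(\Pi^2)Tr^2(\Pi)$ as the dominant surviving contribution. All other terms, whether pure-trace or Hadamard or $Ts$, are bounded by a constant multiple of $Tr(\Pi^2)Tr^2(\Pi)$ via the inequalities $Tr(\Pi^4)\leq Tr(\Pi^2)\lambda_{\max}^2(\Pi)\leq Tr(\Pi^2)Tr^2(\Pi)$, $Tr(\Pi^3)\leq \sqrt{Tr(\Pi^2)Tr(\Pi^4)}$, $Tr(\Pi\circ\Pi)\leq Tr(\Pi^2)$, and analogous dominations of the $Ts$-forms, proving $Var(Q^2)\asymp Tr(\Pi^2)Tr^2(\Pi)$. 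The main obstacle throughout is the combinatorial bookkeeping for $\E[Q^4]$: each of the seven admissible block-size sequences admits several multigraph topologies, and assigning the correct multiplicity to each requires care, since a single miscounted coefficient propagates through the entire formula. Two sanity checks help contain this: specializing to $\Pi=I$, where every invariant collapses to a power of $d$ and the counting identity can be verified directly, and specializing to the Gaussian case $\Delta_i=0$, where the $\Delta$-corrections must vanish and the formula must reduce to the Isserlis expression of the preceding proposition; any discrepancy in either check localizes the miscount to a specific block-size sequence.
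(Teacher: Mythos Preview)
Your approach is correct but differs substantively from the paper's. The paper does not derive the explicit formulas for $\E[Q]$, $Var(Q)$, $\E[Q^2]$, and $\E[Q^4]$ at all: it simply cites \cite{baoullah10} (Bao and Ullah, 2010) for those identities and moves directly to the last claim. You instead outline a first-principles derivation via the partition expansion of $\E[\epsilon_{a_1}\epsilon_{b_1}\cdots\epsilon_{a_k}\epsilon_{b_k}]$, splitting by block-size sequences $(2,2,2,2),(4,2,2),\dots,(8)$ and reading off the matrix invariants from the induced multigraph topologies. This is exactly the machinery underlying the cited reference, so your route is self-contained where the paper is not; the cost is the ``combinatorial bookkeeping'' you flag, which the paper sidesteps entirely by citation.

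For the final claim $Var(Q^2)\asymp Tr(\Pi^2)Tr^2(\Pi)$, the two arguments are essentially the same: both subtract $(\E Q^2)^2$ and then dominate every surviving term---pure-trace, Hadamard, and $Ts$---by a constant times $Tr(\Pi^2)Tr^2(\Pi)$ using inequalities of the form $Tr(\Pi^{k+1})\leq Tr(\Pi^k)Tr(\Pi)$, $Tr(\Pi\circ\Pi)\leq Tr(\Pi^2)$, and Hadamard-product identities such as $diag(\Pi)(\Pi\circ\Pi)diag(\Pi)=\Pi\circ(diag(\Pi)\Pi\,diag(\Pi))$. The paper lists these bounds term by term for each $f_i$; your version is more compressed but relies on the same estimates.
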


\begin{proof}
The first four claims follow directly from the detailed work of \cite{baoullah10}. Let us see how the last claim then follows. First note that  $Tr(\Pi \circ \Pi) \leq Tr(\Pi^2) \leq Tr^2(\Pi)$. The first inequality follows  because $\sum_{i} \Pi_{ii}^2 \leq \sum_{i,j} \Pi_{i,j}^2 = \|\Pi\|_F^2 = Tr(\Pi^2)$. The second follows because $0 \leq Tr(\Pi^2) = \langle \Pi, \Pi \rangle \leq \|\Pi\|_{op} \|\Pi\|_{*} \leq Tr^2(\Pi)$ by Cauchy-Schwarz. We also use the Hadamard product identity $diag(\Pi)(\Pi \circ \Pi) diag(\Pi) = (diag(\Pi)\Pi) \circ (\Pi diag(\Pi)) = (\Pi diag(\Pi)) \circ (diag(\Pi)\Pi) = \Pi \circ (diag(\Pi)\Pi diag(\Pi))$, see \cite{horn1991topics}.
Since $Tr(AB) \leq Tr(A) Tr(B)$ for any two psd matrices, we similarly have
\begin{align*}
 Ts(\Pi \circ \Pi \circ \Pi) &= \sum_{ij} \Pi_{ij}^3 \leq  \sum_{ij} \vert \Pi_{ij}\vert ^3 \leq  (\sum_{ij} \Pi_{ij}^2)^{3/2} = Tr^{3/2}(\Pi^2) \leq Tr(\Pi^2)Tr(\Pi)\\
 Tr(\Pi \circ \Pi \circ \Pi) &= \sum_{i} \Pi_{ii}^3 \leq (\sum_i \Pi_{ii}^2)^{3/2} \leq (\sum_{ij} \Pi_{ij}^2)^{3/2} < Tr(\Pi^2)Tr(\Pi)\\
Ts(\Pi \circ \Pi \circ \Pi \circ \Pi) &= \sum_{ij} \Pi_{ij}^4 = \langle \Pi \circ \Pi, \Pi \circ \Pi \rangle \leq  Tr^2(\Pi \circ \Pi) < Tr(\Pi^2)Tr^2(\Pi)\\
Tr(\Pi \circ \Pi \circ \Pi \circ \Pi) &< Tr(\Pi^2)Tr^2(\Pi)\\
Tr(diag(\Pi)\Pi^3) &\leq Tr(diag(\Pi)) Tr(\Pi^3) \leq Tr(\Pi^2)Tr^2(\Pi)\\
Tr(\Pi (\Pi \circ \Pi) \Pi) &\leq Tr(\Pi)Tr(\Pi \circ \Pi)Tr(\Pi) \leq Tr(\Pi^2)Tr^2(\Pi)\\
Ts(diag(\Pi)(\Pi \circ \Pi) diag(\Pi)) &\leq Tr^2(\Pi) Tr(\Pi^2).
\end{align*}

In this fashion, we can verify that the dominant term of $Var(Q^2)$ scales as $Tr(\Pi^2)Tr^2(\Pi)$.

\end{proof}


We can now extend these results to the case where the quadratic form is uncentered. 

\begin{proposition}\label{prop:uncenteredQ}
$Q = \epsilon^T \Pi \epsilon$ and $Q' = Q + a^T \epsilon + b$, where $\epsilon$ satisfies the conditions of the previous proposition, $a^T a = 4\delta^T \Sigma \delta$ and $b=\delta^T \delta$. Then
\begin{eqnarray*}
\E[Q'] &=& Tr(\Pi) + b\\
Q'^2 &=& Q^2 + (a^T \epsilon)^2 + b^2 + 2 Q a^T \epsilon + 2 b a^T \epsilon + 2 b Q\\
\E Q'^2 &\asymp& Tr^2(\Pi) + 2 Tr(\Pi^2) + a^T a + b^2 + 2 \Delta_3 diag(\Pi) a + 2b Tr(\Pi) \\
Var(Q') &\asymp& 2 Tr(\Pi^2) + a^T a + 2\Delta_3 diag(\Pi) a\\
Var(Q'^2) &\leq& 2 Var(Q^2) + 4 (a^Ta)^2 + 2\Delta_4 Tr(aa^T \circ aa^T) + 4 Var(Q a^T \epsilon) \\
&& + 4 b^2 a^T a + 8b^2 Tr(\Pi^2) + 4b^2 \Delta_4 Tr(\Pi \circ \Pi)\\
&\asymp& Tr^2(\Pi)Tr(\Pi^2)\\
&\asymp& Var(Q^2).
\end{eqnarray*}
\end{proposition}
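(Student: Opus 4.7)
The plan is to treat $Q' = Q + a^T\epsilon + b$ as a perturbation of the pure quadratic form $Q$ studied in Proposition~\ref{prop:ugly}, and to reduce every moment computation to either (i) quantities already controlled by Proposition~\ref{prop:ugly} or (ii) explicit low-order moments of $\epsilon$ that are easy to read off from the table $m_1,\dots,m_8$. First I would write $Q'^2 = Q^2 + (a^T\epsilon)^2 + b^2 + 2Q\,a^T\epsilon + 2b\,a^T\epsilon + 2bQ$ and compute each of $\E[Q'],\E[Q'^2]$ term by term, using $\E[a^T\epsilon]=0$ (since $m_1=0$), $\E[(a^T\epsilon)^2] = a^Ta$, $\E[Q] = Tr(\Pi)$, and the key odd-moment cross term
\[
\E[Q\,a^T\epsilon] \;=\; \sum_{i,j,k}\Pi_{ij} a_k\,\E[\epsilon_i\epsilon_j\epsilon_k] \;=\; \Delta_3\,\mathrm{diag}(\Pi)^T a,
\]
since the only surviving index pattern is $i=j=k$. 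Substituting these yields the stated expressions for $\E[Q'^2]$ and $Var(Q')$.

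For $Var(Q'^2)$ I would use the generalized triangle inequality $Var(\sum_i T_i) \le C \sum_i Var(T_i)$ (for a fixed number of terms $C$) to split into six variances. The $Q^2$ piece is controlled by Proposition~\ref{prop:ugly}. For $(a^T\epsilon)^2$ one gets a bound of the form $2(a^Ta)^2 + \Delta_4\, Tr(aa^T\circ aa^T)$ by the analogue of the $\Delta_4$ computation in Proposition~\ref{prop:ugly} specialized to the rank-one matrix $aa^T$. The purely linear term $ba^T\epsilon$ contributes $b^2\,a^Ta$, and $bQ$ contributes $b^2\,Var(Q) \asymp b^2(Tr(\Pi^2) + \Delta_4\,Tr(\Pi\circ\Pi))$. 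The constant $b^2$ has zero variance.

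The main obstacle is the cross term $Var(Q\,a^T\epsilon)$, which mixes a quadratic and a linear form in $\epsilon$ and needs a bound in terms of traces of $\Pi$ and the scalar $a^Ta$. I would bound it by $\E[Q^2 (a^T\epsilon)^2]$ and expand, using at most sixth moments of the coordinates of $\epsilon$. The nonzero index patterns split into diagonal and off-diagonal contributions that each reduce to sums of the form $Tr(\Pi^2)\cdot a^Ta$, $Tr^2(\Pi)\cdot a^Ta$, $(a^T \Pi a)\,Tr(\Pi)$, plus $\Delta$-corrections like $\Delta_4\,a^T (\Pi\circ\Pi) a$. All of these are bounded by $Tr^2(\Pi)\cdot a^Ta$ using Cauchy--Schwarz bounds $Tr(\Pi^2)\le Tr^2(\Pi)$, $a^T \Pi a \le \lambda_{\max}(\Pi)\,a^Ta \le Tr(\Pi)\,a^Ta$, and the Hadamard-product estimates already displayed in the proof of Proposition~\ref{prop:ugly}.

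Finally, to conclude $Var(Q'^2)\asymp Tr^2(\Pi)Tr(\Pi^2)\asymp Var(Q^2)$, I would use $a^Ta = 4\delta^T\Sigma\delta$ and $b=\|\delta\|^2$ together with Assumption [A4] to show that every perturbation term is of smaller order than $Tr^2(\Pi)Tr(\Pi^2)$. Concretely, [A4] gives $\|\delta\|^2=o(Tr(\Sigma))$ and $\delta^T\Sigma\delta = o(Tr(\Sigma^2)Tr(\Sigma))$ (after a Cauchy--Schwarz step already spelled out in the remark following [A4]), so $(a^Ta)^2$, $b^2 Tr(\Pi^2)$, $b^2 a^Ta$, and $Tr^2(\Pi) a^Ta$ are all $o(Tr^2(\Pi)Tr(\Pi^2))$; the $\Delta_4$ Hadamard corrections are smaller still, being bounded by the corresponding un-Hadamarded traces. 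Therefore $Var(Q^2)$ dominates, and Proposition~\ref{prop:ugly} gives the asymptotic equivalence $Var(Q'^2)\asymp Tr^2(\Pi)Tr(\Pi^2)$. The combinatorial bookkeeping of the sixth-moment expansion for $Var(Q\,a^T\epsilon)$ is the one genuinely tedious step; everything else is a direct substitution into previously established identities.
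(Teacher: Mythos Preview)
Your proposal is correct and follows essentially the same approach as the paper's own proof, which simply says ``all statements hold by expansion and substitution from the previous proposition'' and then invokes Assumption~[A4] (specifically $a^Ta = 4\delta^T\Sigma\delta = o(\lambda_{\max}(\Sigma)Tr(\Sigma))$, hence $(a^Ta)^2 = o(Tr(\Sigma^2)Tr^2(\Sigma))$ via $\lambda_{\max}^2(\Sigma)\le Tr(\Sigma^2)$) to show $Var(Q^2)$ dominates. You supply more detail than the paper does---in particular the explicit computation of the odd cross-moment $\E[Q\,a^T\epsilon]=\Delta_3\,\mathrm{diag}(\Pi)^Ta$ and a sketch of how to control $Var(Q\,a^T\epsilon)$ via $\E[Q^2(a^T\epsilon)^2]$---but the structure is identical.
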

\begin{proof}
All statements hold simply by expansion and substitution from the previous proposition. 
Remembering that $Var(Q^2) \asymp Tr(\Sigma^2)Tr^2(\Sigma)$, we can see that the last claim holds. Indeed, Assumption [A4] implies that $a^T a = o(\lambda_{\max}(\Sigma)Tr(\Sigma))$ and hence $(a^Ta)^2 = o(Tr(\Sigma^2)Tr^2(\Sigma))$ since $\lambda_{\max}^2(\Sigma) \leq \|\Sigma\|_F^2 = Tr(\Sigma^2)$. Similarly, $b^2 a^T a = o(Tr^2(\Sigma)Tr(\Sigma^2))$. In this fashion we deduce that the dominant term in $Var(Q'^2)$ is $Var(Q^2)$.

Since $Var(A+B) \leq 2 Var(A) + 2 Var(B)$ and $(a+b+c)^2 \leq 3a^2 + 3b^2 + 3c^2$, we can alternately derive the following bound for variances of quadratic forms involving $Y = \Gamma Z_2 + \delta$:
\begin{align*}
Y^T Y ~&=~ Z_2^T \Pi Z_2 + \delta^T \delta + 2\delta^T \Gamma Z_2\\
Y^T \Sigma Y ~&=~ Z_2^T \Pi^2 Z_2 + \delta^T \Sigma \delta + 2 \delta^T \Sigma \Gamma Z_2\\
(Y^T Y)^2  ~&\leq~ 3(Z_2^T \Pi Z_2)^2 + 3(\delta^T\delta)^2 + 3 (\delta^T \Gamma Z_2)^2\\
\E[ Y^T Y ] ~&=~ Tr(\Sigma) + \delta^T \delta\\
\E [Y^T \Sigma Y ] ~&=~ Tr(\Sigma^2) + \delta^T \Sigma \delta\\
Var(Y^T Y) ~&\leq~ 4 Tr(\Sigma^2) + 8 \delta^T \Sigma \delta\\
\E[(Y^TY)^2] ~&=~ Var(Y^T Y) + \E^2(Y^T Y) \\ 
~&\leq~  4Tr(\Sigma^2) + 8 \delta^T \Sigma \delta + (Tr(\Sigma) + \delta^T \delta)^2 ~\asymp~ Tr^2(\Sigma)\\
Var(Y^T \Sigma Y) ~&\leq~ 4 Tr(\Sigma^4) + 8 \delta^T \Sigma^3 \delta\\
Var((Y^T Y)^2) ~&\leq~ 18 Var((Z^T \Pi Z_2)^2) + 18 Var((\delta^T  \Gamma Z_2)^2)\\
~&\asymp~ Tr(\Sigma^2)Tr^2(\Sigma) + (\delta^T \Sigma \delta)^2
\end{align*}
where we used $var((v^T Z)^2) = var(Z^T vv^T Z) = 2Tr((vv^T)^2) = 2(v^T v)^2$. Since $\delta^T \Sigma \delta = o(Tr(\Sigma^2))$ by our assumptions, the last expression is dominated by its first term. 

\end{proof}
%
%
%


\begin{proposition}\label{prop:VarYtYYtD}
\begin{eqnarray*}
Var(X^TXX^T\delta) &\asymp& Tr^2(\Sigma)\delta^T \Sigma \delta\\
Var(Y^T Y Y^T \delta) &\asymp& Tr^2(\Sigma)\delta^T \Sigma \delta\\
&\asymp& Var(X^TXX^T\delta).
\end{eqnarray*}
\end{proposition}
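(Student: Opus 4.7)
The plan is to compute both variances by reducing to moment calculations in the underlying isotropic vector $Z$, and then show that the dominant contribution in each case comes from the same ``$Tr(\Pi) \cdot L$'' term where $L := c^T Z$ with $c := \Gamma^T \delta$ (so $c^T c = \delta^T \Sigma \delta$).

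First I would handle $X = \Gamma Z_1$. Write $Q := Z_1^T \Pi Z_1$ with $\Pi := \Gamma^T \Gamma$, so $\E Q = Tr(\Pi) = Tr(\Sigma)$, and set $\widetilde Q := Q - Tr(\Pi)$. Then
\[
X^T X X^T \delta \;=\; Q \cdot L \;=\; Tr(\Pi)\,L \;+\; \widetilde Q\, L.
\]
Hence
\[
Var(X^T X X^T \delta) \;=\; Tr^2(\Pi)\,Var(L) \;+\; 2\,Tr(\Pi)\,Cov(L,\widetilde Q L) \;+\; Var(\widetilde Q L).
\]
The leading term is $Tr^2(\Sigma)\,\delta^T \Sigma \delta$, since $Var(L) = c^T c = \delta^T \Sigma \delta$. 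Using Proposition \ref{prop:ugly} and standard moment identities for independent coordinates with up to eight moments, one checks $\E \widetilde Q^4 \asymp Tr^2(\Pi^2) + Tr(\Pi^4)$ and $\E L^4 \asymp (c^T c)^2$, so by Cauchy--Schwarz $Var(\widetilde Q L) \le \E[\widetilde Q^2 L^2] = O(Tr(\Pi^2) \cdot \delta^T \Sigma \delta)$, which is $o(Tr^2(\Sigma)\,\delta^T \Sigma \delta)$ by assumption [A3a]. A direct expansion gives $\E[\widetilde Q L^2] = 2\,c^T \Pi c \;+\; \text{(higher-moment terms)} = 2\,\delta^T \Sigma^2 \delta + \ldots$, so the cross term is $O(Tr(\Sigma)\,\delta^T \Sigma^2 \delta)$; since [A3a] implies $\lambda_{\max}(\Sigma) = o(Tr(\Sigma))$, the bound $\delta^T \Sigma^2 \delta \le \lambda_{\max}(\Sigma)\,\delta^T \Sigma \delta$ makes this $o(Tr^2(\Sigma)\,\delta^T \Sigma \delta)$. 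Subtracting $(\E X^T X X^T \delta)^2$, which equals $\Delta_3^2 (c^T \mathrm{diag}(\Pi))^2 = O(\delta^T\Sigma\delta \cdot Tr(\Pi\circ\Pi)) = o(Tr^2(\Sigma)\,\delta^T\Sigma\delta)$, leaves the dominant term unchanged. This yields $Var(X^T X X^T \delta) \asymp Tr^2(\Sigma)\,\delta^T \Sigma \delta$.

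Next I would handle $Y = \Gamma Z_2 + \delta$ by expanding $Y^T Y Y^T \delta$ around the mean shift. With the same $Q, L, c$ (now in $Z_2$),
\[
Y^T Y \;=\; Q + 2 L + \|\delta\|^2, \qquad Y^T \delta \;=\; L + \|\delta\|^2,
\]
so
\[
Y^T Y Y^T \delta \;=\; Q L \;+\; 2 L^2 \;+\; \|\delta\|^2\, Q \;+\; 3\|\delta\|^2\,L \;+\; \|\delta\|^4.
\]
Using $Var(\sum_k A_k) \le K \sum_k Var(A_k)$ together with $|Cov(A,B)| \le \sqrt{Var(A)\,Var(B)}$, it suffices to control each $Var(A_k)$ and show that only $Var(QL) \asymp Tr^2(\Sigma)\,\delta^T \Sigma \delta$ survives. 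Indeed $Var(2L^2) \asymp (\delta^T\Sigma\delta)^2$; $Var(\|\delta\|^2 Q) = \|\delta\|^4 \cdot Var(Q) \asymp \|\delta\|^4 Tr(\Sigma^2)$; $Var(3\|\delta\|^2 L) = 9 \|\delta\|^4 \delta^T\Sigma\delta$. Each of these is $o(Tr^2(\Sigma)\,\delta^T \Sigma \delta)$ by the assumed bounds in [A4]: specifically $\|\delta\|^2 = o(Tr(\Sigma))$ kills the $L^2$ and the $\|\delta\|^2 L$ variances, while the stronger bound $\|\delta\|^2 = o(Tr^2(\Sigma)\lambda_{\min}(\Sigma)/Tr(\Sigma^2))$ together with $\delta^T\Sigma\delta \ge \lambda_{\min}(\Sigma)\|\delta\|^2$ kills $\|\delta\|^4 Tr(\Sigma^2)$.

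Combining these two computations establishes $Var(Y^T Y Y^T \delta) \asymp Var(X^T X X^T \delta) \asymp Tr^2(\Sigma)\,\delta^T \Sigma \delta$. The main obstacle is the bookkeeping of the mixed terms in step~(2): several are genuinely of intermediate order (like $Tr(\Sigma)\,\delta^T \Sigma^2 \delta$ and $\|\delta\|^4 Tr(\Sigma^2)$) and absorbing them requires carefully invoking both the spectral conditioning in [A3a] and the low-signal bound in [A4]; without [A4], one would not even be able to conclude that $Var(Y^TY Y^T\delta)$ is dominated by $Var(QL)$ rather than by $\|\delta\|^4 Var(Q)$.
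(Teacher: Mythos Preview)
Your argument is correct and reaches the same conclusion, but the route is genuinely different from the paper's. The paper computes $Var(X^TXX^T\delta)$ by first deriving the full sixth-moment matrix $\E[XX^TXX^TXX^T]$: it works out the entries of $C:=\E[(Z^T\Pi Z)^2 ZZ^T]$ combinatorially and obtains the closed form $8\Sigma^3+4Tr(\Sigma)\Sigma^2+(2Tr(\Sigma^2)+Tr^2(\Sigma))\Sigma$, then sandwiches by $\delta$. For $Y$, the paper expands $\E[\delta^T YY^TYY^TYY^T\delta]$ into four blocks $G_1,\dots,G_4$ and relies on the cancellation of the large $\|\delta\|^4Tr^2(\Sigma)$ piece in $G_4$ against $(\E[Y^TYY^T\delta])^2$ to isolate the dominant $Tr^2(\Sigma)\delta^T\Sigma\delta$ contribution coming from $G_2$.

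Your approach instead centers the quadratic form, writing $QL=Tr(\Pi)L+\widetilde QL$, so that the dominant $Tr^2(\Sigma)\delta^T\Sigma\delta$ appears directly as $Tr^2(\Pi)\,Var(L)$ and the remaining pieces are killed by Cauchy--Schwarz plus [A3a]. For $Y$ you expand $Y^TYY^T\delta$ as a short polynomial in $(Q,L,\|\delta\|^2)$ and bound each variance separately; because the constant $\|\delta\|^4$ has zero variance, you never face the large-term cancellation the paper has to track. This is more structural and avoids the entrywise matrix combinatorics, at the cost of not producing the exact lower-order coefficients (which the statement does not require anyway). Both methods lean on the same ingredients---Proposition~\ref{prop:ugly} for the fourth moment of $\widetilde Q$, assumption~[A3a] for $Tr(\Sigma^2)=o(Tr^2(\Sigma))$ and hence $\lambda_{\max}(\Sigma)=o(Tr(\Sigma))$, and the signal bound in~[A4] to control $\|\delta\|^4Tr(\Sigma^2)$---so the dependence on hypotheses is identical.
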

\begin{proof}
Let us first calculate $Var(X^T X X^T \delta)$, for which we need to know $\E[XX^T XX^T XX^T]$. Let us first calculate $\E[XX^T XX^T]$.
For this purpose, see that $\E(Z_1 Z_1^T \Pi  Z_1Z_1^T) = \E( (Z_1^T \Pi  Z_1) Z_1 Z_1^T) =   2\Pi + Tr(\Pi) I$.
This is true because its off-diagonal element is $\E (\sum_{ij} \Pi_{ij} z_i z_j z_a z_b) = 2\Pi_{ab}$, and its diagonal is $ \E (\sum_{ij} \Pi_{ij} z_i z_j z_a^2 ) = 3 \Pi_{aa} + \sum_{k \neq a} \Pi_{kk} = Tr(\Pi) + 2\Pi_{aa}$.
Hence $\E(XX^T X X^T) = \Gamma  \E (Z_1 Z_1^T \Pi Z_1 Z_1^T ) \Gamma^T = 2 \Sigma^2 + Tr(\Sigma) \Sigma$. Now, we are ready to calculate $\E[XX^T XX^T XX^T]$.
\begin{flalign*}
\text{Define } C &:= \E((Z_1^T \Pi Z_1)^2 Z_1Z_1^T)\\
\text{Hence } C_{aa} &= \E (\sum_{ijkl} \Pi_{ij}\Pi_{kl} z_i z_j z_k z_l z_a^2) \\
&= 15 \Pi_{aa}^2 + 6\Pi_{aa} (\sum_{t \neq a} \Pi_{tt}) + 12 \sum_{t \neq a} \Pi_{ta}^2 + 3 \sum_{t \neq a} \Pi_{tt}^2 + 2\sum_{s \neq t \neq a} \Pi_{ss}\Pi_{tt} + 4 \sum_{s \neq t \neq a }\Pi^2_{st}
\end{flalign*}
Let us simplify this expression. Notice the following identities:
\begin{flalign*}
2Tr(\Pi^2) &= 2\Pi_{aa}^2 + 4\sum_{t \neq a} \Pi_{ta}^2 + 2\sum_{t \neq a} \Pi_{tt}^2 + 4 \sum_{s \neq t \neq a} \Pi_{st}^2 \\
Tr^2(\Pi) &= \Pi_{aa}^2 + \sum_{t \neq a} \Pi_{tt}^2 + 2 \sum_{t \neq a } \Pi_{tt} \Pi_{aa} + 2\sum_{s\neq t \neq a} \Pi_{ss} \Pi_{tt}\\
8\Pi_{.a}^T \Pi_{.a} &= 8\Pi_{aa}^2 +8 \sum_{t \neq a} \Pi_{ta}^2\\
4Tr(\Pi) \Pi_{aa} &= 4\Pi_{aa}^2 + 4\sum_{t \neq a} \Pi_{tt}\Pi_{aa}\\
\text{Hence, we see that } C_{aa} &= 6 \Pi_{aa}^2 + 4 Tr(\Pi)\Pi_{aa} + 2 (\Pi^2)_{aa} + 2Tr(\Pi^2) + Tr^2(\Pi)\\
\text{Similarly } C_{ab} &= \E (\sum_{ijkl} \Pi_{ij}\Pi_{kl} z_i z_j z_k z_l z_a z_b) \\
&= 8 \sum_{t \neq a \neq b} \Pi_{at} \Pi_{bt} + 4 \sum_{t \neq a \neq b} \Pi_{ab} \Pi_{tt} + 12 \Pi_{aa}\Pi_{ab} + 12 \Pi_{bb} \Pi_{ab}\\
&= 4 \Pi_{ab} Tr(\Pi) + 8(\Pi^2)_{ab}\\
\text{Hence } C &= 8\Pi^2 + 4Tr(\Pi) \Pi + (2Tr(\Pi^2) + Tr^2(\Pi))I \\
\end{flalign*}
Hence 
\begin{flalign} 
\E[XX^T XX^T XX^T] &= 8\Sigma^3 + 4 Tr(\Sigma)\Sigma^2 + 2 Tr(\Sigma^2)\Sigma +Tr^2(\Sigma)\Sigma \label{eq:EXXtXXtXXt} \\
\text{and ~}~  Var(X^T X X^T \delta) &\asymp \delta^T \Sigma^3 \delta  + Tr(\Sigma)\delta^T \Sigma^2 \delta + Tr^2(\Sigma) \delta^T \Sigma \delta \nonumber\\
&\asymp \textcolor{red}{Tr^2(\Sigma) \delta^T \Sigma \delta}. \nonumber
\end{flalign}
Next, let us calculate $Var(Y^T Y Y^T \delta)$. We keep only the higher order terms in the following expansions, to avoid the tediousness of Proposition \ref{prop:ugly}  for clarity.
\begin{flalign*}
\E[YY^T] &= \Sigma + \delta\delta^T\\
\E(Y^T Y Y^T \delta) &= \E[(\Gamma Z_2 + \delta)^T (\Gamma Z_2 +
\delta) (Z_2^T \Gamma^T \delta + \delta^T \delta)]\\
&= \|\delta\|^2 (Tr(\Sigma)+\delta^T\delta) + 2\delta^T \Sigma \delta \\
&\asymp \textcolor{blue}{\|\delta\|^2Tr(\Sigma)}\\
\E [YY^T YY^T ] &= \E[ (\Gamma Z_2 + \delta)(\Gamma Z_2 + \delta)^T(\Gamma Z_2 + \delta)(\Gamma Z_2 + \delta)^T] \\
& \asymp \Gamma B \Gamma^T + \delta\delta^T (\Sigma + \delta\delta^T) + \delta (Tr(\Sigma) + \delta^T\delta) \delta^T + (\Sigma + \delta\delta^T)\delta \delta^T + \|\delta\|^2(\Sigma + \delta \delta^T) \\
& \quad + \E [\delta Z_2^T \Gamma^T \delta Z_2^T \Gamma^T] + \E[\Gamma Z_2 \delta^T \Gamma Z_2 \delta^T] + \|\delta\|^2 \delta\delta^T\\
\E[\delta^T YY^T YY^T \delta] &= 2 \delta^T \Sigma^2 \delta + Tr(\Sigma)\delta^T \Sigma \delta + 5 \|\delta\|^2 \delta^T \Sigma \delta + 5\|\delta\|^6 + \|\delta\|^4 Tr(\Sigma) \\
& \asymp \delta^T \Sigma \delta Tr(\Sigma) + \|\delta\|^4 Tr(\Sigma)\\
\E [\delta^T YY^T YY^T YY^T \delta] &= \delta^T \E[(\Gamma Z_2 + \delta)(\Gamma Z_2 + \delta)^T(\Gamma Z_2 + \delta)(\Gamma Z_2 + \delta)^T(\Gamma Z_2 + \delta)(\Gamma Z_2 + \delta)^T] \delta\\
& \asymp \|\delta\|^2 (\E[\delta^T YY^T YY^T \delta]) + \delta^T \E [\Gamma Z_2 Z_2^T \Gamma^T YY^T YY^T ]\delta \\
& \quad + \E[\delta^T \Gamma Z_2 \delta^T YY^T YY^T] \delta + \|\delta\|^2\E[ Z_2^T \Gamma^T YY^T YY^T] \delta\\
&:= G_1 + G_2 + G_3 + G_4\\
\text{Define } \Phi &:= \Gamma^T \delta \delta^T \Gamma, \text{ and let us expand the 4 terms above.}\\
G_2 = \delta^T \E [\Gamma Z_2 Z_2^T \Gamma^T YY^T YY^T ]\delta &= \textcolor{red}{\delta^T \E [XX^T XX^T XX^T]\delta} + \|\delta\|^2 \delta^T \E[XX^T XX^T]\delta + 3 \|\delta\|^2 \E [Z_2^T \Phi Z_2 Z_2^T \Pi Z_2] \\
& \quad + 2\E [(Z_2^T \Phi Z_2)^2 ] + \E [ Z_2^T \Phi Z_2 ]  \|\delta\|^4  \\
& \asymp \delta^T \Sigma^3 \delta  + Tr(\Sigma)\delta^T \Sigma^2 \delta + \textcolor{red}{ Tr^2(\Sigma) \delta^T \Sigma \delta} + \|\delta\|^2 \delta^T \Sigma^2 \delta + \|\delta\|^2 \delta^T\Sigma \delta Tr(\Sigma) \\
& \quad +  (\delta^T \Sigma \delta )^2 +  \delta^T\Sigma\delta\|\delta\|^4 \\
&\asymp \textcolor{red}{Tr^2(\Sigma) \delta^T \Sigma \delta}\\
G_1  = \|\delta\|^2 (\E[\delta^T YY^T YY^T \delta]) &= \|\delta\|^6  Tr(\Sigma) + \|\delta\|^2 \delta^T \Sigma \delta Tr(\Sigma) \\
&\preceq G_2\\
G_3 = \E[\delta^T \Gamma Z_2 \delta^T YY^T YY^T] \delta
 &=  2\E[Z_2^T \Phi Z_2 Z_2^T \Pi Z_2 ] \|\delta\|^2 + 2\E[(Z_2^T \Phi Z_2)^2] + 4\E[Z_2^T \Phi Z_2]\|\delta\|^4 \\
 &\asymp \|\delta\|^2 \delta^T\Sigma \delta Tr(\Sigma)+ \|\delta\|^2\delta^T \Sigma^2 \delta +  (\delta^T \Sigma \delta )^2 +  \delta^T\Sigma\delta\|\delta\|^4\\
 &\preceq G_2 \\
G_4 =  \|\delta\|^2\E[ Z_2^T \Gamma^T YY^T YY^T] \delta
 &= \|\delta\|^4 \E[(Z_2^T \Pi Z_2)^2 ]  + 3\|\delta\|^2 \E[Z_2^T \Pi Z_2 Z_2^T \Phi Z_2]  \\ 
 & \quad ~ +  \|\delta\|^6 \E[Z_2^T \Pi Z_2 ] + 3\|\delta\|^4 \E[Z_2^T \Phi Z_2]  \\
 &\asymp \|\delta\|^4 Tr^2(\Sigma) + \|\delta\|^2 \delta^T \Sigma \delta Tr(\Sigma) + \|\delta\|^2 \delta^T\Sigma^2 \delta + \|\delta\|^6 Tr(\Sigma) + \|\delta\|^4 \delta^T \Sigma \delta\\
  &\asymp \|\delta\|^4 Tr^2(\Sigma)\\
 \text{Hence } Var(Y^TY Y^T \delta) &= \E[\delta^T Y Y^T Y Y^T Y Y^T \delta] - \E^2[Y^T YY^T \delta] \\
 &\asymp G_1 + G_2 + G_3 + G_4 - \textcolor{blue}{\|\delta\|^4Tr^2(\Sigma)}\\
 &\asymp \textcolor{red}{Tr^2(\Sigma) \delta^T \Sigma \delta}\\
 &\asymp Var(X^T X X^T \delta)
\end{flalign*}
\end{proof}

\begin{lemma}
$$
Var(\E[h_4\vert (X,Y)])  \asymp Tr^2(\Sigma) \delta^T \Sigma \delta
$$
\end{lemma}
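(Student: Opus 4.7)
The plan is to start from the explicit formula for the conditional expectation in Eq.\eqref{eq:Eh4Gxy}, which writes $\E[h_4\vert(X,Y)]$ as a sum of six summands, each separating cleanly into a function of $X$ alone minus a function of $Y$ alone. Since $X \perp Y$ under [A1], the variance of each such difference is the sum of the two individual variances, and by the elementary bound $Var(\sum_{i=1}^6 A_i) \leq 6 \sum_{i=1}^6 Var(A_i)$ we may upper-bound the total variance by summing the six individual variances. The main task is then to compute each of these six variances and compare them.

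First I would identify the two candidate dominant terms. The linear term $-4Tr(\Pi)\delta^T(Y-X)$ has variance $16\,Tr^2(\Sigma) \cdot 2\,\delta^T\Sigma\delta \asymp Tr^2(\Sigma)\,\delta^T\Sigma\delta$, since $Tr(\Pi)=Tr(\Gamma^T\Gamma)=Tr(\Sigma)$ and $Var(\delta^T Y)=Var(\delta^T X)=\delta^T\Sigma\delta$. The cubic term $-4(Y^TYY^T-X^TXX^T)\delta$ has variance $\asymp Tr^2(\Sigma)\,\delta^T\Sigma\delta$ by Proposition~\ref{prop:VarYtYYtD}. Both are of exactly the target order, so the upper bound in the lemma follows once the remaining four terms are shown to be of smaller order.

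Next I would dispatch the four subdominant terms using the moment propositions and Assumption [A4]. Specifically: $Var(\delta^T YY^T \delta) = Var((\delta^T Y)^2) \asymp (\delta^T\Sigma\delta)^2$, which is $o(Tr^2(\Sigma)\,\delta^T\Sigma\delta)$ since $\delta^T\Sigma\delta = o(Tr(\Sigma^2)) \leq o(Tr^2(\Sigma))$; the term $2Y^TY\|\delta\|^2$ has variance $\asymp \|\delta\|^4\,(Tr(\Sigma^2) + \delta^T\Sigma\delta)$, which is $o(Tr^2(\Sigma)\,\delta^T\Sigma\delta)$ after invoking $\|\delta\|^2 = o(\lambda_{\min}(\Sigma) Tr^2(\Sigma)/Tr(\Sigma^2))$ from [A4] together with $\delta^T\Sigma\delta \geq \lambda_{\min}(\Sigma)\|\delta\|^2$; the term $8\delta^T\Sigma(Y-X)$ has variance $\asymp \delta^T\Sigma^3\delta = o(Tr(\Sigma^2)\,Tr^2(\Sigma))$ by [A4], which is comfortably smaller than $Tr^2(\Sigma)\,\delta^T\Sigma\delta$ only if one is careful — here the bound $\delta^T\Sigma^3\delta \leq \lambda^2_{\max}(\Sigma)\,\delta^T\Sigma\delta \leq Tr^2(\Sigma)\,\delta^T\Sigma\delta$ suffices (in fact, one shows the ratio vanishes under [A4]); and finally $\|\delta\|^2\delta^T(Y-X)$ has variance $\asymp \|\delta\|^4\,\delta^T\Sigma\delta$, which is $o(Tr^2(\Sigma)\,\delta^T\Sigma\delta)$ since $\|\delta\|^4=o(Tr^2(\Sigma))$.

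For the matching lower bound ($\gtrsim$), I would argue that the two dominant summands cannot cancel and still contribute at the correct order. The key observation is that $-4Tr(\Pi)\delta^T Y$ and the part of $-4Y^TYY^T\delta$ linear in $Y$ live in different ``polynomial degrees'' in $Y$: decomposing $Y^TYY^T\delta$ into its linear, quadratic, and cubic parts in $Z_2$ (using $Y=\Gamma Z_2+\delta$), the purely-cubic-in-$Z_2$ component is uncorrelated with the linear term $Tr(\Pi)\delta^T Y = Tr(\Pi)\delta^T\Gamma Z_2 + Tr(\Pi)\|\delta\|^2$, so the variance of the sum is at least the variance of the cubic component, which is still $\asymp Tr^2(\Sigma)\,\delta^T\Sigma\delta$ by inspection of the dominant contribution in Proposition~\ref{prop:VarYtYYtD}. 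Combining with the independence of $X$ and $Y$ contributions gives the matching lower bound. The main obstacle I anticipate is bookkeeping: carefully orthogonalizing the linear, quadratic, and cubic components of $Y^TYY^T\delta$ in $Z_2$ so that the lower bound is not lost to a potential cancellation with the $-4Tr(\Pi)\delta^T Y$ term; this is conceptually clean but algebraically tedious, and closely parallels the derivation of Eq.\eqref{eq:EXXtXXtXXt} already carried out in Proposition~\ref{prop:VarYtYYtD}.
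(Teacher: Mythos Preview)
Your upper-bound argument is essentially the paper's own proof, carried out with more care: both start from Eq.~\eqref{eq:Eh4Gxy}, list the variances of the individual summands, and identify $Var\bigl(Tr(\Pi)\,\delta^T(Y-X)\bigr)$ and $Var\bigl(Y^TYY^T\delta\bigr)$ (via Proposition~\ref{prop:VarYtYYtD}) as the two contributions of order $Tr^2(\Sigma)\,\delta^T\Sigma\delta$, with the rest subdominant under [A3],[A4]. That part is fine and matches the paper.

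Your lower-bound argument, however, has a genuine gap. The claim that ``the purely-cubic-in-$Z_2$ component is uncorrelated with the linear term'' is false: with $u=\Gamma^T\delta$ and $\Delta_3=0$ for clarity,
\[
\E\bigl[\,Z_2^T\Pi Z_2\,(u^TZ_2)\cdot(u^TZ_2)\,\bigr]
\;=\; Tr(\Pi)\,u^Tu + 2\,u^T\Pi u
\;=\; Tr(\Sigma)\,\delta^T\Sigma\delta + 2\,\delta^T\Sigma^2\delta,
\]
so the covariance of the raw cubic $Z_2^T\Pi Z_2\,u^TZ_2$ with the raw linear $Tr(\Pi)\,u^TZ_2$ is exactly of order $Tr^2(\Sigma)\,\delta^T\Sigma\delta$ --- the same order as both individual variances. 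Worse, if you had repaired this by passing to a genuine Wick/Hermite decomposition (where different degrees \emph{are} orthogonal), the Wick-degree-3 piece of $Z_2^T\Pi Z_2\,u^TZ_2$ has variance only $\asymp Tr(\Sigma^2)\,\delta^T\Sigma\delta = o\bigl(Tr^2(\Sigma)\,\delta^T\Sigma\delta\bigr)$; the dominant part of $Var(X^TXX^T\delta)$ actually sits in its Wick-degree-1 component $Tr(\Pi)\,u^TZ_1 + 2\,(\Pi u)^TZ_1$, which lives in the same space as the other linear terms and could in principle cancel with them.

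The lower bound is nonetheless true, but for a different reason: in the $X$-part of Eq.~\eqref{eq:Eh4Gxy} the two dominant contributions enter with the \emph{same} sign, $+4\,Tr(\Pi)\,\delta^TX + 4\,X^TXX^T\delta$, and the covariance displayed above is positive, so the variances add rather than cancel (a direct computation gives $Var(4Tr(\Pi)\delta^TX + 4X^TXX^T\delta) = 64\,Tr^2(\Sigma)\,\delta^T\Sigma\delta + o(\cdot)$); the remaining, smaller-variance terms are then harmless by Cauchy--Schwarz. The paper's own proof does not spell this out either --- it simply lists the four variances and asserts dominance --- and since only the upper bound $Var(U_4)\lesssim Tr^2(\Sigma)\,Var(U_{CQ})$ is ever used downstream (Step~(iii)), the gap does not affect the main theorems.
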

\begin{proof}

Returning back to  Eq.\eqref{eq:Eh4Gxy}, the 4 different variance terms involved in $Var(\E [h_4 \vert  (X,Y)])$ are
\begin{eqnarray*}
Var(Y^T \delta \delta^T Y) &=& Var((\Gamma Z_2 + \delta)^T\delta\delta^T(\Gamma Z_2 + \delta)) \asymp  (\delta^T \Sigma \delta)^2 + \|\delta\|^4 \delta^T \Sigma \delta\\
Var(Y^T Y \|\delta\|^2) &\asymp& \|\delta\|^4 Tr(\Sigma^2) \\
Var(Tr(\Pi) \delta^T \Gamma (Z_2 - Z_1)) &\asymp& Tr^2(\Sigma) \delta^T \Sigma \delta\\
Var(Y^T Y Y^T \delta) &\asymp& Tr^2(\Sigma)\delta^T \Sigma \delta
\end{eqnarray*}

Under our assumptions, one can verify that the dominant term of $Var(\E[h_4\vert X,Y])$ is $\asymp Tr^2(\Sigma) \delta^T \Sigma \delta$.
\end{proof}

\begin{lemma}
$$
Var(h_4) \asymp Tr^2(\Sigma)Tr(\Sigma^2)
$$
\end{lemma}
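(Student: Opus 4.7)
The strategy is to use the crude variance inequality $Var\left(\sum_{k=1}^{4} \epsilon_k V_k\right) \le 4 \sum_{k=1}^{4} Var(V_k)$ with $\epsilon_k \in \{\pm 1\}$ and then bound the variance of each of the four fourth-power summands separately. The "pure'' summands $\|X_i - X_j\|^4, \|Y_i - Y_j\|^4$ reduce to a centered quadratic form in an iid vector with identity covariance and finite moments, and fall directly under Proposition \ref{prop:ugly}. The "mixed'' summands $\|X_i - Y_j\|^4, \|X_j - Y_i\|^4$ produce a quadratic form plus a linear term plus a constant, handled by Proposition \ref{prop:uncenteredQ}. Once each variance is shown to be $\asymp Tr^{2}(\Sigma)Tr(\Sigma^2)$, the claim follows.

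\textbf{Pure terms.} Let $Z' := (Z_{1i} - Z_{1j})/\sqrt 2$, which by Assumption [A1, A2] has iid coordinates with mean $0$, variance $1$, and $\geq 8$ bounded moments; the ``deviation-from-Gaussian'' constants $\Delta_3, \Delta_4, \ldots$ are finite. Writing $\Pi = \Gamma^T \Gamma$ we have $\|X_i - X_j\|^2 = 2\, Z'^T \Pi Z'$, so $Var(\|X_i - X_j\|^4) = 16\, Var(Q^2)$ with $Q = Z'^T \Pi Z'$. Proposition \ref{prop:ugly} then gives $Var(Q^2) \asymp Tr(\Pi^2)\, Tr^{2}(\Pi) = Tr(\Sigma^2)\, Tr^{2}(\Sigma)$. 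The same calculation applies verbatim to $\|Y_i - Y_j\|^4$ by symmetry.

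\textbf{Mixed terms.} Write $X_i - Y_j = \Gamma(Z_{1i} - Z_{2j}) - \delta$ and let $Z'' := (Z_{1i} - Z_{2j})/\sqrt 2$, which again has iid zero-mean unit-variance coordinates. Then
\[
\|X_i - Y_j\|^2 \;=\; 2\, Z''^T \Pi Z'' \;-\; 2\sqrt 2\, \delta^T \Gamma Z'' \;+\; \|\delta\|^2 \;=\; Q + a^T Z'' + b,
\]
where $a = -2\sqrt 2\, \Gamma^T \delta$ (so $a^T a = 8\, \delta^T \Sigma \delta$) and $b = \|\delta\|^2$. This is exactly the setup of Proposition \ref{prop:uncenteredQ}, which yields $Var(Q'^2) \asymp Var(Q^2) \asymp Tr^{2}(\Sigma)\, Tr(\Sigma^2)$, provided the $a$- and $b$-induced correction terms are genuinely subdominant. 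The same argument covers $\|X_j - Y_i\|^4$. Summing the four contributions and tracking the dominant term gives the stated order.

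\textbf{Main obstacle.} The one delicate point is verifying the final claim of Proposition \ref{prop:uncenteredQ} in the present context: the correction terms $(a^T a)^2, b^2 Tr(\Pi^2), b^2 Tr(\Pi \circ \Pi), Var(Q\, a^T Z''),$ etc., must all be $o\bigl(Tr^{2}(\Sigma) Tr(\Sigma^2)\bigr)$. This is precisely where Assumption [A4] is needed: $\delta^T \Sigma \delta = o(Tr(\Sigma^2))$ and $\lambda_{\max}^{2}(\Sigma) \le Tr(\Sigma^2)$ give $(a^T a)^2 = o(Tr^{2}(\Sigma) Tr(\Sigma^2))$, while $\|\delta\|^{2} = o(Tr(\Sigma))$ forces $b^2 Tr(\Pi^2) = \|\delta\|^4 Tr(\Sigma^2) = o(Tr^{2}(\Sigma) Tr(\Sigma^2))$. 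The remaining cross term $Var(Q\, a^T Z'')$ can be bounded via Cauchy--Schwarz (or a direct moment expansion analogous to Proposition \ref{prop:zSz}) by $O(Tr^{2}(\Pi^2)\, a^T a)$, which is again subdominant under [A4]. Once this bookkeeping is done, the rest of the proof is mechanical. Since only an upper bound is needed in Step (iii) of the main proof, the $\asymp$ can be downgraded to $O(\cdot)$ if convenient; the lower bound, if desired, follows by specializing to the null $\delta = 0$ where the two pure summands are independent and each has variance of the claimed order, so no cancellation can reduce the total below this order.
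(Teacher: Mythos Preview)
Your approach is correct for the upper bound and is in fact a cleaner route than the paper's. Where the paper first expands each $\|A-B\|^4 = (A^TA + B^TB - 2A^TB)^2$ and regroups $h_4$ into roughly a dozen cross-terms of the form $(X^TX')^2$, $X^TX\cdot X'^TX'$, $X^TXX^TX'$, $Y^TYY^TY'$, etc., and then bounds the variance of each one individually (invoking the sixth-moment computation in Proposition~\ref{prop:VarYtYYtD} and results of Bao--Ullah on products of quadratic forms), you keep the original four fourth-power summands intact and apply Propositions~\ref{prop:ugly} and~\ref{prop:uncenteredQ} directly to each. This buys you a shorter argument at the cost of some granularity: the paper's expansion identifies \emph{which} cross-terms are the dominant $Tr^2(\Sigma)Tr(\Sigma^2)$ contributors (namely $Var(X^TX\cdot X'^TX')$ and the analogous triple-product terms) and which are strictly smaller (e.g.\ $Var((X^TX')^2) \asymp Tr(\Sigma^4)$), information that is not needed here but could matter for sharper results.

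One caveat: your sketch of the lower bound is not quite an argument. Independence of $\|X-X'\|^4$ and $\|Y-Y'\|^4$ under the null does not by itself preclude cancellation, because the mixed terms $-\|X-Y'\|^4$ and $-\|X'-Y\|^4$ are correlated with the pure terms through the shared $X,X',Y,Y'$. The paper's proof is equally loose on this point (it also only sums variances and asserts $\asymp$), and as you correctly note, only the upper bound is used in Step~(iii), so this does not affect the theorem. If you wanted a genuine lower bound you would need to control the cross-covariances or compute $Var(h_4)$ exactly under the null via the paper's expansion.
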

\begin{proof}
\begin{eqnarray}
h_4 &=& 4 [ (X^T X')^2 + (Y^T Y')^2 - (X^TY')^2 - (X'^T Y)^2 ] \nonumber \\
&+& 2 [ X^TX (X'^T X' - Y'^T Y') + Y^TY(Y'^T Y' - X'^T X') ] \nonumber\\
&+& 4 [ Y'^T Y' Y'^T (X - Y) + X'^T X' X'^T(Y-X) + X^T X X^T(Y'-X') + Y^T Y Y^T (X'-Y')] \nonumber \\
\label{eq:h4expansion}
\end{eqnarray}
For example, let us calculate $Var((X^T X')^2)$. Defining $S' = X'X'^T$, we have
\begin{flalign*}
\E[(X^T X')^4] &= \E_{X'}  \E_{X}[(X^T S' X)^2] = \E_{X'} \E_{Z_1} [(Z_1^T \Gamma^T S' \Gamma Z_1)^2]\\
 &= \E_{X'} [Tr(\Gamma^T X' X'^T \Gamma\Gamma^T X' X'^T \Gamma)  + Tr^2(\Gamma^T X' X'^T \Gamma)]\\
 &= \E_{X'} [ (X'^T \Sigma X')^2 + (X'^T \Sigma X')^2 ]\\
 &= \E_{X'} [ (Z_1'^T \Pi^2 Z_1')^2 + (Z_1'^T \Pi^2 Z_1')^2 ]\\
 &= 2Tr(\Pi^4) + Tr^2(\Pi^2) \\
 \E[(X^T X')^2] &= \E_{X'} \E_{X}[ Z_1^T \Gamma^T S' \Gamma Z_1 ] = \E_{X'} Tr(\Gamma^T X'X'^T \Gamma) = \E_{Z_1'} Z_1'^T \Pi^2 Z_1'\\
 &= Tr(\Pi^2)\\
 Var((X^TX')^2) &= \E[(X^T X')^4] - \E[(X^T X')^2]^2 = Tr(\Pi^4) = Tr(\Sigma^4) = o(Tr^2(\Sigma)Tr(\Sigma^2)) 
\end{flalign*}

Similarly, let us calculate $Var(X'^T X' X^T X) $ and $Var(Y'^T Y' Y^T Y)$ as follows.
\begin{eqnarray*}
Var(X'^T X' X^T X) &=& \E[ (X^T X)^2 (X'^T X')^2 ] - \E^2[X^T X X'^T X'] \\
&=& \E^2[(X^TX)^2] - \E^4[X^T X] \asymp (8Tr(\Sigma^2) + 4Tr^2(\Sigma))^2 - (2Tr(\Sigma))^4\\
&\asymp& Tr(\Sigma^2) Tr^2(\Sigma)\\
\text{and } Var(Y'^T Y' Y^T Y) &=& \E^2[(Y^T Y)^2] - \E^4(Y^T Y)\\
 &=& (Tr^2(\Sigma) + 2Tr(\Sigma^2) + 4 \delta^T \Sigma \delta + \delta^T \delta  \\
&& \quad ~ + 8\Delta_3 diag(\Pi)\delta^T \Sigma \delta + 2\delta^T \delta Tr(\Sigma))^2 - (Tr(\Sigma) + \delta^T \delta)^4\\
 &\asymp& Tr^2(\Sigma)Tr(\Sigma^2)
\end{eqnarray*}
where we use Proposition \ref{prop:uncenteredQ} and the last step follows by larger terms canceling after direct expansion.

Next, let us bound $Var(X^T X X^T X')$ and $Var(Y^T Y Y^T Y')$ as follows (other terms are similar).
Multiplying Eq.\eqref{eq:EXXtXXtXXt} by $\Sigma$, we see that
$$
\E[XX^T XX^T XX^T \Sigma]  = 8\Sigma^4 + 4 Tr(\Sigma)\Sigma^3 + 2 Tr(\Sigma^2)\Sigma^2 +Tr^2(\Sigma)\Sigma^2 .
$$
Now taking traces on both sides, and applying trace rotation to the left, we see that the dominant term is
$$
Tr(\E[XX^T XX^T XX^T \Sigma]) = \E[Tr(X^T XX^T XX^T \Sigma X)] = \E[(X^T X)^2 X^T \Sigma X] \asymp Tr(\Sigma^2)Tr^2(\Sigma).
$$
Since $Var(P) \leq \E [P^2]$, we conclude that
$$
Var(X^T X X^T X') \leq \E[X^T X X^T (X'X'^T) X X^T X] = \E[X^T\Sigma X (X^TX)^2] \asymp Tr^2(\Sigma)Tr(\Sigma^2).
$$
Then, taking expectations with respect to $Y'$ first, we get 
\begin{flalign*}
Var(Y^T Y Y^T Y') &= \E[Y^T (\Sigma + \delta \delta^T) Y Y^T Y Y^T Y] - \E^2[Y^T Y Y^T\delta]\\
&= \E [Y^T \Sigma Y (Y^T Y)^2] + Var(Y^T Y Y^T \delta)\\
&\asymp \E[ Z_Y^T \Sigma^2 Z_Y (Z_Y^T \Sigma Z_Y)^2 ] + Tr^2(\Sigma) \delta^T \Sigma \delta\\
&\asymp (\delta^T\Sigma\delta)^2 \delta^T \Sigma^2 \delta + 4(\delta^T \Sigma^2 \delta)^2 + 8 (\delta^T\Sigma \delta )(\delta^T\Sigma^3 \delta) + 8 \delta^T \Sigma^3 \delta \\
&\quad  + 4Tr(\Sigma^2)[\delta^T\Sigma^2\delta + (\delta^T\Sigma \delta)^2] + 8Tr(\Sigma)[\delta^T \Sigma^3 \delta + (\delta^T \Sigma^2 \delta)(\delta^T \Sigma \delta)]\\
&\quad + 3Tr(\Sigma^2)\delta^T \Sigma^2 \delta + 6Tr(\Sigma)\delta^T \Sigma^3 \delta + Tr^2(\Sigma)Tr(\Sigma^2) \\
&\quad + 4 Tr(\Sigma^3)Tr(\Sigma) + 2Tr^2(\Sigma^2) + 8Tr(\Sigma^4)\\
&\asymp Tr^2(\Sigma)Tr(\Sigma^2).
\end{flalign*}
The above results are obtained in a fashion similar to Proposition \ref{prop:uncenteredQ} for variance of uncentered quadratic forms, or Proposition \ref{prop:VarYtYYtD} for $Var(Y^T YY^T \delta)$, or from the results of \cite{baoullah10} about momnents of products of non-normal quadratic forms (Pg. 255 of \cite{ullah} for the Gaussian case).
Hence, bounding the $Var(h_4)$ by (a constant times) the sum of variances of the terms in the expansion Eq.\eqref{eq:h4expansion}, we see that
$$
Var(h_4) \asymp Tr^2(\Sigma)Tr(\Sigma^2)
$$
as required, concluding the proof of the lemma.

\end{proof}

In summary, using Eq.\eqref{eq:serfling-variance}, we have the variance of $U_4$ as
$$
Var(U_4) \leq C_1 \frac{Tr(\Sigma^2)Tr^2(\Sigma)}{n^2} + C_2 \frac{Tr^2(\Sigma)\delta^T \Sigma \delta }{n} \leq C Tr^2(\Sigma) Var(U_{CQ})
$$
for some absolute constants $C_1,C_2,C=\max\{C_1,C_2\}$.

Since $\gamma^2 = \omega(Tr(\Sigma))$, we see that 
$$Var(U_4/\gamma^4) = o(Var(U_{CQ}/\gamma^2))$$
as required for step (iii).

\begin{remark}
 Recall that it is typically stated in textbooks like \cite{serfling}, that for degenerate U-statistics, the variance under the null is $O(1/n^2)$, and variance under the alternative is $O(1/n)$. While this is true asymptotically when $n \to \infty$ in the fixed $d$ setting, the variance under the alternative can still be $O(1/n^2)$ in the high-dimensional setting, depending on the signal to noise ratio and dimension when $d,n \to \infty$.
\end{remark}

The conclusion of step (iii) also concludes the proof of Theorem \ref{thm:gmmd}.

\subsection{Proof of Theorem \ref{thm:eed}}

The only difference from the above proof, is that instead of taking the Taylor expansion of the Gaussian kernel, we take the expansion of the (modified) Euclidean distance. This gives rise to the exact same set of terms to bound, with different constants. Indeed,
when $\gamma^2 = \omega(Tr(\Sigma))$, by the exact form of Taylor's theorem for $f(\cdot) = (1+ \cdot)^{1/2}$  at $a = \frac{\|S_i - S_j\|^2}{\gamma^2-2Tr(\Sigma)}$ around $\tau = \frac{2Tr(\Sigma)}{\gamma^2 - 2Tr(\Sigma)} = o(1)$,
\begin{equation}\label{eq:eedtaylor}
f(a) = f(\tau) + \frac{(a-\tau)}{2(1+\tau)^{1/2}} - \frac{(a-\tau)^2}{8(1+\tau)^{3/2}}  + \frac{3(a-\tau)^3}{48} (1+ \zeta)^{-5/2}
\end{equation}
for some $\zeta$ between $a$ and $\tau$. Comparing Eq.\eqref{eq:eedtaylor} with Eq.\eqref{eq:gmmdtaylor}, we see that all the terms are exactly the same, except for constants. Hence, exactly the same proof of Theorem \ref{thm:gmmd} goes through for Theorem \ref{thm:eed} as well.

\subsection*{Acknowledgments}

This project was supported by the grant NSF IIS-1247658.

\bibliography{mmd}
\bibliographystyle{plainnat}

\appendix

\section{An error in \cite{cq} : the power for high SNR}\label{appsec:cq}
We briefly describe an error in \cite{cq}, that has a few important repercussions. All notations, equation numbers and theorems in this paragraph refer to those in \cite{cq}. Using the test statistic $T_n/\hat \sigma_{n1}$ defined below Theorem 2 in \cite{cq}, we can derive the power under their assumption (3.5) as

\begin{eqnarray*}
&&P_1 \left( \frac{T_n}{\hat \sigma_{n1}} > \xi_\alpha \right) =\\
&=&P_1 \left( \frac{T_n - \|\mu_1 - \mu_2\|^2}{\hat \sigma_{n2}} > \frac{\hat \sigma_{n1}}{\hat \sigma_{n2}} \xi_\alpha -\frac{ \|\mu_1 - \mu_2\|^2}{\hat \sigma_{n2}} \right)\\
& \rightarrow& \Phi \left( \frac{ \|\mu_1 - \mu_2\|^2}{\hat \sigma_{n2}} \right) \mbox{ (the denominator is \textit{not} $\hat \sigma_{n1}$)}\\
 &=&\Phi \left( \frac{\sqrt{n} \|\mu_1-\mu_2\|^2}{\sqrt{(\mu_1 - \mu_2)^T \Sigma (\mu_1 - \mu_2)}} \right)
 \end{eqnarray*}
which should be the expression for power that they derive in Eq.(3.12), the most important difference being the presence of $\sqrt n$ instead of $n$ in the numerator. 

%
%
%
%

\end{document}